\definecolor{ududff}{rgb}{0.30196078431372547,0.30196078431372547,1}
\newtheorem{theorem}{Theorem}[section]
\newtheorem{cor}[theorem]{Corollary}
\newtheorem{definition}[theorem]{Definition}
\newtheorem{notation}[theorem]{Notation}
\newtheorem{prop}[theorem]{Proposition}
\newtheorem{lemma}[theorem]{Lemma}
\newtheorem{remark}[theorem]{Remark}
\newcommand{\npmatrix}[1]{\left [ \begin{matrix} #1 \end{matrix} \right]}
\newcommand{\pt}[1]{\langle #1 \rangle}
\newcommand{\cV}{\mathcal{V}}
\newcommand{\cC}{\mathcal{C}}
\newcommand{\cP}{\mathcal{P}}
\newcommand{\cE}{\mathcal{E}}
\newcommand{\cI}{\mathcal{I}}
\newcommand{\bF}{\mathbb{F}}
\newcommand{\cZ}{\mathcal{Z}}
\newcommand{\cN}{\mathcal{N}}
\newcommand{\cH}{\mathcal{H}}
\newcommand{\GL}{\mathrm{GL}}
\newcommand{\PGL}{\mathrm{PGL}}
\newcommand{\PG}{\mathrm{PG}}
\renewcommand\le{\leqslant}
\renewcommand\ge{\geqslant}
\title{Nets of conics of rank one in $\PG(2,q)$, $q$ odd\\}
\author{Michel Lavrauw, Tomasz Popiel, John Sheekey}
\address{Michel Lavrauw, Sabanc{\i} University, Istanbul, Turkey; Email: \textsc{mlavrauw@sabanciuniv.edu}}
\address{Tomasz Popiel, Queen Mary University of London, UK; Email: \textsc{t.popiel@qmul.ac.uk}}
\address{John Sheekey, University College Dublin, Ireland; Email: \textsc{john.sheekey@ucd.ie}}
\date{\today}
\begin{document}

\begin{abstract}
We classify nets of conics of rank one in Desarguesian projective planes over finite fields of odd order, namely, two-dimensional linear systems of conics containing a repeated line. 
Our proof is geometric in the sense that we solve the equivalent problem of classifying the orbits of planes in $\PG(5,q)$ which meet the quadric Veronesean in at least one point, under the action of $\PGL(3,q) \le \PGL(6,q)$ (for $q$ odd). 
Our results complete a partial classification of nets of conics of rank one obtained by A.~H.~Wilson in the article ``The canonical types of nets of modular conics'', {\em American Journal of Mathematics} {\bf 36} (1914) 187--210.
\end{abstract}

\maketitle

\section{Introduction}
The space of forms of degree $d$ on an $n$-dimensional projective space $\PG(V)$ comprise a vector space $W$ of dimension ${n+d}\choose{d}$. 
Subspaces of the projective space $\PG(W)$ are called {\em linear systems of hypersurfaces of degree $d$}. 
The problem of classifying linear systems consists of determining the orbits of such subspaces under the induced action of the projectivity group $\PGL(V)$ on $\PG(W)$. 
One-dimensional linear systems are called a {\em pencils} and two-dimensional linear systems are called {\em nets}. 
In this paper we are concerned with linear systems of conics, namely the case $d=n=2$. 
Pencils of conics over $\mathbb{C}$ and $\mathbb{R}$ were classified by Jordan~\cite{Jordan1906, Jordan1907} in 1906--1907, and nets of conics over these fields were treated by Wall~\cite{Wall1977}. 
For an elementary exposition of the more general case $d=2$ (namely, pencils of quadrics), we refer the reader to chapters 9 and 11 of \cite{Casas-Alvero2010}.

Here we are concerned with linear systems of conics over {\em finite} fields. 
Compared with working over $\mathbb{C}$, complications arise when working over a finite field $\mathbb{F}_q$ (of order $q$) because $\mathbb{F}_q$ is not algebraically closed, resulting in a number of extra orbits which sometimes turn out to be difficult to classify. 
Pencils of conics over $\mathbb{F}_q$, $q$ odd, were classified by Dickson~\cite{Dickson1908}; an incomplete classification for $q$ even was obtained by Campbell~\cite{Campbell1927}. 
Our aim is to classify nets of conics over $\mathbb{F}_q$, $q$ odd.
This problem was addressed by Wilson~\cite{Wilson1914} using a purely algebraic approach, where the (in)equivalence of nets is studied by means of explicit coordinate transformations. 
However, as explained in Section~\ref{sec:Wilson}, Wilson's classification was incomplete. 
We take a geometric approach, based on the observation that linear systems of conics correspond to subspaces of $\PG(5,q)$. 
We classify the nets of {\em rank one}, namely those containing a repeated line, which correspond to planes in $\PG(5,q)$ intersecting the quadric Veronesean in at least one point. 
Our main result is Theorem~\ref{mainthm}. 
Here points of $\PG(5,q)$ are represented by symmetric $3 \times 3$ matrices, with the quadric Veronesean $\mathcal{V}(\mathbb{F}_q)$ defined by setting all $2 \times 2$ minors equal to zero (see Section~\ref{sec:prelims}).

\begin{theorem} \label{mainthm}
Let $q$ be a power of an odd prime. 
There are 15 orbits of planes in $\operatorname{PG}(5,q)$ that meet the quadric Veronesean in at least one point, under the action of $\PGL(3,q) \le \PGL(6,q)$ defined in Section~\ref{ss:PGL3q-action}. 
Representatives of these orbits are listed in Table~\ref{table:main}.
\end{theorem}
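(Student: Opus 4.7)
My plan has three stages: (i) reduce to planes through a fixed rank-$1$ point; (ii) stratify such planes by the geometric type of their discriminant cubic; and (iii) resolve each stratum into $\PGL(3,q)$-orbits via normal forms. For (i), since $\PGL(3,q)$ acts transitively on $\cV(\bF_q)$ (the Veronese map is $\PGL(3,q)$-equivariant and $\PGL(3,q)$ is transitive on points of $\PG(2,q)$), any plane meeting $\cV$ can be conjugated to contain the fixed point $P_0 := \langle E_{11}\rangle$. It then suffices to classify planes through $P_0$ under the stabiliser $H := \mathrm{Stab}_{\PGL(3,q)}(P_0)$, the parabolic subgroup whose representatives have first column proportional to $e_1$. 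Any such plane can be written $\pi = \langle E_{11}, M_1, M_2\rangle$ after clearing the $(1,1)$-entries of $M_1, M_2$ using $E_{11}$; putting each $M_i$ in block form
\[
M_i = \begin{pmatrix} 0 & b_i^{\mathrm T} \\ b_i & C_i \end{pmatrix},
\]
the problem reduces to classifying pairs $(M_1, M_2)$ under the combined $H$- and $\GL(2,q)$-actions (the latter from change of basis inside the span).

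\textbf{Stratification by the discriminant cubic.} For step (ii), I exploit the $\PGL(3,q)$-invariant discriminant $D(\pi) := \{M \in \pi : \det M = 0\}$, a plane cubic (or the whole plane) whose singular locus equals $\pi \cap \cV$. I enumerate cases by the geometric type of $D(\pi)$: either $D(\pi) \equiv 0$ (every conic in the net is singular), or $D(\pi)$ is a cubic curve of one of the standard types---triple line; double line plus a distinct line; three concurrent distinct lines; a triangle of distinct lines; a smooth conic plus a line; an irreducible nodal cubic; or an irreducible cuspidal cubic. Within each case, the $H$-action reduces $(M_1, M_2)$ to a short list of candidate normal forms, which I then separate into genuine $\PGL(3,q)$-orbits using finer invariants: the number of $\bF_q$-rational singular points of $D(\pi)$, the tangent cones of $D(\pi)$ at those points, the behaviour of the net on the lines through those singular points, and arithmetic data such as whether certain scalar parameters lie in $(\bF_q^*)^2$.

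\textbf{Main obstacle.} The principal technical difficulty will be verifying that the candidate normal forms within each stratum are pairwise inequivalent. Two candidates typically differ only by a scalar, and the question reduces to whether that scalar is a square in $\bF_q^*$; I will need to trace carefully how squareness is affected by the $H$- and $\GL(2,q)$-actions. The stratum $D(\pi) \equiv 0$ must also be subdivided further by the structure of the common linear factors or common singular points of the conics in $\pi$, requiring a separate linear-algebra argument. Finally, I expect the cuspidal-cubic stratum to demand special care in characteristic $3$, where the standard invariants of a cuspidal cubic degenerate. Assembling the $15$ representatives of Table~\ref{table:main} will then amount to checking that each candidate falls into exactly one stratum and exactly one orbit within it.
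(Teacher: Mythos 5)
Your high-level strategy overlaps substantially with the paper's: the discriminant cubic $D(\pi)$ is indeed the central invariant used throughout, and the reduction to a fixed rank-$1$ point via transitivity of $\PGL(3,q)$ on $\cV(\bF_q)$ is sound. However, the paper's primary stratification is by the number of rank-$1$ points in $\pi$ (three or more, exactly two, exactly one and spanned by points of rank $\le 2$, exactly one and not so spanned), combined with the configuration of the conic planes $\langle\cC_y\rangle$, $\langle\cC_z\rangle$ attached to the spanning rank-$2$ points and the previously known classification of \emph{line} orbits; the cubic's geometric type then appears as a downstream invariant. Your proposal inverts this, and two genuine gaps result.

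First, your claim that the singular locus of $D(\pi)$ equals $\pi\cap\cV$ is false: rank-$1$ points are always singular points of the cubic, but the converse fails. For the orbit $\Sigma_8$ the cubic is $\alpha\gamma^2=0$, whose singular locus is the entire double line $\gamma=0$, while that line contains only one point of $\cV(\bF_q)$; similar failures occur whenever the cubic is non-reduced. So your stratification does not directly control the Veronesean intersection, which is the quantity the theorem is about, and your case list of cubic types is also incomplete over $\bF_q$ (e.g.\ a line plus a conic can meet in $0$, $1$ or $2$ rational points, and components can be conjugate over an extension). Second, and more seriously, the finer invariants you propose (tangent cones, square classes of parameters) do not suffice to resolve the hardest stratum. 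The planes whose cubic is irreducible with a node at the rank-$1$ point form a one-parameter family $\pi_c$, and deciding whether $\pi_c$ lies in $\Sigma_{12}$, $\Sigma_{13}$ or the new orbit $\Sigma_{14}$ hinges on whether a certain auxiliary cubic has a root in $\bF_q$ --- by Dickson's criterion this is governed by whether $\tfrac{\sqrt{c}+1}{\sqrt{c}-1}$ is a \emph{cube} in $\bF_q(\sqrt{-3})$, not by any square class. Proving that all the exceptional $\pi_c$ form a single orbit then requires explicit equivalences obtained by a resultant computation, and the whole picture changes in characteristic $3$ (where $\Sigma_{14}$ disappears and a different orbit $\Sigma_{14}'$ appears, with the same point-orbit distribution as $\Sigma_{11}$ and $\Sigma_{15}$, so that distribution alone cannot separate them). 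Your plan as written would stall precisely at this stratum; you would need to anticipate cubic-residue conditions and an existence argument showing the exceptional parameter set is nonempty for every $q\not\equiv 0\pmod 3$.
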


As a corollary, we complete Wilson's classification of nets of rank one, rectifying some of the statements made in his paper (see Section~\ref{sec:Wilson} for the details).

\begin{cor} \label{maincor}
There are 15 orbits of nets of conics of rank one in $\operatorname{PG}(2,q)$, $q$ odd.
\end{cor}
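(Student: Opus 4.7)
The plan is to derive Corollary~\ref{maincor} as an immediate consequence of Theorem~\ref{mainthm} via the Veronese correspondence described in the introduction. First, I would recall that a conic in $\PG(2,q)$ is the zero locus of a non-zero ternary quadratic form, and that the space of such forms has dimension $6$. Working in $\PG(5,q)$ and representing points by symmetric $3\times 3$ matrices (up to scalar), a net of conics is by definition a $2$-dimensional linear subspace of the space of conics, that is, a plane in $\PG(5,q)$. Conics of rank one are repeated lines, which correspond to rank-one symmetric matrices, i.e.\ precisely the points of the quadric Veronesean $\cV(\bF_q)$. Consequently, a net contains a repeated line if and only if the corresponding plane in $\PG(5,q)$ meets $\cV(\bF_q)$ in at least one point.

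Next, I would check that the two notions of equivalence match up. The $\PGL(3,q)$-action on $\PG(2,q)$ induces an action on the space of ternary quadratic forms which, in the matrix model, is precisely the embedding $\PGL(3,q)\le \PGL(6,q)$ set up in Section~\ref{ss:PGL3q-action}. Since the correspondence between nets of conics and planes in $\PG(5,q)$ is bijective and $\PGL(3,q)$-equivariant, and the Veronesean is preserved by this action, the orbits of rank-one nets under $\PGL(3,q)$ on $\PG(2,q)$ are in bijection with the orbits of planes meeting $\cV(\bF_q)$ in at least one point under the induced action on $\PG(5,q)$.

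Therefore, Corollary~\ref{maincor} follows directly from Theorem~\ref{mainthm}: each of the $15$ orbit representatives in Table~\ref{table:main} can be read as a plane of symmetric matrices and translated back into a representative net of conics in $\PG(2,q)$. There is essentially no independent obstacle to overcome here, since the only real work is the proof of Theorem~\ref{mainthm} itself; once that theorem is established, the corollary is a tautological consequence of the Veronese dictionary between rank-one nets and planes meeting $\cV(\bF_q)$.
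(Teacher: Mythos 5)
Your proposal is correct and follows essentially the same route as the paper: the authors also deduce Corollary~\ref{maincor} directly from Theorem~\ref{mainthm} via Proposition~\ref{prop:nets_vs_planes}, using the fact that under the symmetric-matrix model a net of conics is a plane in $\PG(5,q)$, a repeated line is a rank-one point, and the $\PGL(3,q)$-actions correspond. The only real content is Theorem~\ref{mainthm} itself, exactly as you observe.
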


Our geometric approach provides insight which may be of use for other classification problems and is expected to have further applications in finite geometry. 
In particular, we are able to deduce further details about the plane orbits, such as the point-orbit distributions (see Definition~\ref{def:dist} and Table~\ref{table:pt-orbit-dist}), which serve as important invariants.
Data of this kind previously obtained by the first and second authors \cite{LaPo2017} for {\em lines} in $\operatorname{PG}(5,q)$ are used in the proof of Theorem~\ref{mainthm}. 

The paper is organised as follows.
In Section~\ref{sec:prelims} we give the necessary preliminaries for the proof of Theorem~\ref{mainthm}, in order to make the paper reasonably self-contained. The problem of classifying nets of conics in the classical projective plane $\PG(2,q)$, $q$ odd, is turned into the problem of classifying orbits of planes in 5-dimensional projective space under the action of a copy of the projectivity group $\PGL(3,q)$ viewed as a subgroup of $\PGL(6,q)$. The classification of points, lines, solids and hyperplanes in $\PG(5,q)$ is recalled in Section~\ref{sec:pts_lines_sols_hyps}.
Section~\ref{sec:planes} introduces some terminology, notation and lemmas used throughout the paper. 
The classification of planes in $\PG(5,q)$ (Theorem~\ref{mainthm}) is then proved in Sections~\ref{sec:three_rank_one_points}--\ref{sec:final}, with the proof divided into several parts.
In Section~\ref{sec:three_rank_one_points} we classify the planes spanned by three points of the quadric Veronesean, giving just two orbits, labelled $\Sigma_1$ and $\Sigma_2$. 
In Section~\ref{sec:two_rank_one_points} we classify the planes meeting the quadric Veronesean in exactly two points. There are three such orbits: $\Sigma_3$, $\Sigma_4$ and $\Sigma_5$.
The bulk of the classification is done in Section \ref{sec:one_rank_one_point_and_spanned}, which deals with the planes that are spanned by points of the secant variety of the quadric Veronesean and meet the quadric Veronesean in exactly one point. 
This leads to: eight further orbits $\Sigma_6, \ldots,\Sigma_{13}$ whose existence is independent of the characteristic of the field (as long as it is odd), one orbit $\Sigma_{14}$ which only appears in characteristic $\neq 3$, and one orbit $\Sigma_{14}'$ which only appears in characteristic~$3$.
In Section~\ref{sec:final}, we show that there is exactly one remaining orbit, $\Sigma_{15}$, consisting of planes that meet the Veronesean but are not spanned by points in the secant variety of the Veronesean.
Finally, in Section \ref{sec:Wilson}, we compare our classification with that of Wilson~\cite{Wilson1914} and deduce Corollary~\ref{maincor}.

\begin{table}[!t]
\begin{tabular}{lll}
\toprule
Orbit & Representative & Conditions \\
\midrule
$\Sigma_1$ & $\left[ \begin{matrix} \alpha & \gamma & \cdot \\ \gamma & \beta & \cdot \\ \cdot & \cdot & \cdot \end{matrix} \right]$ & \\
\addlinespace[2pt]
$\Sigma_2$ & $\left[ \begin{matrix} \alpha & \cdot & \cdot \\ \cdot & \beta & \cdot \\ \cdot & \cdot & \gamma \end{matrix} \right]$ & \\
\addlinespace[2pt]
$\Sigma_3$ & $\left[ \begin{matrix} \alpha & \cdot & \gamma \\ \cdot & \beta & \cdot \\ \gamma & \cdot & \cdot \end{matrix} \right]$& \\
\addlinespace[2pt]
$\Sigma_4$ & $\left[ \begin{matrix} \alpha & \cdot & \gamma \\ \cdot & \beta & \gamma \\ \gamma & \gamma & \cdot \end{matrix} \right]$ & \\
\addlinespace[2pt]
$\Sigma_5$ & $\left[ \begin{matrix} \alpha & \cdot & \gamma \\ \cdot & \beta & \gamma \\ \gamma & \gamma & \gamma \end{matrix} \right]$ & \\
\addlinespace[2pt]
$\Sigma_6$ & $\left[ \begin{matrix} \alpha & \beta & \cdot \\ \beta & \varepsilon\alpha & \cdot \\ \cdot & \cdot & \gamma \end{matrix} \right]$ 
& $\varepsilon \in \boxtimes$ \\
\addlinespace[2pt]
$\Sigma_7$ & $\left[ \begin{matrix} \alpha & \beta & \gamma \\ \beta & \cdot & \cdot \\ \gamma & \cdot & \cdot \end{matrix} \right]$ & \\
\addlinespace[2pt]
$\Sigma_8$ & $\left[ \begin{matrix} \alpha & \beta & \cdot \\ \beta & \cdot & \gamma \\ \cdot & \gamma & \cdot \end{matrix} \right]$ & \\
\addlinespace[2pt]
\bottomrule
\end{tabular}
\begin{tabular}{lll}
\toprule
Orbit & Representative & Conditions \\
\midrule
$\Sigma_9$ & $\left[ \begin{matrix} \alpha & \beta & \cdot \\ \beta & \gamma & \cdot \\ \cdot & \cdot & -\gamma \end{matrix} \right]$ & \\
\addlinespace[2pt]
$\Sigma_{10}$ & $\left[ \begin{matrix} \alpha & \beta & \cdot \\ \beta & \gamma & \cdot \\ \cdot & \cdot & -\varepsilon\gamma \end{matrix} \right]$ & $\varepsilon \in \boxtimes$ \\
\addlinespace[2pt]
$\Sigma_{11}$ & $\left[ \begin{matrix} \cdot & \beta & \gamma \\ \beta & \alpha & \alpha \\ \gamma & \alpha & \alpha+\gamma \end{matrix} \right]$ & \\
\addlinespace[2pt]
$\Sigma_{12}$ & $\left[ \begin{matrix} \alpha & \beta & \cdot \\ \beta & \gamma & \beta \\ \cdot & \beta & \gamma \end{matrix} \right]$ & \\
\addlinespace[2pt]
$\Sigma_{13}$ & $\left[ \begin{matrix} \alpha & \beta & \cdot \\ \beta & \gamma & \beta \\ \cdot & \beta & \varepsilon\gamma \end{matrix} \right]$
& $\varepsilon \in \boxtimes$ \\
\addlinespace[2pt]
$\Sigma_{14}$ & $\left[ \begin{matrix} \alpha& \beta & \cdot \\ \beta & c\gamma & \beta-\gamma \\ \cdot & \beta-\gamma & \gamma \end{matrix} \right]$ 
& $q \not \equiv 0 \pmod 3$, ($\dagger$)
\\ 
\addlinespace[2pt]
$\Sigma_{14}'$ & $\left[ \begin{matrix} \alpha+\gamma & \gamma & \gamma \\ \gamma & \beta+\gamma & \gamma \\ \gamma & \gamma & -\beta \end{matrix} \right]$ 
& $q \equiv 0 \pmod 3$ \\
\addlinespace[2pt]
$\Sigma_{15}$ & $\left[ \begin{matrix} \alpha & \beta & \gamma \\ \beta & \gamma & \cdot \\ \gamma & \cdot & \cdot \end{matrix} \right]$ & \\
\addlinespace[2pt]
\bottomrule
\end{tabular}
\caption{Matrix representatives of the 15 orbits of planes in $\PG(5,q)$, $q$ odd, meeting the quadric Veronesean in at least one point, under the action of $\PGL(3,q) \le \PGL(6,q)$ defined in Section~\ref{ss:PGL3q-action}. 
Here $\cdot$ denotes $0$, $(\alpha,\beta,\gamma)$ ranges over all non-zero values in $\mathbb{F}_q^3$, and $\boxtimes$ is the set of non-squares in $\mathbb{F}_q$. 
In orbit $\Sigma_{14}$, condition~($\dagger$) is: $c \in \bF_q \setminus \{0,1\}$, $-3c \in \Box$ and $\tfrac{\sqrt{c}+1}{\sqrt{c}-1}$ is a not a cube in $\mathbb{F}_q(\sqrt{-3})$, where $\Box$ is the set of squares in $\mathbb{F}_q$.
}
\label{table:main}
\end{table}

\section{Preliminaries}\label{sec:prelims}
In this section we review some of the theory used in the proof of Theorem~\ref{mainthm}. Most of it is well known and can be extracted from standard textbooks on projective and algebraic geometry, for example \cite{Casas-Alvero2010} and \cite{Harris}. Some parts are from \cite{LaPo2017}. 
For a survey of properties of Veronese varieties over fields with non-zero characteristic, we refer the reader to \cite{Havlicek}.

By $\bF_q$ we denote the finite field of order $q$, and we assume throughout that $q$ is odd. 
A {\em form} on a vector space $V$ (or the projective space $\PG(V)$) is a homogeneous polynomial in the polynomial ring over the coefficient field of $V$ in $\dim V$ variables. The zero locus in $\PG(V)$ of a form $f$ on $\PG(V)$ is denoted by $\cZ(f)$.

\subsection{Nets of conics}
A ternary quadratic form $f$ on $\bF_q^3$ defines a {\em conic} $\cC=\cZ(f)$ in $\PG(2,q)$. 
Each two distinct conics $\cZ(f_1)$, $\cZ(f_2)$ define the {\em pencil of conics} 
\[\{\cZ(af_1+bf_2):a,b\in \bF_q, (a,b)\neq (0,0)\}.\]
Similarly, a {\em net of conics} $\cN$ is defined by three conics $\cC_i=\cZ(f_i)$ ($i=1,2,3$) in $\PG(2,q)$, not contained in a pencil:
\[
\cN=\{\cZ(af_1+bf_2+cf_3):a,b,c\in \bF_q, (a,b,c)\neq (0,0,0)\}.
\]
Given such a net of conics, one can consider
\begin{equation} \label{eqn:qform}
xf_1+yf_2+zf_3=a_{00}(x,y,z)X_0^2+a_{01}(x,y,z)X_0X_1+\dots + a_{22}(x,y,z)X_2^2
\end{equation}
as a quadratic form whose coefficients are linear forms in $x,y,z$. For each $a,b,c\in \bF_q$, not all zero, we obtain a conic 
\[
\cN(a,b,c)=\cZ(af_1+bf_2+cf_3).
\]
Since $q$ is odd we can consider the matrix $A_\cN$ of the bilinear form associated to the quadratic form (\ref{eqn:qform}). We define the {\em discriminant} of the net $\cN$ as 
\[
\Delta_\cN=\det (A_\cN).
\]
The discriminant $\Delta_\cN$ defines a cubic curve $\cZ(\Delta_\cN)$ in the plane $\PG(2,q)$.

\begin{lemma}
The conic $\cN(a,b,c)$ is singular if and only if $(a,b,c)$ lies on the cubic $\cZ(\Delta_\cN)$.
\end{lemma}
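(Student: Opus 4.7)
The plan is to reduce the statement to the standard fact that, over a field of characteristic not $2$, a conic is singular if and only if its associated symmetric matrix has zero determinant. Once this is established, the lemma follows by unwinding the definitions of $A_\cN$ and $\Delta_\cN$.

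First I would recall that, since $q$ is odd, every ternary quadratic form $f$ on $\bF_q^3$ can be written uniquely as $f(X) = X M_f X^T$ for a symmetric $3\times 3$ matrix $M_f$ (namely the matrix of the associated bilinear form $\tfrac{1}{2}(f(X+Y)-f(X)-f(Y))$). A standard argument shows that the conic $\cZ(f)$ is singular (equivalently, has a singular point) precisely when $M_f$ is singular, i.e.\ when $\det(M_f)=0$. I would state this as a one-line preliminary observation.

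Next, I would unwind the construction of $A_\cN$. By definition, the matrix $A_\cN$ is the symmetric matrix associated to the quadratic form $xf_1+yf_2+zf_3$ in the variables $X_0,X_1,X_2$, so its entries are linear forms in $x,y,z$ and $A_\cN = xM_{f_1} + y M_{f_2} + z M_{f_3}$. In particular, evaluating at a point $(a,b,c)\in\bF_q^3$ gives
\[
A_\cN(a,b,c) = a M_{f_1} + b M_{f_2} + c M_{f_3} = M_{af_1+bf_2+cf_3}.
\]
That is, the evaluation of $A_\cN$ at $(a,b,c)$ is exactly the symmetric matrix of the quadratic form $af_1+bf_2+cf_3$ defining the conic $\cN(a,b,c)$.

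Combining these two observations, $\cN(a,b,c)$ is singular if and only if $\det\bigl(A_\cN(a,b,c)\bigr)=0$, which is precisely the condition $\Delta_\cN(a,b,c)=0$, i.e.\ $(a,b,c)\in\cZ(\Delta_\cN)$. There is no real obstacle here; the only point requiring care is that the correspondence between quadratic forms and symmetric matrices (and hence the singularity criterion via the determinant) uses the assumption that $q$ is odd, as already built into the setup of the paper.
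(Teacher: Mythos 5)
Your proposal is correct and follows essentially the same route as the paper: both reduce the lemma to the standard fact (valid since $q$ is odd) that a conic is singular exactly when its associated symmetric matrix is singular, and then observe that $A_\cN$ evaluated at $(a,b,c)$ is the symmetric matrix of $af_1+bf_2+cf_3$. The paper phrases the singularity criterion via the vanishing of the three partial derivatives, whose coefficient matrix is $A_\cN(a,b,c)$, but this is the same determinant condition you use.
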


\begin{proof}
Since $q$ is odd, this follows from the fact that the matrix of the linear system obtained by setting the three partial derivatives of $\cN(a,b,c)$ evaluated at a point belonging to $\cN(a,b,c)$ equal to zero has determinant equal to $\Delta_\cN$ evaluated at $(a,b,c)$. 
\end{proof}

A net has {\em rank one} if it contains a repeated line; {\em rank two} if it contains no repeated lines but contains a conic which is not absolutely irreducible; and {\em rank three} if every conic in the net is absolutely irreducible.

\subsection{The quadric Veronesean} \label{ss:V}
We represent points $y=(y_0,y_1,y_2,y_3,y_4,y_5,y_6)$ of $\PG(5,q)$ by symmetric matrices
\begin{equation} \label{eq:symmetricMatrix}
M_y= \left[ 
\begin{matrix} y_0 & y_1 & y_2 \\ y_1 & y_3 & y_4\\ y_2 & y_4 & y_5\\\end{matrix}
\right].
\end{equation}
The Veronese surface $\cV(\bF_q)$ in $\PG(5,q)$ is defined by setting the $2\times 2$ minors of the above matrix equal to zero, and we have the corresponding Veronese map from $\PG(2,q)$ to $\cV(\bF_q) \subset \PG(5,q)$: 
\[
\nu:(x_0,x_1,x_2)\mapsto (x_0^2,x_0x_1,x_0x_2,x_1^2,x_1x_2,x_2^2). 
\]
The {\em rank} of a point in $\PG(5,q)$ is defined to be the rank of the matrix $M_y$, and we denote by $\cP_i$ the set of points of rank $i$ for $i=1,2,3$. 
For convenience, we sometimes denote the rank of a point $x$ by $\operatorname{rank}(x)$. 
The points contained in $\cV(\bF_q)$ are points of rank $1$, and the points of rank $2$ are those contained in the secant variety of $\cV(\bF_q)$. 

Let us also partition the set of hyperplanes of $\PG(5,q)$ into the following subsets, depending on their intersection with $\cV(\bF_q)$:
\begin{itemize}
\item $\cH_1$ is the set of hyperplanes intersecting $\cV(\bF_q)$ in a conic,
\item $\cH_3$ is the set of hyperplanes intersecting $\cV(\bF_q)$ in a normal rational curve,
\item $\cH_2$ is the set of hyperplanes not contained in $\cH_1\cup \cH_3$.
\end{itemize}
Let $\delta$ denote the map from the set of conics in $\PG(2,q)$ to the set of hyperplanes in $\PG(5,q)$ which takes the conic $\cC=\cZ(f)$ with $f(X_0,X_1,X_2)=\sum_{i\le j}a_{ij}X_iX_j$ to
\[
\delta(\cC)=H[a_{00},a_{01},a_{02},a_{11},a_{12},a_{22}],
\]
where $H[a_0,\ldots,a_5]$ denotes the hyperplane with equation $a_0Y_0+\ldots +a_5Y_5=0$.

\begin{lemma} \label{nu-delta-lemma} 
The maps $\nu$ and $\delta$ satisfy the following properties, where $\cC$ is a conic in $\PG(2,q)$.
\begin{itemize}
\item[(i)] A point $x$ in $\PG(2,q)$ belongs to $\cC$ if and only if $\nu(x)\in \delta(\cC)$.
\item[(ii)] $\cC$ is a repeated line if and only if $\delta(\cC)\in \cH_1$.
\item[(iii)] $\cC$ is a point if and only if $\delta(\cC)\in \cH_2$ and $\delta(\cC)$ intersects $\cV(\bF_q)$ in a point.
\item[(iv)] $\cC$ is a union of two lines if and only if $\delta(\cC)\in \cH_2$ and $\delta(\cC)$ intersects $\cV(\bF_q)$ in two conics.
\item[(v)] $\cC$ is non-degenerate if and only if $\delta(\cC)\in \cH_3$.
\end{itemize}
\end{lemma}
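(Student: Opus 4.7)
The plan is to derive (i) directly from the definitions, and then use it together with the classification of ternary quadratic forms to establish (ii)--(v) by matching each conic type with the intersection type of its associated hyperplane.

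For (i), if $f(X_0,X_1,X_2) = \sum_{i \leq j} a_{ij} X_i X_j$ and $\delta(\cC) = H[a_{00},a_{01},a_{02},a_{11},a_{12},a_{22}]$, then evaluating the defining equation of $\delta(\cC)$ at $\nu(x_0,x_1,x_2) = (x_0^2, x_0 x_1, x_0 x_2, x_1^2, x_1 x_2, x_2^2)$ returns precisely $f(x_0,x_1,x_2)$. Hence $\nu(x) \in \delta(\cC)$ if and only if $f(x) = 0$, if and only if $x \in \cC$.

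For (ii)--(v), I would use the fact that over $\bF_q$ with $q$ odd the conic $\cC=\cZ(f)$ falls into exactly one of four mutually exclusive classes, according to the rank and factorisation of $f$: a repeated line (rank $1$); a union of two distinct $\bF_q$-lines (rank $2$, reducible over $\bF_q$); a single $\bF_q$-point arising from a pair of conjugate linear factors over $\bF_{q^2}$ (rank $2$, irreducible over $\bF_q$); and a non-degenerate conic (rank $3$). Since $\delta$ is a linear isomorphism of $6$-dimensional vector spaces, it induces a bijection between non-zero ternary quadratic forms up to scalar and hyperplanes of $\PG(5,q)$, so it suffices to verify one direction of each equivalence; the others then follow from the exclusive trichotomy $\cH_1 \sqcup \cH_2 \sqcup \cH_3$. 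For each conic type I would use (i) to compute $\delta(\cC) \cap \cV(\bF_q)$: if $\cC$ is the repeated line $\ell$, the intersection equals $\nu(\ell)$, a conic in the plane spanned by the Veronese image of $\ell$, giving $\delta(\cC) \in \cH_1$; if $\cC = \ell_1 \cup \ell_2$, it equals $\nu(\ell_1) \cup \nu(\ell_2)$, a pair of conics meeting at $\nu(\ell_1 \cap \ell_2)$; if $\cC$ is a single $\bF_q$-point, it is one point of $\cV(\bF_q)$; and if $\cC$ is non-degenerate, then composing a degree-$2$ parametrisation $\PG(1,q) \to \cC$ with $\nu$ produces a degree-$4$ rational image whose components span a $4$-dimensional subspace, i.e.~a normal rational curve filling out $\delta(\cC)$, so $\delta(\cC) \in \cH_3$.

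The main subtlety will be ensuring that $\delta(\cC) \cap \cV(\bF_q)$ is \emph{exactly} the locus described in each case, rather than strictly larger. This is handled by (i), because every rank-one point of $\delta(\cC)$ has the form $\nu(x)$ for some $x \in \PG(2,q)$, and (i) forces $x \in \cC$; the set of $\bF_q$-points of $\cC$ is completely determined by the factorisation of $f$. The resulting partition of conics according to the type of $f$ matches the partition of hyperplanes into $\cH_1$, $\cH_2$ (further subdivided into the ``point'' and ``two conics'' subclasses), and $\cH_3$, and the equivalences (ii)--(v) follow via the bijectivity of $\delta$.
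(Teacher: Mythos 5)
The paper states this lemma without proof, presenting it as standard background on the Veronese surface, so there is no argument of the authors' to compare against. Your proof is correct and complete: part (i) is exactly the identity obtained by substituting $\nu(x)$ into the defining equation of $H[a_{00},\ldots,a_{22}]$, and your strategy for (ii)--(v) --- proving only the forward implication for each of the four mutually exclusive conic types over $\bF_q$ with $q$ odd (rank $1$; rank $2$ split; rank $2$ non-split; rank $3$), and then deducing the converses from the mutual exclusivity of the four intersection behaviours of $\delta(\cC)$ with $\cV(\bF_q)$ --- is logically sound, with (i) correctly invoked to guarantee that $\delta(\cC) \cap \cV(\bF_q) = \nu(\cZ(f))$ exactly. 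The one step worth spelling out is the claim in case (v) that $\nu(\cC)$ is a normal rational curve spanning $\delta(\cC)$: parametrising $\cC$ by three linearly independent binary quadrics $p_0,p_1,p_2$, the six products $p_ip_j$ ($i\le j$) span the full $5$-dimensional space of binary quartics, so the composite map has image a degree-$4$ normal rational curve whose linear span is precisely the hyperplane $\delta(\cC)$.
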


Furthermore, the image of a line $\ell$ in $\PG(2,q)$ under the Veronese map is a conic in $\cV(\bF_q)$. 
A plane in $\PG(5,q)$ intersecting $\cV(\bF_q)$ in a conic is called a {\it conic plane}. 
Each conic plane is equal to $\langle \nu(\ell)\rangle$ for some line $\ell$ in $\PG(2,q)$. 
Each two points $x,y \in \cV(\bF_q)$ lie on a unique conic $\cC(x,y) \subset \cV(\bF_q)$ given by
\begin{equation} \label{eqn:C(x,y)}
\cC(x,y)=\nu(\langle \nu^{-1}(x),\nu^{-1}(y)\rangle).
\end{equation}
Each rank-$2$ point $z \in \langle \cV(\bF_q)\rangle$ lies in a unique conic plane $\langle \cC_z \rangle$. 
If $z$ is on the secant $\langle x,y\rangle$ then
\[
\cC_z = \cC(x,y).
\]

We also extend the definition of $\delta$ from the set of conics to the sets of pencils and nets of conics as follows. 
Given any set $\mathcal S$ of conics in $\PG(2,q)$, define
\[
\delta ({\mathcal{S}})=\cap_{\cC\in {\mathcal{S}}} \delta(\cC).
\]
In this way we obtain the following one-to-one correspondences.

\begin{lemma}
If $\cP$ is a pencil (respectively, net) of conics in $\PG(2,q)$ then $\delta(\cP)$ is a solid (respectively, plane) in $\PG(5,q)$, and conversely.
\end{lemma}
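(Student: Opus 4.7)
The plan is to show that $\delta$ is, up to projective equivalence, a duality between the space of conics and the space of hyperplanes of $\PG(5,q)$, and then read off the claim from standard projective duality. First I would observe that the set of nonzero ternary quadratic forms on $\bF_q^3$ up to scalar is itself a $5$-dimensional projective space $\Pi$, and by construction $\delta$ sends the conic $\cZ(\sum_{i\le j}a_{ij}X_iX_j) \in \Pi$ to the hyperplane of $\PG(5,q)$ represented in the dual space $\PG(5,q)^*$ by the point $(a_{00},a_{01},a_{02},a_{11},a_{12},a_{22})$. Since this assignment is induced by a linear isomorphism of the underlying $6$-dimensional vector spaces, $\delta \colon \Pi \to \PG(5,q)^*$ is a projectivity.

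Next I would translate the definitions of pencil and net: a pencil $\cP$ of conics is a line in $\Pi$ (spanned by two linearly independent forms $f_1, f_2$), and a net $\cN$ is a plane in $\Pi$ (spanned by three linearly independent forms, by the assumption that the three conics are not contained in a pencil). Via the projectivity $\delta$, these correspond respectively to a line and a plane of $\PG(5,q)^*$. The subspace $\delta(\cP) = \bigcap_{\cC \in \cP}\delta(\cC)$ of $\PG(5,q)$ is, by standard projective duality, the annihilator of this line, hence a subspace of codimension~$2$, i.e.\ a solid; and similarly $\delta(\cN)$ is the annihilator of a plane of $\PG(5,q)^*$, hence a plane of $\PG(5,q)$.

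For the converse, I would start with a solid (respectively, plane) $S \subset \PG(5,q)$, form its annihilator in $\PG(5,q)^*$ (a line, respectively a plane), and pull this back through the projectivity $\delta^{-1}$ to obtain a pencil (respectively, net) $\cP$ with $\delta(\cP) = S$. Each step is a routine application of projective duality between subspaces and their annihilators, so no serious obstacle is anticipated; the only point requiring any care is verifying that the correspondence $f \leftrightarrow (a_{ij})$ between ternary quadratic forms and dual coordinate vectors is indeed linear, which is immediate from the defining formula of~$\delta$.
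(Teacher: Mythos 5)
Your proof is correct, and it is the standard duality argument the paper evidently has in mind (the lemma is stated there without proof, as an immediate consequence of the definition of $\delta$ on sets of conics). The identification of a pencil (respectively, net) with a line (respectively, plane) in the dual space, followed by passing to annihilators, is exactly the intended reasoning.
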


The dual map $\delta^*$ of $\delta$ from the set of conics in $\PG(2,q)$ to the set of points in $\PG(5,q)$ is given by
\[
\delta^*:\cZ\Big(\sum_{i\le j}a_{ij}X_iX_j\Big)\mapsto 
(a_{00},a_{01},a_{02},a_{11},a_{12},a_{22}).
\]
If $\cP$ is a pencil and $\cN$ is a net of conics in $\PG(2,q)$, then $\delta^*(\cP)$ is a line and $\delta^*(\cN)$ is a plane in $\PG(5,q)$.

\subsection{The action of $\PGL(3,q)$ on $\PG(5,q)$} \label{ss:PGL3q-action}
Recall that we represent points in $\PG(5,q)$ by symmetric $3 \times 3$ matrices as per equation~\eqref{eq:symmetricMatrix} (modulo scalars). 
The action of the group $\PGL(3,q)$ on the points of $\PG(5,q)$ referred to in Theorem~\ref{mainthm} is defined as follows. 

If $\varphi_A \in \PGL(3,q)$ is induced by $A\in \GL(3,q)$ then we define $\alpha(\varphi_A) \in \PGL(6,q)$ by
\[
\alpha(\varphi_A):y\mapsto z \quad \text{where} \quad M_z=AM_yA^T.
\] 
We write
\[
K:=\alpha(\PGL(3,q))\le \PGL(6,q).
\]
This also defines an action of $\PGL(3,q)$ on subspaces of $\PG(5,q)$. 
The following observation is now readily deduced.

\begin{prop} \label{prop:nets_vs_planes}
The classification of nets of conics in $\PG(2,q)$ up to coordinate transformations is equivalent to the classification of the $K$-orbits of planes in $\PG(5,q)$.
\end{prop}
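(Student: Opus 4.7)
The proposition is a formal consequence of the setup in Section~\ref{sec:prelims}: the plan is to show that $\delta^*$, combined with the definition of $K=\alpha(\PGL(3,q))$, provides an equivariant bijection between nets of conics in $\PG(2,q)$ and planes in $\PG(5,q)$. Projectively $\delta^*$ is the linear isomorphism sending a ternary quadratic form $\sum_{i\le j}a_{ij}X_iX_j$ to its coefficient vector $(a_{00},a_{01},a_{02},a_{11},a_{12},a_{22})$, so modulo scalars it is a bijection from conics in $\PG(2,q)$ onto points of $\PG(5,q)$. Linearity of $\delta^*$ forces $k$-dimensional linear systems of conics to correspond to $k$-dimensional projective subspaces of $\PG(5,q)$; specialising to $k=2$ gives the bijection between nets and planes already recorded in the preceding lemma.

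Next I would verify equivariance. For $A\in\GL(3,q)$, the coordinate transformation $\varphi_A$ sends the conic $\cZ(f)$ with $f(X)=X^T M X$ to the conic whose symmetric matrix is, after the routine substitution and up to replacing $A$ by $A^{-T}$, precisely $AMA^T$. By the definition of $\alpha(\varphi_A)$ in Section~\ref{ss:PGL3q-action} this means $\delta^*(\varphi_A\cdot\cC)=\alpha(\varphi_A)(\delta^*(\cC))$ for every conic $\cC$, and linearity promotes the identity to the level of linear systems: $\delta^*(\varphi_A\cdot\cN)=\alpha(\varphi_A)(\delta^*(\cN))$ for every net $\cN$.

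Bijectivity together with equivariance then immediately implies that two nets are equivalent under the induced action of $\PGL(3,q)$ if and only if the corresponding planes lie in the same $K$-orbit on $\PG(5,q)$, which is the required equivalence of classification problems. There is no serious obstacle here; the only point that needs care is the bookkeeping around conventions for the action on symmetric matrices. The formula $M\mapsto AMA^T$ used to define $\alpha$ is tailored precisely so that $\delta^*$ is strictly equivariant, and any alternative choice differs only by composition with the graph automorphism $A\mapsto A^{-T}$ of $\PGL(3,q)$, which does not affect the orbit structure on $\PG(5,q)$.
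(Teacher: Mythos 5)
Your proof is correct and follows exactly the route the paper intends: the paper states this proposition as an observation that is ``readily deduced'' from the correspondence $\delta$/$\delta^*$ between nets and planes and from the definition of the action $\alpha$, and your write-up simply makes explicit the linearity of $\delta^*$ and the equivariance computation $M\mapsto AMA^T$ (up to the harmless automorphism $A\mapsto A^{-T}$) that the paper leaves implicit. No gaps.
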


\subsection{Properties of the non-degenerate conic in $\PG(2,q)$, $q$ odd}\label{subsec:conic_properties}

Let $\cC$ be a non-degenerate conic in $\PG(2,q)$, $q$ odd. 
Its projective stabiliser $G \le \PGL(3,q)$ is isomorphic to ${\mathrm{PGL}}(2,q)$. 
This can be seen using the Veronese map from $\PG(1,q)$ to $\PG(2,q)$, and it implies that
\begin{enumerate}
\item[(C1)] $G$ acts sharply 3-transitively on the points of $\cC$.
\end{enumerate}
The group $G$ has order $(q+1)q(q-1)$, and $\cC$ contains $q+1$ points. 
A point not on $\cC$ is an {\it external point} if it lies on a tangent line to $\cC$ (in which case it lies on exactly two tangents to $\cC$), and an {\it internal point} otherwise.
The set $\cE$ of external points has size $q(q+1)/2$, and the set $\cI$ of internal points has size $q(q-1)/2$.
If $x,y$ are two external points whose polar lines meet $\cC$ in points $x_1,x_2$ and $y_1,y_2$ respectively, then by mapping $x_i$ to $y_i$ for both $i=1,2$ we deduce that
\begin{enumerate}
\item[(C2)] $G$ acts transitively on $\cE$, and 
\item[(C2a)] the stabiliser $G_x \le G$ of an external point $x \in \cE$ acts transitively on the points of $\cC$ not on the polar line of $x$.
\end{enumerate}
In fact the above argument shows that one only needs 2-transitivity of $G$ on the points on $\cC$ to deduce transitivity on $\cE$, so we also have that
\begin{enumerate}
\item[(C3)] the stabiliser $G_w \le G$ of a point $w\in \cC$ acts transitively on both 
(i) the points of $\cE$ not on the tangent line $t_w(\cC)$ of $\cC$ at $w$, and (ii) the points of $t_w(\cC) \setminus \{w\}$. 
\end{enumerate}
If $x\in \cE$ then the orbit-stabiliser theorem implies that $|G_x|=2(q-1)$. 
The dual statements of the above properties are:
\begin{enumerate}
\item[(C4)] $G$ acts transitively on the secants of $\cC$, and
\item[(C5)] the stabiliser $G_w \le G$ of a point $w\in \cC$ acts transitively on both (i) the secants not passing through $w$, and (ii) the set of lines through $w$ different from $t_w(\cC)$.
\end{enumerate}

We now turn our attention to the action of $G$ on the set of internal points $\cI$.
For this we consider the quadratic extension $\bF_{q^2}$ of $\bF_q$, denoting the extension (if well defined) of an object $A$ over $\bF_q$ to $\bF_{q^2}$ by $\overline{A}$. 
An internal point $x$ of $\cC$ in $\PG(2,q)$ becomes an external point $\overline x$ of $\overline \cC$ in $\PG(2,q^2)$. 
If $\ell$ is the polar line of $x$ then $\overline \ell$ is a secant to $\overline \cC$. 
The stabiliser of $\overline \ell$ in $\overline G$ has order $2(q^2-1)$. 
The points of intersection of $\overline \ell$ and $\overline \cC$ are conjugate points $p$ and $p^\sigma$, where $\sigma$ denotes the involution in ${\mathrm{P\Gamma L}}(3,q^2)$ induced by the Frobenius map $a\mapsto a^q$. 
The stabiliser of $\ell\subset \overline \ell$ and the unordered pair $(p,p^\sigma)$ equals the stabiliser of $\ell$ in $G$, which is the group ${\mathrm{PGO}}^-(2,q)$. 
This group has order $2(q+1)$. 
In its action on $\overline \ell$ it acts sharply transitively on the points of $\ell$. 
This implies that the stabiliser $G_x$ of a point $x \in \cI$ has order $2(q+1)$, and hence that the orbit of $x$ under $G$ has size $q(q-1)/2 = |\cI|$. 
This implies that 

\begin{enumerate}
\item[(C6)] $G$ acts transitively on $\cI$, and
\item[(C7)] $G$ acts transitively on the set of lines external to $\cC$ (that is, not intersecting $\cC$).
\end{enumerate}

\begin{lemma}
Let $G$ be the projective stabiliser of a non-degenerate conic $\cC$ in $\PG(2,q)$, $q$ odd.
The stabiliser $G_w \le G$ of a point $w\in \cC$ acts transitively on the set of internal points to $\cC$.
\end{lemma}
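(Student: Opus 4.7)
The plan is to reduce to the complementary statement that the stabiliser $G_x \le G$ of an internal point $x \in \cI$ acts transitively on $\cC$. By (C6) and orbit--stabiliser we have $|G_x| = 2(q+1)$ and $|G_w| = q(q-1)$; since $\cC \cong G/G_w$ and $\cI \cong G/G_x$ as $G$-sets, the standard factorisation principle gives the chain of equivalences
\[
G_w \text{ transitive on } \cI \iff G = G_w G_x \iff G_x \text{ transitive on } \cC,
\]
so it suffices to prove the last of these.

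To establish that $G_x$ is transitive on $\cC$, I would exhibit a subgroup $T \le G_x$ of order $q+1$ acting regularly on $\cC$. As recalled in the paragraph preceding (C6), $G_x$ is the stabiliser in $G$ of the unordered pair $\{p,p^\sigma\}\subset\overline{\cC}$ cut out by the extension $\overline{\ell}$ of the polar line of $x$, and $G_x\cong{\mathrm{PGO}}^-(2,q)$ is dihedral of order $2(q+1)$. Let $T\le G_x$ be the index-$2$ subgroup fixing each of $p$ and $p^\sigma$ individually; equivalently, $T$ is the intersection with $G=\PGL(2,q)$ of the cyclic stabiliser of the ordered pair $(p,p^\sigma)$ in $\overline{G}\cong\PGL(2,q^2)$. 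This $T$ is the non-split torus of $G$, cyclic of order $q+1$.

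I would then check that $T$ acts regularly on $\cC$: any $t \in T$ fixing some $y \in \cC$ would fix the three distinct points $p, p^\sigma, y$ of $\overline{\cC} \cong \PG(1, q^2)$, contradicting the sharp $3$-transitivity of $\overline{G}$ on $\overline{\cC}$ (namely, property (C1) applied over $\bF_{q^2}$) unless $t = 1$. Hence $T$ is semiregular on $\cC$, and since $|T| = q+1 = |\cC|$ it is in fact regular. In particular $G_x \supseteq T$ is transitive on $\cC$, and by the reduction above $G_w$ is transitive on $\cI$.

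The step I expect to be the main obstacle is pinning down the order of $T$ precisely: a priori one only knows that the stabiliser of the ordered pair $(p,p^\sigma)$ in $\overline{G}$ is cyclic of order $q^2-1$, and one must verify that its $\bF_q$-rational part has order exactly $q+1$. This is already encoded in the dihedral structure $G_x\cong{\mathrm{PGO}}^-(2,q)$ established in the paragraph preceding (C6), so no additional computation is required beyond invoking it.
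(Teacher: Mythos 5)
Your proof is correct, but it takes a genuinely different route from the paper's. The paper argues directly: given two internal points $x,y$ with polar lines $\ell,m$ meeting $\overline{\cC}$ in conjugate pairs $\{p,p^\sigma\}$ and $\{r,r^\sigma\}$, it takes the unique element of $\PGL(2,q^2)$ carrying the frame $(p',p'^{\sigma},w')$ to $(r',r'^{\sigma},w')$, checks that it commutes with $\sigma$ (because the commutator fixes a frame), concludes it is $\bF_q$-rational, and observes that it lies in $G_w$ and maps $x$ to $y$. You instead invoke the factorisation principle $G=G_wG_x \iff G_w$ transitive on $\cI \iff G_x$ transitive on $\cC$ (legitimate here since transitivity of $G$ on $\cC$ and on $\cI$ are already available as (C1) and (C6)), and then prove the last statement by showing the non-split torus $T\le G_x$ acts regularly on $\cC$ via semiregularity (no non-identity element can fix $p$, $p^\sigma$ and a point of $\cC$, by sharp $3$-transitivity over $\bF_{q^2}$) together with $|T|=q+1=|\cC|$. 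Both arguments rest on the same quadratic-extension setup preceding (C6). Your version is more structural and yields a reusable by-product (the regular action of the non-split torus on the conic); one small polish: rather than asserting up front that $T$ has index exactly $2$ in $G_x$, note that the homomorphism $G_x\to\Sym(\{p,p^\sigma\})$ gives index at most $2$, so $|T|\ge q+1$, while semiregularity gives $|T|\le q+1$; this pins down $|T|=q+1$ and the regularity simultaneously, without leaning on the precise dihedral structure of $\mathrm{PGO}^-(2,q)$. The paper's version is more hands-on and constructs the required element of $G_w$ explicitly.
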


\begin{proof}
We continue with the notation introduced above. 
Consider two distinct internal points $x,y$ with polar lines $\ell,m$. 
Let $p, p^\sigma$ and $r, r^\sigma$ be the points of intersection of $\overline \ell$ and $\overline m$ with $\overline \cC$.
Let $\alpha$ be the unique element of ${\mathrm{PGL}}(2,q^2)$ mapping the frame $(p',p'^\sigma,w')$ to the frame $(r',r'^\sigma,w')$, where $p', r' , \ldots $ denote the preimages of the points $p, r, \ldots$ under the Veronese map from $\PG(1,q^2)$ to $\overline \cC$. 
Then $\alpha \sigma \alpha^{-1}\sigma$ fixes the frame $(p',p'^\sigma,w')$ and hence, since $\alpha$ belongs to ${\mathrm{PGL}}(2,q^2)$, which is normal in ${\mathrm{P\Gamma L}}(2,q^2)$, it follows that $\sigma \alpha^{-1}\sigma=\sigma^{-1} \alpha^{-1}\sigma\in {\mathrm{PGL}}(2,q^2)$. 
Therefore, $\alpha \sigma \alpha^{-1}\sigma$ is the identity, so $\alpha$ commutes with $\sigma$, implying $\alpha \in {\mathrm{PGL}}(2,q)$.
It follows that $\alpha$ induces an element in $G_w$ mapping $\ell$ to $m$ and therefore also $x$ to $y$. 
\end{proof}

We summarise the above results in the following lemma (using again the same notation).

\begin{lemma}\label{lem:Gw}
Let $G$ be the projective stabiliser of a non-degenerate conic $\cC$ in $\PG(2,q)$, $q$ odd, and let $G_w \le G$ be the stabiliser of a point $w\in \cC$. 
Then the $G_w$-orbits of points in $\PG(2,q)$ are precisely
$\{w\}$, $\cC \backslash \{w\}$, $\cE\backslash t_w(\cC)$, $t_w(\cC)\backslash \{w\}$, and $\cI$.
\end{lemma}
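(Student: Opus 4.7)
The plan is to observe that the five listed sets already form a partition of $\PG(2,q)$ into $G_w$-invariant subsets, and then quote the transitivity statements that have been established in the previous results.

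First I would verify the partition. Every point of $\PG(2,q)$ lies in exactly one of $\cC$, $\cE$, $\cI$. The point $w$ belongs to $\cC$, and the tangent line $t_w(\cC)$ meets $\cC$ only in $w$, so $t_w(\cC)\setminus\{w\}\subset\cE$. Hence
\[
\PG(2,q) \;=\; \{w\}\;\sqcup\;(\cC\setminus\{w\})\;\sqcup\;(t_w(\cC)\setminus\{w\})\;\sqcup\;(\cE\setminus t_w(\cC))\;\sqcup\;\cI,
\]
a quick cardinality check giving $1 + q + q + q(q-1)/2 + q(q-1)/2 = q^2+q+1$, consistent. Each of the five blocks is clearly $G_w$-invariant: $G_w$ fixes $w$ and preserves $\cC$, so it preserves $\cC\setminus\{w\}$; it preserves the tangent line at $w$ (which is intrinsically determined by $w$ and $\cC$), hence preserves $t_w(\cC)\setminus\{w\}$; and it preserves the sets $\cE$ and $\cI$, hence also $\cE\setminus t_w(\cC)$.

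It then remains to show that $G_w$ acts transitively on each of these blocks. Transitivity on $\cC\setminus\{w\}$ is immediate from property (C1), namely the sharp $3$-transitivity of $G$ on $\cC$ (in particular $2$-transitivity, which yields transitivity of $G_w$ on the remaining points). Transitivity on $\cE\setminus t_w(\cC)$ and on $t_w(\cC)\setminus\{w\}$ is precisely (C3). Transitivity on $\cI$ is exactly the content of the lemma immediately preceding this summary. Combining these facts with the partition gives the claimed list of $G_w$-orbits.

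The argument is essentially bookkeeping once the preceding results are in place, so there is no real obstacle; the only thing to be careful about is to note explicitly that the five sets genuinely exhaust $\PG(2,q)$ and that $t_w(\cC)\setminus\{w\}$ sits inside $\cE$ rather than intersecting $\cI$, so that the five orbits listed are indeed distinct and complete.
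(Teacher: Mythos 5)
Your proof is correct and takes essentially the same route as the paper, which presents this lemma explicitly as a summary of the preceding results (C1), (C3) and the lemma on transitivity of $G_w$ on $\cI$; your added bookkeeping that the five sets partition $\PG(2,q)$ and are $G_w$-invariant is exactly the implicit content being summarised.
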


Finally, we note that for $w,u \in \cC$ the two-point stabiliser $G_{w,u} \le G$ acts sharply transitively on points of $\cC\backslash \{w,u\}$ and has in total $q+6$ orbits on the points of $\PG(2,q)$:
\begin{itemize}
\item three fixed points $\{w\},\{u\},\{t_u\cap t_w\}$, 
\item the points of $\cC\backslash\{u,w\}$, 
\item the external points on the line $\pt{u,w}$, 
\item the internal points on the line $\pt{u,w}$, 
\item the points on $t_w(\cC) \backslash \{w,t_u(\cC)\cap t_w(\cC)\}$, 
\item the points on $t_u(\cC) \backslash \{u,t_u(\cC)\cap t_w(\cC)\}$, 
\item $q-2$ additional orbits of size $q-1$. 
\end{itemize}

\section{Points, lines, solids and hyperplanes of $\PG(5,q)$}\label{sec:pts_lines_sols_hyps}

As before, let $K$ denote the subgroup $\alpha(\PGL(3,q))$ of $\PGL(6,q)$ defined in Section~\ref{ss:PGL3q-action}. 
Before we study the classification of nets of conics, we consider the other linear systems of conics in $\PG(2,q)$, or equivalently, the $K$-orbits on subspaces of $\PG(5,q)$ of dimension $\neq 2$. 
A more general version of Proposition \ref{prop:nets_vs_planes} is the following.

\begin{lemma}\label{lem:duality}
If $q$ is odd then the $K$-orbits of points (respectively, lines) in $\PG(5,q)$ are in one-to-one correspondence with the $K$-orbits of hyperplanes (respectively, solids) in $\PG(5,q)$.
\end{lemma}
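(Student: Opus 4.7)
The strategy is to construct an explicit $K$-equivariant polarity $\pi$ of $\PG(5,q)$ that sends points to hyperplanes and lines to solids; the induced map on orbits will then be the required bijection. For this I would introduce the symmetric bilinear form $\langle M,N\rangle := \operatorname{tr}(MN)$ on the $6$-dimensional $\bF_q$-vector space $S$ of symmetric $3\times 3$ matrices. Symmetry is immediate from $\operatorname{tr}(MN)=\operatorname{tr}(NM)$. For non-degeneracy, suppose $\operatorname{tr}(MN)=0$ for all $N\in S$ and write $M=(m_{ij})$: pairing $M$ with the diagonal symmetric matrices forces $m_{ii}=0$, and pairing with the symmetric matrices that have a single pair of off-diagonal $1$'s forces $2m_{ij}=0$. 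The latter yields $m_{ij}=0$ precisely because $q$ is odd. The polarity
\[
\pi : U \mapsto U^{\perp} = \{N\in S : \langle M,N\rangle = 0 \text{ for all } M\in U\}
\]
is therefore an involution on subspaces of $\PG(S)=\PG(5,q)$, exchanging $i$-dimensional subspaces with $(4-i)$-dimensional ones; in particular, it swaps points with hyperplanes and lines with solids.

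Next I would verify that $\pi$ intertwines the $K$-action correctly. For any $A\in \GL(3,q)$ and any $M,N\in S$,
\[
\langle AMA^T, N\rangle = \operatorname{tr}(AMA^T N) = \operatorname{tr}(MA^T N A) = \langle M, A^T N A\rangle,
\]
which rearranges to the relation $\pi\circ \alpha(\varphi_A) = \alpha(\varphi_{A^{-T}})\circ \pi$ on subspaces of $\PG(5,q)$. The inverse-transpose map $A\mapsto A^{-T}$ is a bijection of $\GL(3,q)$ that descends to an automorphism of $\PGL(3,q)$, so $\alpha(\varphi_A)\mapsto \alpha(\varphi_{A^{-T}})$ is an automorphism of the group $K$. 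Consequently, $\pi$ carries $K$-orbits on points bijectively to $K$-orbits on hyperplanes, and $K$-orbits on lines bijectively to $K$-orbits on solids.

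The only step that requires any care is the bookkeeping in the intertwining relation above and the observation that the $A\mapsto A^{-T}$ twist preserves $K$ setwise. The rest is routine linear algebra over $\bF_q$, $q$ odd.
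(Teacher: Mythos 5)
Your proof is correct. The paper itself gives no argument for this lemma, deferring entirely to \cite[Section~6]{LaPo2017}, so you have supplied a self-contained proof where the authors only cite. Your mechanism --- the non-degenerate trace form $\operatorname{tr}(MN)$ on the space of symmetric $3\times 3$ matrices (non-degeneracy using $q$ odd is exactly where the hypothesis enters), the induced polarity swapping $i$-spaces with $(4-i)$-spaces, and the intertwining relation $\pi\circ\alpha(\varphi_A)=\alpha(\varphi_{A^{-T}})\circ\pi$ combined with the fact that $A\mapsto A^{-T}$ induces an automorphism of $\PGL(3,q)$ and hence preserves $K$ setwise --- is the standard argument and is exactly the duality that the paper encodes implicitly via the maps $\delta$ and $\delta^*$ of Section~2.2 (the pairing $\sum_{i\le j}a_{ij}Y_{ij}$ there is the trace form up to the identification of a conic's coefficients with a symmetric matrix). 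All the individual steps check out: the diagonal and off-diagonal test matrices do force $M=0$ when $q$ is odd, the dimension count $i\mapsto 4-i$ correctly sends points to hyperplanes and lines to solids, and the twist by inverse-transpose does carry $K$-orbits bijectively to $K$-orbits since $\pi$ is an involution. Nothing is missing.
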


\begin{proof}
This is well known; see for example \cite[Section~6]{LaPo2017}.
\end{proof}

The $K$-orbits on points of $\PG(5,q)$ are well understood (see for example \cite[Section 2.2]{LaPo2017}).
In the notation established in Section~\ref{ss:V}, the correspondence between the $K$-orbits of points and the $K$-orbits of hyperplanes is as follows.

\begin{lemma}
For $i\in \{1,3\}$, the $K$-orbit $\cP_i$ corresponds to the $K$-orbit $\cH_i$. The $K$-orbits on $\cP_2$ correspond to the $K$-orbits on $\cH_2$.
\end{lemma}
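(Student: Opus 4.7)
The strategy is to identify both points and hyperplanes of $\PG(5,q)$ with conics in $\PG(2,q)$, and then read off the orbit correspondence from Lemma~\ref{nu-delta-lemma}. A point $y \in \PG(5,q)$ is naturally identified with the conic $\cC_y := \cZ(X^T M_y X)$, whose rank equals the rank of $y$; on the hyperplane side, $\delta^{-1}$ supplies a $\PGL(3,q)$-equivariant bijection between hyperplanes and conics.

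I would first recall the classical classification of conics in $\PG(2,q)$ ($q$ odd) into four $\PGL(3,q)$-orbits: repeated lines (rank $1$); pairs of distinct $\bF_q$-rational lines (rank $2$, reducible); pairs of conjugate $\bF_{q^2}$-lines meeting in an $\bF_q$-point (rank $2$, irreducible over $\bF_q$); and non-degenerate conics (rank $3$). Via the matrix identification these correspond to $\cP_1$, the two $K$-orbits in $\cP_2$, and $\cP_3$ respectively. Via $\delta$ and Lemma~\ref{nu-delta-lemma}, the same four conic orbits correspond to $\cH_1$, two sub-orbits of $\cH_2$ (one for each rank-$2$ type), and $\cH_3$. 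Matching the two pictures orbit-by-orbit yields the claimed correspondence $\cP_i \leftrightarrow \cH_i$ for $i \in \{1,3\}$ and a bijection between the two orbits in $\cP_2$ and the two orbits in $\cH_2$.

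The main subtlety I expect is confirming that this matching really coincides with the duality supplied by Lemma~\ref{lem:duality}. Concretely I would exhibit the polar map $y \mapsto H_y := \{z : \operatorname{tr}(M_y M_z) = 0\}$ and observe that $H_y \cap \cV(\bF_q) = \nu(\cC_y)$, so that Lemma~\ref{nu-delta-lemma} immediately places $H_y$ in $\cH_1$, $\cH_2$, or $\cH_3$ according as $\cC_y$ has rank $1$, $2$, or $3$. The one point to be careful about is that $y \mapsto H_y$ is not literally $K$-equivariant: the induced $K$-action on the symmetric-matrix parametrization of hyperplanes is $M \mapsto A^{-T} M A^{-1}$, whereas on points it is $M \mapsto A M A^T$. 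These two actions nevertheless produce the same orbits on symmetric matrices because they are intertwined by the contragredient automorphism $A \mapsto A^{-T}$ of $\GL(3,q)$, so $y \mapsto H_y$ is orbit-preserving. Once this bookkeeping is cleared up the lemma follows.
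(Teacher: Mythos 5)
Your proposal is correct, and it in fact supplies more detail than the paper does: the paper states this lemma without proof, leaning on Lemma~\ref{lem:duality} (whose proof is a citation to \cite[Section~6]{LaPo2017}) together with the known classification of point orbits. Your route — identifying both points and hyperplanes with ternary quadratic forms, invoking the four-orbit classification of conics for $q$ odd, and reading off the matching from Lemma~\ref{nu-delta-lemma} — is precisely the duality argument the paper has in mind, and your explicit polarity $y \mapsto H_y = \{z : \operatorname{tr}(M_y M_z) = 0\}$ (which is $\delta(\cC_y)$ once one checks the factor-of-two bookkeeping on the off-diagonal coefficients, harmless since $q$ is odd) correctly realises it. Your handling of the equivariance subtlety is also right: $y \mapsto H_y$ intertwines $M \mapsto AMA^T$ with $M \mapsto A^{-T}MA^{-1}$, and since $A \mapsto A^{-T}$ is an automorphism of $\GL(3,q)$ the two actions have identical orbits, so the map is orbit-preserving even though not literally equivariant. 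One small remark: your matching of the two rank-$2$ form types (split versus anisotropic) to $\cP_{2,e}$ and $\cP_{2,i}$, and of the two intersection types with $\cV(\bF_q)$ to $\cH_{2,e}$ and $\cH_{2,i}$, actually proves the refined statement that the paper records as a separate subsequent lemma; for the statement under review only the coarser bijection between the two orbits on each side is needed.
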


Recall from Section~\ref{ss:V} that each rank-$2$ point $z \in \langle \cV(\bF_q)\rangle$ lies in a unique conic plane $\langle \cC_z \rangle$. 
If $z$ lies on a tangent to the conic $\cC_z$, it is said to be an {\em exterior} point; otherwise, it is said to be an {\em interior} point.  
Since $\bF_q$ is finite and $q$ is odd, there are two $K$-orbits on $\cP_2$: the orbit of exterior points $\cP_{2,e}$ and the orbit of interior points $\cP_{2,i}$ (see \cite[Section 2.2]{LaPo2017}).
There are also two $K$-orbits on hyperplanes in $\cH_2$: the orbit $\cH_{2,e}$ of hyperplanes intersecting $\cV(\bF_q)$ in two conics, and the orbit $\cH_{2,i}$ of hyperplanes intersecting $\cV(\bF_q)$ in a point. 

\begin{lemma} 
The $K$-orbits $\cP_{2,e}$ and $\cP_{2,i}$ are in one-to-one correspondence with the $K$-orbits $\cH_{2,e}$ and $\cH_{2,i}$, respectively.
\end{lemma}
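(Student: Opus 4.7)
The plan is to realise the duality of Lemma \ref{lem:duality} through rank-$2$ conics in $\PG(2,q)$. Every rank-$2$ symmetric $3\times 3$ matrix $M$ arises as the matrix of a conic $\cC$ in $\PG(2,q)$, and this single data produces both a rank-$2$ point $z=\delta^*(\cC)$ with $M_z=M$ and a hyperplane $H=\delta(\cC)\in\cH_2$. The maps $\delta^*$ and $\delta$ are equivariant with respect to the $K$-actions on $\PG(5,q)$ and the $\PGL(3,q)$-action on conics (up to the automorphism $A\mapsto A^{-T}$ of $\GL(3,q)$), so both orbit sets $\cP_2/K$ and $\cH_2/K$ are in bijection with the $\PGL(3,q)$-orbits on rank-$2$ conics in $\PG(2,q)$. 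There are exactly two of the latter: conics that split as a union of two $\bF_q$-lines, and conics whose zero locus is a single $\bF_q$-point (a conjugate pair of lines over $\bF_{q^2}$).

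By Lemma \ref{nu-delta-lemma}(iii,iv) the hyperplane side of this bijection is already determined: $\cC$ is a union of two lines iff $\delta(\cC)\in\cH_{2,e}$, and $\cC$ is a point iff $\delta(\cC)\in\cH_{2,i}$. So it remains to match rank-$2$ conics with $\cP_{2,e}$ and $\cP_{2,i}$ via $\delta^*$. Taking the normal form $\cC\colon X_0^2+\lambda X_1^2=0$ with $\lambda\in\bF_q^*$, one has $M_z=e_1e_1^T+\lambda\,e_2e_2^T$, and $z=\delta^*(\cC)$ lies in the conic plane spanned by $e_1e_1^T$, $e_1e_2^T+e_2e_1^T$, $e_2e_2^T$, in whose coordinates $\cC_z$ has equation $XZ=Y^2$ while $z=(1,0,\lambda)$. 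A short polar-line computation shows that the polar of $z$ with respect to $\cC_z$ is the line $Z=-\lambda X$, which meets $\cC_z$ precisely when $-\lambda\in\Box$. Hence $z$ is exterior iff $-\lambda$ is a square in $\bF_q$; and this is also exactly the condition under which $\cC$ factors as $(X_0-sX_1)(X_0+sX_1)$ with $s^2=-\lambda$, i.e., as two $\bF_q$-lines.

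Putting the two matchings together yields
\[
\cP_{2,e}\;\longleftrightarrow\;\{\cC\text{ two lines}\}\;\longleftrightarrow\;\cH_{2,e}\qquad\text{and}\qquad\cP_{2,i}\;\longleftrightarrow\;\{\cC\text{ a point}\}\;\longleftrightarrow\;\cH_{2,i},
\]
which is the claim. The only non-routine step is the polar computation showing that the exterior/interior property of a rank-$2$ point is controlled by $-\lambda\bmod\Box$ in the same way as the reducibility/irreducibility of $\cC$ over $\bF_q$; the rest is a direct appeal to Lemma \ref{nu-delta-lemma} together with the $K$-equivariance of $\delta$ and $\delta^*$.
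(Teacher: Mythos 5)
The paper states this lemma without proof --- it is presented as an immediate consequence of the duality in Lemma~\ref{lem:duality} together with the definitions of the four orbits --- so there is no argument of the authors' to compare yours against. Your proof is correct in substance and supplies exactly the verification the paper leaves implicit: both orbit sets are parametrised by the two $\PGL(3,q)$-classes of rank-two ternary quadratic forms, and for the normal form $X_0^2+\lambda X_1^2$ the conditions ``$z$ is exterior'' and ``$\cZ(f)$ splits into two $\bF_q$-lines'' are both equivalent to $-\lambda\in\Box$. Your polar computation is consistent with Lemma~\ref{lem:extcriteria}, where the only nonzero principal minor of $\diag(1,\lambda,0)$ gives $-|M_{33}|=-\lambda$.

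One imprecision is worth fixing: the map you use is not literally the paper's $\delta^*$. As defined, $\delta^*$ places the full coefficient $a_{ij}$ (rather than $a_{ij}/2$) in the off-diagonal entries of $M_{\delta^*(\cC)}$, so $M_{\delta^*(\cC)}$ is not the Gram matrix of $\cC$; for instance $f=(X_0+X_1)^2$ has Gram matrix of rank $1$, yet $\delta^*(\cZ(f))$ is a rank-two point whenever $\operatorname{char}\bF_q\neq 3$. Consequently the paper's $\delta^*$ neither preserves rank nor intertwines the $\PGL(3,q)$-action on conics with the $K$-action. What your argument actually requires is the Gram-matrix correspondence $\cC\mapsto z$ with $M_z$ equal to the matrix of the associated bilinear form (a bijection since $q$ is odd), which \emph{is} equivariant up to $A\mapsto A^{-T}$ and does carry rank-two conics onto $\cP_2$; with that substitution, everything you wrote goes through, and your chosen diagonal representative is unaffected by the discrepancy. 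You could also shortcut the matching step entirely: any equivariant correspondence preserves orbit lengths, and
\[
|\cP_{2,e}|=|\cH_{2,e}|=\tfrac{1}{2}q(q+1)(q^2+q+1), \qquad |\cP_{2,i}|=|\cH_{2,i}|=\tfrac{1}{2}q(q-1)(q^2+q+1),
\]
so the two pairs cannot be matched in any other way.
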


\begin{remark}
\textnormal{ 
If $\bF_q$ is replaced by an algebraically closed field then each of $\cP_2$ and $\cH_2$ form one $K$-orbit. This is the origin of the complications which arise when working over finite fields.
}
\end{remark}

The $K$-orbits on lines in $\PG(5,q)$ were determined in \cite{LaPo2017}. 
For $q$ odd they are given by Theorem~\ref{linesThm} (with representatives listed in Table~\ref{table:lines}). 
The classification of $K$-orbits of solids in $\PG(5,q)$ then follows from Lemma~\ref{lem:duality}.

\begin{theorem} \label{linesThm}
There are 15 $K$-orbits on lines in $\PG(5,q)$, $q$ odd. 
Representatives of these orbits are listed in Table~\ref{table:lines}.
\end{theorem}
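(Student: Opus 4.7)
The plan is to recognise that a line in $\PG(5,q)$ is the projectivisation of a pencil of symmetric $3\times 3$ matrices, and via the map $\delta^*$ of Section~\ref{ss:V} this corresponds to a pencil of conics in $\PG(2,q)$; moreover, by construction, the $K$-action on lines matches projective equivalence of pencils of conics. Thus the $K$-orbits of lines are in bijection with projective equivalence classes of pencils of conics in $\PG(2,q)$, which were classified by Dickson \cite{Dickson1908}. Since the theorem as stated is already established in \cite{LaPo2017}, one may in principle simply invoke that work; the natural independent route is outlined below.

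First, I would sort lines $\ell \subset \PG(5,q)$ by their intersection with the Veronese surface $\cV(\bF_q)$ and with the secant variety (equivalently, with $\cP_2$). The rank-$1$ points of $\ell$ correspond to repeated lines in the pencil, while the points of rank at most $2$ correspond to the singular conics and form the zero locus of the cubic form $\det(\lambda M_1 + \mu M_2)$ on $\ell$ (for any two spanning matrices $M_1,M_2$ of $\ell$). A first-level partition is therefore given by the factorisation type of this determinantal cubic over $\bF_q$: identically zero; a triple root; a double and a simple root; three distinct roots in $\bF_q$; one root in $\bF_q$ and an irreducible quadratic factor; or an irreducible cubic. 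A second-level refinement then uses the Jordan-type structure associated to the pencil (the symmetric analogue of Kronecker--Weierstrass normal form).

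Within each case I would reduce to a canonical pair of symmetric matrices by using the $K$-action on the underlying space $\bF_q^3$. This introduces additional sub-orbits precisely when the reduction depends on the $\bF_q^\times/(\bF_q^\times)^2$-class of certain coefficients appearing in the normal form; this is where finite-field phenomena enlarge what would otherwise be a much shorter list over an algebraically closed field. Non-equivalence of the resulting $15$ representatives can then be certified by computing projective invariants, most importantly the point-orbit distribution along $\ell$ (counts of rank-$1$, interior rank-$2$, exterior rank-$2$, and rank-$3$ points) together with the intersection pattern of $\ell$ with $\cV(\bF_q)$ over both $\bF_q$ and $\bF_{q^2}$. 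The main obstacle is the careful bookkeeping required to keep the square/non-square dichotomies and the Frobenius-orbit structure of roots lying in $\bF_{q^2}$ or $\bF_{q^3}$ straight, while ensuring that the analysis is uniform across all odd prime powers $q$.
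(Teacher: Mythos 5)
Your proposal matches the paper: Theorem~\ref{linesThm} is not proved in this paper at all but is imported from \cite{LaPo2017}, so invoking that reference --- your primary suggestion --- is exactly what the authors do. Your additional outline (stratify by the factorisation type of the determinantal binary cubic, refine via symmetric Kronecker normal forms and square-class data, and separate the resulting representatives by point-orbit distributions) is a reasonable description of how such a classification is carried out, but it remains a plan rather than a proof and is not required here.
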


\begin{remark} \label{remOijNotation}
{\rm 
The notation for the line orbits, namely $o_i$ for $i\in \{5,6,9,10,12,16,17\}$ and $o_{i,j}$ for $i\in \{8,13,14,15\}$ and $j\in \{1,2\}$, is used for consistency with \cite[Table 2]{LaPo2017}. 
To be more specific, in \cite{LaPo2017} the orbits are denoted by $o_i$ for each $i$ as above, but when $i\in \{8,13,14,15\}$ there are in fact two `$o_i$' orbits. 
(The reason for the `$o_i$' notation itself is explained in \cite{LaPo2017}.) 
In the present paper, we instead explicitly label these orbits as $o_{i,j}$ where $j \in \{1,2\}$. 
In the case $i=15$, the representatives given in Table~\ref{table:lines} correspond to the representatives in column $j \in \{1,2\}$ of \cite[Table 2]{LaPo2017}. 
For $i \in \{8,13,14\}$, we have chosen slightly different representatives to those listed in \cite[Table 2]{LaPo2017}. 
The reason for this change is explained in Remark~\ref{remLineReps}.
}
\end{remark}

\begin{table}[!ht]
\begin{tabular}{lll}
\toprule
Orbit & Representative & Conditions \\
\midrule
$o_5$ & $\left[ \begin{matrix} \alpha & \cdot & \cdot \\ \cdot & \beta & \cdot \\ \cdot & \cdot & \cdot \end{matrix} \right]$ & \\
\addlinespace[2pt]
$o_6$ & $\left[ \begin{matrix} \alpha & \beta & \cdot \\ \beta & \cdot & \cdot \\ \cdot & \cdot & \cdot \end{matrix} \right]$ & \\
\addlinespace[2pt]
$o_{8,1}$ & $\left[ \begin{matrix} \alpha & \cdot & \cdot \\ \cdot & \beta & \cdot \\ \cdot & \cdot & -\beta \end{matrix} \right]$& \\
\addlinespace[2pt]
$o_{8,2}$ & $\left[ \begin{matrix} \alpha & \cdot & \cdot \\ \cdot & \beta & \cdot \\ \cdot & \cdot & -\varepsilon\beta \end{matrix} \right]$ 
& $\varepsilon \in \boxtimes$ \\
\addlinespace[2pt]
$o_9$ & $\left[ \begin{matrix} \alpha & \cdot & \beta \\ \cdot & \beta & \cdot \\ \beta & \cdot & \cdot \end{matrix} \right]$ & \\
\addlinespace[2pt]
$o_{10}$ & $\left[ \begin{matrix} v\alpha & \beta & \cdot \\ \beta & \alpha+u\beta & \cdot \\ \cdot & \cdot & \cdot \end{matrix} \right]$ 
& ($*$) \\
\addlinespace[2pt]
$o_{12}$ & $\left[ \begin{matrix} \cdot & \alpha & \cdot \\ \alpha & \cdot & \beta \\ \cdot & \beta & \cdot \end{matrix} \right]$ & \\
\addlinespace[2pt]
		\addlinespace[26pt]
		\addlinespace[2pt]
		&& \\
		\bottomrule
		\end{tabular}
		\begin{tabular}{lll}
		\toprule
		Orbit & Representative & Conditions \\
		\midrule
$o_{13,1}$ & $\left[ \begin{matrix} \cdot & \alpha & \cdot \\ \alpha & \beta & \cdot \\ \cdot & \cdot & -\beta \end{matrix} \right]$ & \\
\addlinespace[2pt]
$o_{13,2}$ & $\left[ \begin{matrix} \cdot & \alpha & \cdot \\ \alpha & \beta & \cdot \\ \cdot & \cdot & -\varepsilon\beta \end{matrix} \right]$ 
& $\varepsilon \in \boxtimes$ \\
\addlinespace[2pt]
$o_{14,1}$ & $\left[ \begin{matrix} \alpha & \cdot & \cdot \\ \cdot & -(\alpha+\beta) & \cdot \\ \cdot & \cdot & \beta \end{matrix} \right]$ & \\
\addlinespace[2pt]
$o_{14,2}$ & $\left[ \begin{matrix} \alpha & \cdot & \cdot \\ \cdot & -\varepsilon(\alpha+\beta) & \cdot \\ \cdot & \cdot & \beta \end{matrix} \right]$ 
& $\varepsilon \in \boxtimes$ \\
\addlinespace[2pt]
$o_{15,1}$ & $\left[ \begin{matrix} v\beta & \alpha & \cdot \\ \alpha & u\alpha+\beta & \cdot \\ \cdot & \cdot & \alpha \end{matrix} \right]$
& $-v \in \Box$, ($*$) \\
\addlinespace[2pt]
$o_{15,2}$ & $\left[ \begin{matrix} v\beta & \alpha & \cdot \\ \alpha & u\alpha+\beta & \cdot \\ \cdot & \cdot & \alpha \end{matrix} \right]$
& $-v \in \boxtimes$, ($*$) \\
\addlinespace[2pt]
$o_{16}$ & $\left[ \begin{matrix} \cdot & \cdot & \alpha \\ \cdot & \alpha & \beta \\ \alpha & \beta & \cdot \end{matrix} \right]$ & \\ 
\addlinespace[2pt]
$o_{17}$ & $\left[ \begin{matrix} v^{-1}\alpha & \beta & \cdot \\ \beta & u \beta - w \alpha & \alpha \\ \cdot & \alpha & \beta \end{matrix} \right]$ 
& ($**$) \\
\addlinespace[2pt]
\bottomrule
\end{tabular}
\caption{Matrix representatives of the 15 line orbits in $\PG(5,q)$, $q$ odd, under the action of $K=\PGL(3,q) \le \PGL(6,q)$ defined in Section~\ref{ss:PGL3q-action}. 
Here $\cdot$ denotes $0$, $(\alpha,\beta)$ ranges over all non-zero values in $\mathbb{F}_q^2$, and $\Box$ (respectively, $\boxtimes$) is the set of squares (respectively, non-squares) in $\mathbb{F}_q$. 
Condition~($*$) is: $v\lambda^2+uv\lambda - 1 \neq 0$ for all $\lambda \in \mathbb{F}_q$. 
Condition~($**$) is: $\lambda^3+w \lambda^2- u \lambda+ v \neq 0$ for all $\lambda \in \mathbb{F}_q$.
}
\label{table:lines}
\end{table}

The next lemma gives a useful condition for determining whether a rank-$2$ point in $\PG(2,q)$ is an exterior point or an interior point (that is, whether it belongs to the orbit $\cP_{2,e}$ or the orbit $\cP_{2,i}$). 
Here we use the notation $M_{ij}(A)$ to mean the matrix obtained from $A$ by removing the $i$th row and the $j$th column, and $|\cdot|$ denotes the determinant.

\begin{lemma}\label{lem:extcriteria}
A point $y\in \PG(5,q)$ belongs to $\cP_{2,e}$ if and only if $|M_y|=0$ and 
$-|M_{11}(M_y)|$, $-|M_{22}(M_y)|$, $-|M_{33}(M_y)|$
are all squares with at least one being non-zero.
\end{lemma}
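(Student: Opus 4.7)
My plan is to reformulate the conditions via the adjugate $\operatorname{adj}(M_y)$ of $M_y$, and then check the resulting $K$-invariant on a canonical form. The key algebraic observation is that for a symmetric $3\times 3$ matrix $M_y$ with $|M_y|=0$, the identity $M_y\operatorname{adj}(M_y)=|M_y|\,I=0$ forces $\operatorname{adj}(M_y)$ to have rank at most $1$, with rank exactly $1$ if and only if $\operatorname{rank}(M_y)=2$. A nonzero symmetric rank-$1$ matrix has the form $c\,ww^T$ for some $c\in\bF_q^*$ and $w\in\bF_q^3\setminus\{0\}$, with $c$ well-defined modulo $(\bF_q^*)^2$. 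Since the diagonal entries of $\operatorname{adj}(M_y)$ are precisely the principal minors $|M_{ii}(M_y)|=cw_i^2$, the two conditions in the lemma become: $\operatorname{rank}(M_y)=2$ and $-c$ is a square in $\bF_q$ (using that $-cw_i^2$ is a square precisely when $-c$ is, and is trivially a square when $w_i=0$).

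Next I would observe that ``$-c$ is a square'' is $K$-invariant on the set of rank-$2$ points. Under the congruence $M_y\mapsto AM_yA^T$, the adjugate transforms as $\operatorname{adj}(AM_yA^T)=(\det A)^2\,A^{-T}\operatorname{adj}(M_y)\,A^{-1}$, so $c$ changes only by a square, and scaling $M_y$ by $\lambda$ multiplies $c$ by $\lambda^2$. Since the $K$-orbits on rank-$2$ points are precisely $\cP_{2,e}$ and $\cP_{2,i}$, it therefore suffices to verify the equivalence between ``$-c$ is a square'' and $y\in\cP_{2,e}$ on a single rank-$2$ representative, which we may take in the diagonal form $M=\diag(a,b,0)$ with $a,b\in\bF_q^*$.

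To close the argument, for $M=\diag(a,b,0)$ one has $\operatorname{adj}(M)=ab\cdot e_3e_3^T$, so $c=ab$ and $-c=-ab$. The unique conic plane through $[M]$ is spanned by $e_1e_1^T$, $e_1e_2^T+e_2e_1^T$, $e_2e_2^T$; in the natural coordinates $(x,y,z)$ for the $(1,1)$, $(1,2)$ and $(2,2)$ entries of a matrix in this plane, the conic $\cC_{[M]}$ is $xz=y^2$, and its tangent at the point $(s^2,st,t^2)$ has equation $t^2x-2sty+s^2z=0$. The point $(a,0,b)$ lies on such a tangent if and only if $as^2+bt^2=0$ admits a nontrivial solution in $\bF_q$, that is, if and only if $-ab$ is a square in $\bF_q$. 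By definition, this means $[M]\in\cP_{2,e}$ if and only if $-ab=-c$ is a square, which together with the previous steps yields the lemma.

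The main obstacle is this final step: one must match the algebraic invariant ``$-c$ is a square'' from the adjugate formulation with the geometric notion of lying on a tangent to $\cC_{[M]}$ in the unique conic plane. The explicit tangent computation on the diagonal canonical form is what makes the two invariants visibly coincide, both being controlled by the square class of $-ab$.
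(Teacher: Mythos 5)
Your proof is correct, and it reaches the conclusion by a somewhat different route than the paper. The paper's proof writes a rank-$2$ point as $M_y = XA_\tau X^T$ with $A_\tau = \diag(1,-\tau,0)$, simply \emph{asserts} that $\tau=1$ gives an exterior point and $\tau=\epsilon$ a non-square gives an interior one, and then computes directly that $|M_{ii}(M_y)| = -\tau\,|M_{i3}(X)|^2$, from which the criterion follows. You instead package the three principal minors as the diagonal of $\operatorname{adj}(M_y) = c\,ww^T$, observe that the stated condition is exactly ``$\operatorname{rank}(M_y)=2$ and $-c \in \Box$'', check that the square class of $c$ is a congruence invariant via $\operatorname{adj}(AM_yA^T)=(\det A)^2A^{-T}\operatorname{adj}(M_y)A^{-1}$, and then identify the invariant geometrically on $\diag(a,b,0)$ by an explicit tangent-line computation in the conic plane. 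The two arguments rest on the same underlying fact (the square class of the complementary $2\times 2$ minors is preserved under congruence), but your adjugate formulation is more structural, and your final step actually \emph{proves} the correspondence between ``$-ab \in \Box$'' and lying on a tangent to $\cC_z$ --- a point the paper's proof leaves as an unproved ``Note that $z_\tau$ is exterior for $\tau=1$ and interior for $\tau=\epsilon$.'' The price is a slightly longer chain of reductions; the paper's calculation is shorter but less self-contained on the geometric side.
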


\proof
Note that if the matrix $M_y$ defined by \eqref{eq:symmetricMatrix} satisfies the given conditions then $y$ is a point of rank $2$. 
Consider the rank-$2$ point $z_\tau$ with coordinates $(1,0,-\tau,0,0,0)$, where $\tau \in \bF_q\setminus\{0\}$, and write
\[
A_\tau=M_{z_\tau}=\npmatrix{1&0&0\\0&-\tau&0\\0&0&0}. 
\]
Note that $z_\tau$ is exterior for $\tau=1$ and interior for $\tau=\epsilon$ a non-square in $\bF_q$. 
A rank-$2$ point $y$ is therefore exterior (respectively, interior) if and only if there exists an invertible matrix $X$ such that $M_y=XA_{1}X^T$ (respectively, $M_y=XA_{\epsilon}X^T$). 
A straightforward calculation shows that for $M_y=XA_{\tau}X^T$ we have
\[
|M_{11}(M_y)| = -\tau|M_{13}(X)|^2, \quad
|M_{22}(M_y)| = -\tau|M_{23}(X)|^2, \quad
|M_{33}(M_y)| = -\tau|M_{33}(X)|^2,
\]
completing the proof. \qed

\section{Planes in $\PG(5,q)$, $q$ odd}\label{sec:planes}

We now collect a few final preliminaries before beginning the proof of Theorem~\ref{mainthm}. 
Recall that there are four $K$-orbits of points in $\PG(5,q)$, namely: $\cP_1$, the points of rank $1$; $\cP_{2,e}$, the exterior rank-$2$ points; $\cP_{2,i}$, the interior rank-$2$ points; and $\cP_3$, the points of rank $3$.

\begin{definition} \label{def:dist}
{\rm
Let $W$ be a subspace of $\PG(5,q)$. 
The {\em point-orbit distribution} of $W$ is the list $[n_1,n_2,n_3,n_4]$, where 
\begin{itemize}
\item $n_1$ is the number of rank-$1$ points contained in $W$,
\item $n_2$ is the number of exterior rank-$2$ points contained in $W$, 
\item $n_3$ is the number of interior rank-$2$ points contained in $W$, 
\item $n_4$ is the number of rank-$3$ points contained in $W$. 
\end{itemize}
The {\em rank distribution} of $W$ is the list $[m_1,m_2,m_3]$ where $m_i$ is the number of rank-$i$ points of $\PG(5,q)$ contained in $W$, for $i \in \{1,2,3\}$. 
In other words, $[m_1,m_2,m_3] = [n_1,n_2+n_3,n_4]$. 
Given $i \in \{1,2,3\}$, we say that $W$ has {\em constant rank} $i$ if $m_j = 0$ for all $j \neq i$. 
}
\end{definition}

The point-orbit distributions of the $K$-orbits of lines in $\PG(5,q)$ were previously determined by the first and second authors \cite[Tables~1 and~4]{LaPo2017}. 
They are summarised for reference in Table~\ref{table:line-rank-dist}. 
The point-orbit distributions of the $K$-orbits of planes are determined in a series of lemmas in Sections~\ref{sec:three_rank_one_points}--\ref{sec:final} and are summarised in Table~\ref{table:pt-orbit-dist}.

\begin{remark} \label{remLineReps}
{\rm 
As mentioned in Remark~\ref{remOijNotation}, we have chosen different representatives for the line orbits $o_{i,1}$ and $o_{i,2}$ in the cases $i \in \{8,13,14\}$, compared with \cite[Table 2]{LaPo2017}. 
The reason for this is that, in the representatives in \cite[Table 2]{LaPo2017}, the numbers of interior and exterior points of rank $2$ depend on whether $-1$ is a square in $\mathbb{F}_q$. 
This is explained in \cite[Table 4]{LaPo2017}. 
For the representatives given in Table~\ref{table:lines}, the point-orbit distributions are independent of $q$. 
}
\end{remark}

The following notation is used in Table~\ref{table:main} and throughout the proof of Theorem~\ref{mainthm}.

\begin{table}[!t]
\begin{tabular}{ll}
\toprule
Orbit & Point-orbit distribution \\
\midrule
$o_5$ & $[2, \frac{q-1}{2}, \frac{q-1}{2}, 0]$ \\
\addlinespace[2pt]
$o_6$ & $[1, q, 0, 0]$ \\
\addlinespace[2pt]
$o_{8,1}$ & $[1, 1, 0, q-1]$ \\
\addlinespace[2pt]
$o_{8,2}$ & $[1, 0, 1, q-1]$ \\
\addlinespace[2pt]
$o_9$ & $[1, 0, 0, q]$ \\
\addlinespace[2pt]
$o_{10}$ & $[0, \frac{q+1}{2}, \frac{q+1}{2}, 0]$ \\
\addlinespace[2pt]
$o_{12}$ & $[0, q+1, 0, 0]$ \\
\addlinespace[2pt]
$o_{13,1}$ & $[0, 2, 0, q-1]$ \\
\addlinespace[2pt]
$o_{13,2}$ & $[0, 1, 1, q-1]$ \\
\addlinespace[2pt]
$o_{14,1}$ & $[0, 3, 0, q-2]$ \\
\addlinespace[2pt]
$o_{14,2}$ & $[0, 1, 2, q-2]$ \\
\addlinespace[2pt]
$o_{15,1}$ & $[0, 1, 0, q]$ \\
\addlinespace[2pt]
$o_{15,2}$ & $[0, 0, 1, q]$ \\
\addlinespace[2pt]
$o_{16}$ & $[0, 1, 0, q]$ \\
\addlinespace[2pt]
$o_{17}$ & $[0, 0, 0, q+1]$ \\
\addlinespace[2pt]
\bottomrule
\end{tabular}
\caption{Point-orbit distributions of $K$-orbits of lines in $\PG(5,q)$, $q$ odd.}
\label{table:line-rank-dist}
\end{table}

\begin{table}[!t]
\begin{tabular}{lll}
\toprule
Orbit & Point-orbit distribution & Condition \\
\midrule
$\Sigma_1$ & $[q+1, q(q+1)/2, q(q-1)/2, 0]$ & \\
\addlinespace[2pt]
$\Sigma_2$ & $[3, 3(q-1)/2, 3(q-1)/2, q^2-2q+1]$ & \\
\addlinespace[2pt]
$\Sigma_3$ & $[2, (3q-1)/2, (q-1)/2, q^2-q]$ & \\
\addlinespace[2pt]
$\Sigma_4$ & $[2, (3q-1)/2, (q-1)/2, q^2-q]$ & \\
\addlinespace[2pt]
$\Sigma_5$ & $[2, q-1, q-1, q^2-q+1]$ & \\
\addlinespace[2pt]
$\Sigma_6$ & $[1, (q+1)/2, (q+1)/2, q^2-1]$ & \\
\addlinespace[2pt]
$\Sigma_7$ & $[1, q^2+q, 0, 0]$ & \\
\addlinespace[2pt]
$\Sigma_8$ & $[1, 2q, 0, q^2-q]$ & \\
\addlinespace[2pt]
$\Sigma_9$ & $[1, 2q, 0, q^2-q]$ & \\
\addlinespace[2pt]
$\Sigma_{10}$ & $[1, q, q, q^2-q]$ & \\
\addlinespace[2pt]
$\Sigma_{11}$ & $[1, q, 0, q^2]$ & \\
\addlinespace[2pt]
$\Sigma_{12}$ & $[1, (q-1)/2, (q-1)/2, q^2+1]$ & \\
\addlinespace[2pt]
$\Sigma_{13}$ & $[1, (q+1)/2, (q+1)/2, q^2-1]$ & \\
\addlinespace[2pt]
$\Sigma_{14}$ & $[1, (q\mp 1)/2, (q\mp 1)/2, q^2\pm 1]$ & $q \equiv \pm 1 \pmod 3$ \\
\addlinespace[2pt]
$\Sigma_{14}'$ & $[1, q, 0, q^2]$ & $q \equiv 0 \pmod 3$ \\
\addlinespace[2pt]
$\Sigma_{15}$ & $[1, q, 0, q^2]$ & \\
\addlinespace[2pt]
\bottomrule
\end{tabular}
\caption{Point-orbit distributions of $K$-orbits of planes in $\PG(5,q)$, $q$ odd.}
\label{table:pt-orbit-dist}
\end{table}

\begin{notation} \label{orbitNotation}
{\rm 
If a plane in $\PG(5,q)$ is spanned by points $x,y,z$ then we represent its $K$-orbit by the matrix $\alpha M_x +\beta M_y + \gamma M_z$, with the convention that zeroes are replaced by dots and the understanding that $(\alpha,\beta,\gamma)$ ranges over all non-zero values in $\mathbb{F}_q^3$.
For example, the $K$-orbit $\Sigma_1$ in equation~\eqref{sigma1and2} below (and the first row of Table~\ref{table:main}) is the $K$-orbit of the plane spanned by the points $x=(1,0,0,0,0,0)$, $y=(0,0,0,1,0,0)$ and $z=(0,1,0,0,0,0)$.
}
\end{notation}

We also note the following preliminary lemmas.

\begin{lemma}
Let $A$ be a $3\times 3$ matrix whose entries are linear forms in variables $x,y,z \in \mathbb{F}_q$. 
Then the points $(a,b,c)$ for which $A$ evaluated at $(a,b,c)$ has rank $1$ are singular points of the cubic $\cZ(|A|)$.
\end{lemma}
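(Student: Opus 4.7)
The plan is to reduce the statement to two observations: first, that a point $(a,b,c)$ is singular on $\cZ(|A|)$ precisely when $|A|$ vanishes there together with all three of its partial derivatives; and second, that the partial derivative of $|A|$ with respect to any of the variables can be expanded linearly in the cofactors of $A$.

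First I would note that if $A(a,b,c)$ has rank $1$ then in particular $|A(a,b,c)|=0$, so $(a,b,c)\in \cZ(|A|)$. It therefore remains only to show that $\partial |A|/\partial x$, $\partial |A|/\partial y$ and $\partial |A|/\partial z$ all vanish at $(a,b,c)$. By the multilinearity of the determinant in its entries, for any variable $t\in\{x,y,z\}$ we have the identity
\[
\frac{\partial |A|}{\partial t} \;=\; \sum_{i,j=1}^{3} \frac{\partial a_{ij}}{\partial t}\; C_{ij}(A),
\]
where $a_{ij}$ denotes the $(i,j)$-entry of $A$ and $C_{ij}(A)$ denotes the corresponding cofactor (a signed $2\times 2$ minor of $A$, hence itself a polynomial in $x,y,z$). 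This identity is purely formal and therefore valid in any characteristic.

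Now I would use the assumption on the rank: if the evaluated matrix $A(a,b,c)$ has rank $1$, then by definition every $2\times 2$ minor of $A(a,b,c)$ is zero, so $C_{ij}(A)$ evaluated at $(a,b,c)$ is zero for all $i,j$. Substituting $(x,y,z)=(a,b,c)$ into the displayed identity therefore yields $(\partial |A|/\partial t)(a,b,c)=0$ for each $t\in\{x,y,z\}$. Combined with the observation $|A(a,b,c)|=0$, this shows that $(a,b,c)$ is a singular point of $\cZ(|A|)$, completing the argument.

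I do not anticipate any real obstacle here: the entire argument is a direct application of the cofactor expansion of the derivative of a determinant, together with the elementary fact that rank $1$ forces all $2\times 2$ minors to vanish. The only thing worth a brief remark in the write-up is that the formula for $\partial|A|/\partial t$ is a formal polynomial identity, so its validity is unaffected by the characteristic of $\mathbb{F}_q$ (which matters because the paper explicitly allows $q\equiv 0\pmod 3$, and one might otherwise worry about characteristic obstructions when differentiating a cubic).
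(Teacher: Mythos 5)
Your proof is correct and follows essentially the same route as the paper: both rest on Jacobi's formula for $\partial|A|/\partial t$ together with the observation that a rank-one $3\times 3$ matrix has vanishing adjugate (all cofactors zero). Your cofactor-expansion form of the identity is in fact slightly cleaner than the paper's, which writes $\partial|M|/\partial x=|M|\,\mathrm{tr}(M^{-1}\partial M/\partial x)$ and thus implicitly appeals to the singular case by polynomial identity, whereas your version avoids $M^{-1}$ altogether.
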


\begin{proof}
A $3 \times 3$ matrix has rank $1$ if and only if its adjugate is zero.
The partial derivative of the determinant of a matrix $M$ with polynomial entries with respect to a variable $x$ is given by
\[
\partial |M|/\partial x=|M| {\mathrm{tr}}(M^{-1}\partial M/\partial x).
\]
It follows that if $A$ evaluated at $(a,b,c)$ has rank $1$ then all of $\partial |A|/\partial x$, $\partial |A|/\partial y$, $\partial |A|/\partial z$ evaluated at $(a,b,c)$ are zero. This implies that $(a,b,c)$ is a singular point of $\cZ(|A|)$.
\end{proof}

For the next lemma, recall the notation for the $K$-orbits of lines in $\PG(5,q)$ from Theorem~\ref{linesThm}.

\begin{lemma} \label{lem:o6_plane} 
If $\pi$ is a plane in $\PG(5,q)$ containing a line of type $o_6$ and a point of rank $3$, then the cubic of points of rank at most $2$ in $\pi$ is either (i) the union of a non-degenerate conic and a tangent line, (ii) the union of a line and a double line, or (iii) a triple line.
\end{lemma}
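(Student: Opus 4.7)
The plan is to reduce to a convenient representative of the $o_6$-orbit and compute the cubic $\cZ(|A|)$ explicitly. By the action of $K$, we may assume that $\ell$ is the line of matrices of the form
\[
\npmatrix{\alpha & \beta & \cdot \\ \beta & \cdot & \cdot \\ \cdot & \cdot & \cdot},
\]
whose unique rank-$1$ point is $p = (1:0:0:0:0:0)$. Since $\pi$ contains a rank-$3$ point, we may write $\pi = \langle \ell, z\rangle$ for some rank-$3$ point $z$ with coordinates $(z_0,\ldots,z_5)$. A generic point of $\pi$ then corresponds to $A = \alpha M_x + \beta M_y + \gamma M_z$, where $x,y$ span $\ell$. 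Because $\ell$ contains no rank-$3$ points, the linear form $\gamma$ divides $|A|$, and writing $|A| = \gamma \cdot Q$ exhibits the cubic as the union of the line $\ell = \cZ(\gamma)$ and a residual conic $\cZ(Q)$.

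A direct expansion yields
\[
Q = -z_5\,\beta^2 \,+\, 2(z_2 z_4 - z_1 z_5)\,\beta\gamma \,+\, (z_3 z_5 - z_4^2)\,\alpha\gamma \,+\, |M_z|\,\gamma^2,
\]
and the discriminant of $Q$ is $\tfrac{1}{4}\,z_5(z_3 z_5 - z_4^2)^2$. In the main case $z_5 \neq 0$, one has $Q|_\ell = -z_5\beta^2$, so by Bezout $\cZ(Q)$ meets $\ell$ with intersection multiplicity $2$ at $p$. When additionally $z_3 z_5 - z_4^2 \neq 0$, the conic $\cZ(Q)$ is non-degenerate, and $\ell$ is its tangent at $p$: case (i).

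The crucial technical step is the subcase $z_5 \neq 0$, $z_3 z_5 = z_4^2$, in which $Q$ is degenerate. One must rule out the possibility that $\cZ(Q)$ is a pair of \emph{distinct} lines through $p$, which would yield three concurrent lines and contradict the statement. This is where the rank-$3$ hypothesis on $z$ enters: the condition $z_3 z_5 = z_4^2$ with $z_5 \neq 0$ forces the lower-right $2 \times 2$ block of $M_z$ to have rank $1$, and a short expansion of $|M_z|$ gives the identity
\[
z_5 \cdot |M_z| \;=\; -(z_1 z_5 - z_2 z_4)^2.
\]
This says precisely that the nontrivial $2 \times 2$ minor of the discriminant matrix of $Q$ vanishes, so $Q$ has rank exactly $1$. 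Hence $\cZ(Q)$ is a double line through $p$, necessarily distinct from $\ell$ since $z_5 \neq 0$; this yields case (ii).

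Finally, when $z_5 = 0$ the form $Q$ is itself divisible by $\gamma$, so $|A| = \gamma^2 \cdot R$ for some linear form $R$, nonzero by the rank-$3$ condition $|M_z| \neq 0$. One checks that $R$ is proportional to $\gamma$ precisely when $z_4 = 0$, giving the triple line $\ell^3$ of case (iii); otherwise $\cZ(R)$ is a line distinct from $\ell$ and we obtain case (ii).
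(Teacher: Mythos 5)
Your proposal is correct and follows essentially the same route as the paper: both reduce to the standard $o_6$ representative, factor the determinant cubic as $\gamma\cdot Q$, and split into cases according to whether the residual conic $Q$ is non-degenerate (tangency at the rank-one point via $Q|_{\gamma=0}=-z_5\beta^2$), a double line, or again divisible by $\gamma$. The only cosmetic difference is that you keep $z$ general rather than normalising $z_0=z_1=0$, which is why you need the identity $z_5|M_z|=-(z_1z_5-z_2z_4)^2$ in the degenerate subcase; in the paper's normalised coordinates this is the explicit factorisation $f=-d^{-1}(dX_2-acX_3)^2$.
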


\begin{proof}
By Table~\ref{table:line-rank-dist}, a line of type $o_6$ has rank distribution $[1,q,0]$.
Suppose that $\pi=\pt{x,y,z}$ with $\pt{x,y}$ a line of type $o_6$, $\operatorname{rank}(x)=1$, $\operatorname{rank}(y)=2$ and $\operatorname{rank}(z)=3$. 
Based on the $o_6$ representative in Table~\ref{table:lines}, we may assume that $x=(1,0,0,0,0,0)$ and $y=(0,1,0,0,0,0)$. 
The point $z$ then has coordinates $(0,0,a,b,c,d)$ for some $a,b,c,d \in \bF_q$.
The cubic $Q$ of points of rank at most $2$ has equation $X_3 f(X_1,X_2,X_3)=0$, where
\[
f(X_1,X_2,X_3)=(bd-c^2)X_1X_3-dX_2^2+2acX_2X_3-a^2bX_3^2.
\]
If the conic $\cC=\cZ(f)$ is non-degenerate then $d\neq 0$ and the line $\pt{x,y}$ is a tangent to $\cC$. 
If $\cC$ is degenerate then $d(bd-c^2)=0$.
If $d=0$ then $f(X_1,X_2,X_3)=X_3((bd-c^2)X_1+2acX_2-a^2bX_3)$ and $\cC$ is the union of two lines, at least one of which is $\pt{x,y}$.
If $d\neq0$ and $bd-c^2=0$ then $f(X_1,X_2,X_3)=-d^{-1}(dX_2-acX_3)^2$, and $\cC$ is a double line.
\end{proof}

Before finally proceeding to the classification of $K$-orbits of planes in $\PG(5,q)$, we prove the following lemma, which establishes the non-existence of planes with certain rank distributions.

\begin{lemma}\label{lem:b<q}
There are no planes in $\PG(5,q)$ with rank distribution $[2,n_2,n_3]$ where $n_2<q$.
\end{lemma}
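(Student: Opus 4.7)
The plan is to assume for contradiction that such a plane $\pi$ exists, reduce $\pi$ to a canonical form using the $K$-action, and compute the cubic of rank-$\le 2$ points directly. Let $x,y$ be the two rank-$1$ points of $\pi$. The line $\ell=\pt{x,y}$ contains two rank-$1$ points, so by Theorem~\ref{linesThm} it must be of type $o_5$, and Table~\ref{table:line-rank-dist} then tells us $\ell$ already carries $q-1$ rank-$2$ points. Hence $n_2\ge q-1$, and the hypothesis $n_2<q$ forces $n_2=q-1$; equivalently, every rank-$2$ point of $\pi$ lies on $\ell$. The strategy is to show that this scenario is never realised.

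Since $K\cong\PGL(3,q)$ acts $2$-transitively on the rank-$1$ locus $\cV(\bF_q)$, I would normalise so that $x=\nu(1,0,0)$ and $y=\nu(0,1,0)$, that is, $M_x$ and $M_y$ have a single $1$ in position $(1,1)$ and $(2,2)$ respectively and $0$ elsewhere. Picking any third point $z\in\pi\setminus\ell$ and subtracting suitable multiples of $x$ and $y$, we may further assume
\[
M_z = \left[\begin{matrix} 0 & b & c \\ b & 0 & e \\ c & e & f \end{matrix}\right].
\]
A direct expansion then yields $\det M(\alpha,\beta,\gamma)=\gamma\cdot Q(\alpha,\beta,\gamma)$, where
\[
Q(\alpha,\beta,\gamma)=f\alpha\beta-e^2\alpha\gamma-c^2\beta\gamma+(2bce-b^2f)\gamma^2,
\]
and a short computation shows the discriminant of $Q$ equals $\tfrac14 f(bf-ce)^2$. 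Thus the cubic of rank-$\le 2$ points in $\pi$ is the union of $\ell=\cZ(\gamma)$ and the (possibly degenerate) conic $\cZ(Q)$.

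The case analysis proceeds on $f$ and $bf-ce$. If $f=c=e=0$ (with $b\ne 0$ so $z\notin\ell$), then $Q\equiv 0$ and $\pi$ is the orbit $\Sigma_1$ plane, which contains $q+1$ rank-$1$ points, contradicting $n_1=2$. If $f=0$ and $(c,e)\ne(0,0)$, then $Q=\gamma L$ for the non-zero linear form $L=-e^2\alpha-c^2\beta+2bce\gamma$, and inspecting the $2\times 2$ minors of $M(\alpha,\beta,1)$ shows every point of $\cZ(L)\setminus\ell$ has rank exactly $2$, giving $n_2\ge (q-1)+q$. If $f\ne 0$ and $bf=ce$, then $fQ=(f\alpha-c^2\gamma)(f\beta-e^2\gamma)$, and evaluating $M$ at the third triangle vertex $(\alpha,\beta,\gamma)=(c^2,e^2,f)$ produces the rank-$1$ matrix $\mathbf v\mathbf v^{T}$ with $\mathbf v=(c,e,f)^{T}$, contradicting $n_1=2$. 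Finally, if $f\ne 0$ and $bf\ne ce$, then $\cZ(Q)$ is a non-degenerate conic which passes through $x$ and $y$ (since $Q|_{\gamma=0}=f\alpha\beta$), and a rank-$1$ point with $\gamma\ne 0$ would force both $bf=ce$ and $c=e=0$; hence all $q-1$ points of $\cZ(Q)$ off $\ell$ have rank $2$, giving $n_2\ge(q-1)+(q-1)=2q-2$. In every admissible case $n_2\ge 2q-2\ge q$ (as $q\ge 3$), contradicting $n_2<q$.

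The main obstacle is the bookkeeping in the degenerate conic cases: one must verify that the ``extra'' intersections produced by $\cZ(Q)$ are actually rank-$2$ points (and not silently rank-$1$ or rank-$3$). This is handled uniformly by the observation that a point of $\pi$ with $\gamma\ne 0$ has rank $1$ precisely when both $bf-ce=0$ and $c=e=0$, which is either excluded by the hypothesis of the subcase or already produces the contradiction $n_1\ge 3$.
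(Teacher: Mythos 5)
Your proof is correct and follows essentially the same route as the paper's: both use the secant line $\langle x,y\rangle$ to force $n_2=q-1$ (so that every rank-$2$ point lies on that line), normalise coordinates so that $x,y$ are the first two standard Veronesean points, and then derive a contradiction from the determinantal cubic $\gamma\, Q(\alpha,\beta,\gamma)=0$ --- the paper by arguing abstractly that this cubic must be the triple line $\gamma^3=0$ (since the rank-$1$ points are singular points of it) and comparing coefficients, you by an explicit four-way case analysis on the degeneracy of the conic factor $Q$. The only blemish is the closing sentence's claim that a point with $\gamma\neq 0$ has rank $1$ \emph{precisely} when $bf-ce=0$ and $c=e=0$: your own subcase $f\neq 0$, $bf=ce$ exhibits a rank-$1$ point $\mathbf{v}\mathbf{v}^{T}$ with $c,e$ not necessarily zero, so that ``uniform'' criterion is false as stated --- but it is also redundant, since each subcase is already correctly justified on its own ($c=e=0$ is what is needed only when $f=0$, and $bf=ce$ alone suffices when $f\neq 0$).
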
 

\begin{proof}
Consider any two distinct points $x,y \in \cV(\bF_q)$. 
Since every point on the line $\ell = \langle x,y\rangle$ has rank at most $2$ (as noted in Section~\ref{ss:V}), a plane $\pi$ through $x$ and $y$ contains at least $q-1$ points of rank $2$. 
Hence, if $\pi$ has rank distribution $[2,n_2,n_3]$ with $n_2<q$ then we must have $n_2=q-1$. 
It remains to rule out this case. 
Since all of the rank-$2$ points in $\pi$ lie on $\ell$, the cubic $Q$ of points of rank at most $2$ is the triple line $\ell$, because $x$ and $y$ are necessarily singular points of that cubic. 
Hence, if we write $\pi=\pt{x,y,z}$ then the point $z$ has rank $3$ and without loss of generality we may assume that $x=\nu(\pt{e_1})$, $y=\nu(\pt{e_2})$ and $z=(0,a,b,0,c,d)$ for some $a,b,c,d\in \bF_q$, so that $Q$ has equation $X_1X_3(dX_2-c^2X_3)+aX_3^3(ad-bc)+bX_3^2(acX_3-bX_2)=0$. 
Since this must be equivalent to $X_3^3=0$, it follows that $b=c=d=0$, contradicting $\operatorname{rank}(z)=3$. 
\end{proof}

\section{Planes spanned by three points of $\cV(\bF_q)$}\label{sec:three_rank_one_points}

\begin{notation}
{\rm 
Throughout the paper we let $e_1,e_2,e_3$ denote the standard basis vectors of $\bF_q^3$.
}
\end{notation}

Suppose that a plane $\pi$ in $\PG(5,q)$ contains three points $z_1,z_2,z_3 \in \cV(\bF_q)$, and consider their pre-images $p_1,p_2,p_3$ under the Veronese map $\nu : \PG(2,q) \rightarrow \cV(\bF_q)$. 
Since lines of $\PG(5,q)$ intersect $\cV(\bF_q)$ in at most two points, the points $z_1,z_2,z_3$ span $\pi$. 
If $p_1,p_2,p_3$ are collinear then without loss of generality $p_1=\langle e_1\rangle$, $p_2=\langle e_2\rangle$ and $\pi$ is the conic plane $\langle \cC(z_1,z_2)\rangle$. 
If $p_1,p_2,p_3$ are not collinear then without loss of generality $p_i=\langle e_i\rangle$ for $i=1,2,3$ and $\pi$ intersects $\cV(\bF_q)$ in the three points $z_1,z_2,z_3$. 
These two possibilities give us the following two $K$-orbits (respectively, with notation as established in Section~\ref{sec:planes}):
\begin{equation} \label{sigma1and2}
\Sigma_1: \left[ \begin{matrix} \alpha & \gamma & \cdot \\ \gamma & \beta & \cdot \\ \cdot & \cdot & \cdot \end{matrix} \right]
\quad \text{and} \quad
\Sigma_2 : \left[ \begin{matrix} \alpha & \cdot & \cdot \\ \cdot & \beta & \cdot \\ \cdot & \cdot & \gamma \end{matrix} \right].
\end{equation}

\begin{lemma}\label{lem:Sigma_1 and Sigma_2}
A plane belonging to the $K$-orbit $\Sigma_1$ has point-orbit distribution 
\[
[q+1,q(q+1)/2,q(q-1)/2,0],
\] 
and a plane belonging to the $K$-orbit $\Sigma_2$ has point-orbit distribution 
\[
[3,3(q-1)/2,3(q-1)/2,q^2-2q+1].
\]
\end{lemma}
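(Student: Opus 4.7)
The plan is to handle the two orbits separately, as they are essentially decoupled.

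For $\Sigma_1$, I would first observe that every representative matrix has the third row and column zero, so it has rank at most $2$, giving $n_4 = 0$ immediately. The rank-$1$ points are precisely the zeros of the only non-trivial $2 \times 2$ minor $\alpha\beta - \gamma^2$, which defines a non-degenerate conic in the $\PG(2,q)$ of triples $(\alpha:\beta:\gamma)$; this gives $n_1 = q+1$ and leaves $q^2$ rank-$2$ points. To split these into external and internal, I would apply Lemma~\ref{lem:extcriteria}: the only potentially non-vanishing minor here is $|M_{33}| = \alpha\beta - \gamma^2$, so the rank-$2$ point is external precisely when $\gamma^2 - \alpha\beta$ is a non-zero square. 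An equivalent and perhaps cleaner viewpoint is to identify this conic plane with the space of binary quadratic forms in two variables up to scalar: rank-$1$ points are perfect squares, external rank-$2$ points are forms with two distinct $\mathbb{F}_q$-rational roots, and internal rank-$2$ points are forms with a Galois-conjugate pair of roots in $\mathbb{F}_{q^2} \setminus \mathbb{F}_q$. Counting unordered pairs of points on $\PG(1,q)$ and Frobenius-conjugate pairs in $\PG(1,q^2) \setminus \PG(1,q)$ then yields $q(q+1)/2$ external and $q(q-1)/2$ internal points respectively.

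For $\Sigma_2$, a diagonal $3 \times 3$ matrix has rank equal to its number of non-zero diagonal entries, so the ranks in this orbit are governed entirely by the support pattern of $(\alpha,\beta,\gamma)$. This immediately yields $n_1 = 3$ (the three coordinate points $\nu(\la e_i\ra)$); $3(q-1)$ rank-$2$ points, namely those lying on one of the coordinate lines $\alpha = 0$, $\beta = 0$, $\gamma = 0$ with the remaining two coordinates non-zero; and the complementary $q^2 + q + 1 - 3 - 3(q-1) = q^2 - 2q + 1$ rank-$3$ points. To partition the rank-$2$ points I would again apply Lemma~\ref{lem:extcriteria}: on the line $\gamma = 0$ one has $|M_{11}| = |M_{22}| = 0$ and $|M_{33}| = \alpha\beta$, so such a point is external iff $-\alpha\beta$ is a non-zero square. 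Normalising $\beta = 1$, exactly $(q-1)/2$ values of $\alpha \in \mathbb{F}_q^*$ satisfy this, and the obvious permutation symmetry between the three coordinate lines gives $n_2 = n_3 = 3(q-1)/2$.

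The only mildly subtle step is the external/internal split, but once Lemma~\ref{lem:extcriteria} is invoked the counts are completely elementary; I do not anticipate any substantial obstacle.
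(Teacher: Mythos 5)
Your proposal is correct and follows essentially the same route as the paper: the paper also reads off the $\Sigma_1$ distribution from the geometry of the conic plane (the $q+1$ points of the conic, its $q(q+1)/2$ external and $q(q-1)/2$ internal points, which is exactly your binary-quadratic-forms count), and for $\Sigma_2$ it likewise locates the rank-$2$ points on the three secants $\langle z_i,z_j\rangle$ and applies Lemma~\ref{lem:extcriteria} to split each into $(q-1)/2$ exterior and $(q-1)/2$ interior points. No gaps.
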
 

\begin{proof}
The point-orbit distribution of a plane in the $K$-orbit $\Sigma_1$ follows from Section \ref{subsec:conic_properties}. 
Let $\pi$ be a plane in the $K$-orbit $\Sigma_2$, and let $z_1,z_2,z_3$ denote the three points of rank $1$ in $\pi$. 
It is clear from the representative of $\Sigma_2$ that the points of rank $2$ in $\pi$ lie on the secants $\langle z_i,z_j\rangle$, $i\neq j$, which are lines of type $o_5$ by Table~\ref{table:lines}. 
It follows from a straightforward calculation and Lemma~\ref{lem:extcriteria} that each such secant contains $(q-1)/2$ external points and $(q-1)/2$ internal points of the conic $\cC(z_i,z_j)$ defined as in equation~\eqref{eqn:C(x,y)}. 
In total this amounts to $3(q-1)/2$ points of $\pi$ belonging to each of $\cP_{2,e}$ and $\cP_{2,i}$.
\end{proof}

\section{Planes containing exactly two points of $\cV(\bF_q)$}\label{sec:two_rank_one_points}

We now classify the planes in $\PG(5,q)$ intersecting the quadric Veronesean in exactly two points. 
By Lemma \ref{lem:b<q}, we only need to consider planes with rank distribution $[n_1,n_2,n_2]$ where $n_1=2$ and $n_2\ge q$.
Let $\pi=\langle x,y,z\rangle$ be such a plane, with $\operatorname{rank}(x)=\operatorname{rank}(y)=1$.
The condition on the rank distribution of $\pi$ implies the existence of a point of rank $2$ which is not on the line $\langle x,y\rangle$. 
Hence, we may assume that $\operatorname{rank}(z)=2$.
Consider the conics $\cC(x,y)$ and $\cC_z$ (defined as in Section~\ref{ss:V}), and let $u = \cC(x,y)\cap\cC_z$.
There are two possibilities: $u\in \{x,y\}$ and $u\notin \{x,y\}$. 

First suppose that $u\in \{x,y\}$, say $u=x$. 
Then $\langle x,z\rangle$ is the unique tangent $t_x(\cC_z)$ to $\cC_z$ through $x$ in the plane $\langle \cC_z\rangle$, because otherwise $\pi$ would contain a third point of $\cV(\bF_q)$. 
If $\ell_z$ is the pre-image of $\cC_z$ under $\nu$, then the stabiliser of $\nu^{-1}(x)$ and $\nu^{-1}(y)$ acts transitively on the points of $\ell_z\setminus \{x\}$, and so the stabiliser of $\{x,y\}$ and $\cC_z$ acts transitively on the points of $t_x(\cC_z)\setminus\{x\}$. 
Hence, there is exactly one orbit of such planes, which we call $\Sigma_3$. 
Taking $x=\nu(e_1)$, $y=\nu(e_2)$, and $\ell_z = \langle e_1,e_3\rangle$ gives the following representative:
\[
\Sigma_3: \left[ \begin{matrix} \alpha & \cdot & \gamma \\ \cdot & \beta & \cdot \\ \gamma & \cdot & \cdot \end{matrix} \right].
\]

\begin{lemma}\label{lem:Sigma_3}
A plane belonging to the $K$-orbit $\Sigma_3$ has point-orbit distribution 
\[
[ 2 , (3q-1)/2 , (q-1)/2 , q^2-q ].
\]
\end{lemma}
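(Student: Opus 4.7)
The plan is to compute everything directly from the representative matrix
\[
M(\alpha,\beta,\gamma) = \npmatrix{\alpha & 0 & \gamma \\ 0 & \beta & 0 \\ \gamma & 0 & 0},
\]
writing a plane $\pi \in \Sigma_3$ as $\pi = \pt{x,y,z}$, where $x = \nu(\pt{e_1})$ and $y = \nu(\pt{e_2})$ are the two rank-$1$ points of $\pi$, and $z$ is the rank-$2$ point with $M_z$ having ones in the $(1,3)$ and $(3,1)$ entries only. A direct calculation gives $\det M = -\beta\gamma^2$, so the cubic of points of rank at most $2$ in $\pi$ is the union of the line $\ell_1 : \gamma = 0 = \pt{x,y}$ with the doubled line $\ell_2 : \beta = 0 = \pt{x,z}$, which meet at $x$.

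Next I count rank-$1$ and rank-$2$ points. Inspecting the $2\times 2$ minors of $M$ shows that $\operatorname{rank}(M) = 1$ forces $\gamma = 0$ and $\alpha\beta = 0$, whose only solutions up to scaling give $x$ and $y$, so $n_1 = 2$. Every other point of $\ell_1$ (that is, with $\gamma = 0$ and $\alpha\beta \neq 0$) has rank $2$, contributing $q-1$ rank-$2$ points on $\ell_1$; and every point of $\ell_2$ with $\gamma \neq 0$ has rank $2$ regardless of $\alpha$, contributing a further $q$ rank-$2$ points on $\ell_2$. Thus $\pi$ contains $2q-1$ rank-$2$ points, and so $n_4 = (q^2+q+1) - 2 - (2q-1) = q^2-q$. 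The rank distributions $[2,q-1,0]$ of $\ell_1$ and $[1,q,0]$ of $\ell_2$ identify them, via Table~\ref{table:line-rank-dist}, as lines of type $o_5$ and $o_6$ respectively.

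The final step is to split the $2q-1$ rank-$2$ points into exterior and interior via Lemma~\ref{lem:extcriteria}, for which I compute
\[
-|M_{11}(M)| = 0, \qquad -|M_{22}(M)| = \gamma^2, \qquad -|M_{33}(M)| = -\alpha\beta.
\]
For rank-$2$ points on $\ell_2$ (so $\beta = 0$ and $\gamma \neq 0$), the value $\gamma^2$ is a nonzero square and the other two entries vanish, so all $q$ such points lie in $\cP_{2,e}$. For rank-$2$ points on $\ell_1$ (so $\gamma = 0$ and $\alpha\beta \neq 0$), the criterion reduces to whether $-\alpha\beta$ is a square, partitioning the $q-1$ such points evenly into $(q-1)/2$ exterior and $(q-1)/2$ interior points (consistent with the $o_5$ distribution in Table~\ref{table:line-rank-dist}). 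Summing gives $n_2 = q + (q-1)/2 = (3q-1)/2$ and $n_3 = (q-1)/2$, as required. No serious obstacle arises: once the rank-$\leq 2$ locus is recognised as an $o_5$ line together with an $o_6$ line sharing the point $x$, the rank counts follow immediately, and Lemma~\ref{lem:extcriteria} gives a uniform and short way to obtain the exterior/interior split.
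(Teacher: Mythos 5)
Your proof is correct and follows essentially the same route as the paper: both decompose the rank-$\le 2$ locus into the secant $\pt{x,y}$ (contributing $(q-1)/2$ points each to $\cP_{2,e}$ and $\cP_{2,i}$) and the tangent line through $x$ (contributing $q$ exterior points), and the rank-$3$ count follows by subtraction. The only difference is that you verify the exterior/interior split explicitly via Lemma~\ref{lem:extcriteria} where the paper cites the known geometric properties of secants and tangents; both are valid.
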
 

\begin{proof}
There are $q-1$ points of rank $2$ on the line $\langle x,y \rangle$, half belonging to $\cP_{2,e}$ and half belonging to $\cP_{2,i}$. 
There are also $q$ points of $\cP_{2,e}$ on the tangent to $\cC_z$ at $u=x$, giving a total of $q+(q-1)/2 = (3q-1)/2$ points of $\pi$ in $\cP_{2,e}$. 
The remaining points of $\pi$ are all of rank $3$.
\end{proof}

Now suppose that $u\notin \{x,y\}$. 
We may fix $x$, $y$, $u$ and $\cC_z$, and hence $\cC(x,y)$. 
The group which pointwise stabilises $\cC(x,y)$ and also setwise stabilises $\cC_z$ acts on $\langle\cC_z\rangle$ as the stabiliser of $\cC_z$ and $u$ in $\PGL(\langle\cC_z\rangle)$. 
Since $q$ is odd, this group has three orbits on points of $\langle\cC_z\rangle \backslash\cC_z$ (by Lemma~\ref{lem:Gw}):
(i) the points on the tangent to $\cC_z$ through $u$, (ii) the other external points of $\cC_z$, and (iii) the internal points of $\cC_z$.
We may choose $x=\nu(e_1)$, $y=\nu(e_2)$, $u=\nu(e_1+e_2)$ , $\cC_z =\nu(\pt{e_1+e_2,e_3})$.
We now consider separately the cases in which $z$ lies in each of these orbits.

\bigskip

(i) If $z$ is a point on the tangent to $\cC_z$ through $u$ then we obtain the orbit
\[
\Sigma_4 : \left[ \begin{matrix} \alpha & \cdot & \gamma \\ \cdot & \beta & \gamma \\ \gamma & \gamma & \cdot \end{matrix} \right].
\]

\begin{lemma}\label{lem:Sigma_4}
A plane belonging to the $K$-orbit $\Sigma_4$ has point-orbit distribution 
\[
[ 2 , (3q-1)/2 , (q-1)/2 , q^2-q ].
\]
\end{lemma}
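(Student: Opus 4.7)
The plan is to read off the point-orbit distribution directly from the representative matrix by computing the discriminant cubic and then applying Lemma~\ref{lem:extcriteria} to distinguish exterior from interior rank-$2$ points. Write the general point of the plane as
\[
M = \npmatrix{\alpha & \cdot & \gamma \\ \cdot & \beta & \gamma \\ \gamma & \gamma & \cdot}.
\]
A short expansion gives $\det M = -\gamma^2(\alpha+\beta)$, so the cubic $\cZ(\det M)$ of points of rank at most $2$ decomposes as the double line $\ell_1 = \{\gamma = 0\}$ (which is precisely the secant $\langle x,y\rangle$) together with the simple line $\ell_2 = \{\alpha + \beta = 0\}$, meeting at the point $(1:-1:0)$.

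Next I would identify the rank-$1$ points by demanding that every $2 \times 2$ minor of $M$ vanish. On $\ell_1$ the matrix is $\diag(\alpha,\beta,0)$, contributing only $x = (1:0:0)$ and $y = (0:1:0)$; on $\ell_2$ one checks that no additional rank-$1$ points occur (the first two columns of $M$ are already linearly independent whenever $\alpha + \beta = 0$ with $(\alpha,\gamma) \neq (0,0)$). Thus $n_1 = 2$, and the number of rank-$2$ points is $(q-1) + (q+1) - 1 = 2q - 1$, where the $-1$ removes the double-count at $(1:-1:0)$. Subtracting from $q^2+q+1$ gives $n_4 = q^2 - q$.

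To split the rank-$2$ points between $\cP_{2,e}$ and $\cP_{2,i}$, I would invoke Lemma~\ref{lem:extcriteria}: compute
\[
-|M_{11}(M)| = \gamma^2, \qquad -|M_{22}(M)| = \gamma^2, \qquad -|M_{33}(M)| = -\alpha\beta.
\]
The first two are always squares, so exteriority reduces to the condition that $-\alpha\beta$ is a square (with at least one of the three values nonzero). On $\ell_2$ we have $-\alpha\beta = \alpha^2$, a square; and one of the three quantities is nonzero at every rank-$2$ point, so all $q+1$ rank-$2$ points of $\ell_2$ are exterior. On $\ell_1 \setminus \{x,y\}$ the rank-$2$ points correspond to ratios $\beta/\alpha \in \bF_q^*$, and $-\beta/\alpha$ is a square for exactly $(q-1)/2$ of them. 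Adding and subtracting the shared point $(1:-1:0)$ (which is exterior) yields
\[
n_2 = (q+1) + \tfrac{q-1}{2} - 1 = \tfrac{3q-1}{2}, \qquad n_3 = \tfrac{q-1}{2},
\]
as claimed. The only subtlety is careful bookkeeping at the intersection $\ell_1 \cap \ell_2$; there are no hidden rank-$1$ points there and the exteriority status is consistent from both sides, so no obstruction arises.
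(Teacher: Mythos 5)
Your argument is correct and reaches the right distribution, but it takes a different route from the paper. The paper disposes of $\Sigma_4$ in one line by observing that the geometric counting argument used for $\Sigma_3$ carries over: the secant $\langle x,y\rangle$ contributes $q-1$ rank-$2$ points split evenly between $\cP_{2,e}$ and $\cP_{2,i}$, and the remaining rank-$2$ points lie on a second line consisting entirely of exterior points (for $\Sigma_3$ this is the tangent $t_x(\cC_z)$; for $\Sigma_4$ the roles of $u$ and $x$ are swapped). You instead work entirely from the matrix representative: you factor the determinant cubic as $-\gamma^2(\alpha+\beta)$, locate the rank-$1$ points, and apply Lemma~\ref{lem:extcriteria} to each component. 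This is exactly the computational technique the paper itself uses for the harder orbits ($\Sigma_5$, $\Sigma_{11}$, $\Sigma_{12}$, $\Sigma_{13}$, \dots), so it is a legitimate and self-contained alternative; what it loses is the structural explanation of \emph{why} the line $\{\alpha+\beta=0\}$ consists of exterior points (it is a line of type $o_{12}$ arising from the tangent configuration), and what it gains is independence from the $\Sigma_3$ analysis and an explicit verification of the intersection bookkeeping at $(1:-1:0)$, which the paper leaves implicit.

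One small inaccuracy: your parenthetical justification that no rank-$1$ points occur on $\ell_2$ --- ``the first two columns of $M$ are already linearly independent whenever $\alpha+\beta=0$ with $(\alpha,\gamma)\neq(0,0)$'' --- fails at the single point $\alpha=\beta=0$, $\gamma\neq 0$, where both columns equal $(0,0,\gamma)^T$. The conclusion still holds there, since the minor on rows and columns $\{1,3\}$ equals $-\gamma^2\neq 0$, so the matrix has rank $2$; but you should cite a nonvanishing minor rather than the first two columns. This does not affect the rest of the proof.
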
 

\begin{proof}
The proof of Lemma~\ref{lem:Sigma_3} also applies here (except that now $u \neq x$).
\end{proof}

\begin{remark} 
\textnormal{
Although planes in $\Sigma_3$ and $\Sigma_4$ have the same point-orbit distribution, these $K$-orbits are distinct. 
This can be seen by observing that for $\pi \in \Sigma_3$ the $(q-1)/2$ points in $\cP_{2,i}$ lie on a tangent line to a conic of $\cV(\bF_q)$ through one of the two points of rank $1$ in $\pi$, while for a plane in $\Sigma_4$ this is not the case.
}
\end{remark}

(ii) If $z$ is an external point of $\cC_z$ not on the tangent to $\cC_z$ through $u$ then we may choose $z$ in order to obtain the following representative of a new orbit $\Sigma_5$:
\[
\Sigma_5 : \left[ \begin{matrix} \alpha & \cdot & \gamma \\ \cdot & \beta & \gamma \\ \gamma & \gamma & \gamma \end{matrix} \right].
\]

\begin{lemma}\label{lem:Sigma_5}
A plane belonging to the $K$-orbit $\Sigma_5$ has point-orbit distribution 
\[
[2,q-1,q-1,q^2-q+1].
\]
\end{lemma}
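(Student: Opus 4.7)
The plan is to identify and analyse the cubic $\cZ(|M|) \subseteq \pi$ of points of rank at most $2$, and then to use Lemma~\ref{lem:extcriteria} to split the rank-$2$ locus into $\cP_{2,e}$ and $\cP_{2,i}$.

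First I would expand the determinant of the $\Sigma_5$ representative matrix, which factors as $|M| = \gamma\bigl(\alpha\beta - \gamma(\alpha+\beta)\bigr)$. Thus the cubic of rank-at-most-$2$ points decomposes as the union of the line $\ell : \gamma = 0$ and the conic $\cC : (\alpha-\gamma)(\beta-\gamma) = \gamma^2$, which is straightforwardly checked to be non-degenerate. The two rank-$1$ points of $\pi$ are exactly the intersections $\ell \cap \cC = \{(1,0,0),(0,1,0)\}$. The restriction of $M$ to $\ell$ is the diagonal matrix $\mathrm{diag}(\alpha,\beta,0)$, so $\ell$ belongs to the line orbit $o_5$; by Table~\ref{table:line-rank-dist}, this contributes exactly $2$ rank-$1$ points together with $(q-1)/2$ points in each of $\cP_{2,e}$ and $\cP_{2,i}$.

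The principal step is to count exterior versus interior rank-$2$ points on $\cC \setminus \ell$. I would parametrise these $q-1$ points by fixing $\gamma = 1$ and writing $(\alpha,\beta) = (1+t, 1+t^{-1})$ with $t \in \bF_q \setminus \{0\}$, then apply Lemma~\ref{lem:extcriteria}. A direct computation reduces the three quantities $-|M_{ii}(M)|$ to $-t^{-1}$, $-t$, and $-(t+1)^2/t$ respectively, each of which is $-t$ multiplied by a nonzero square (the case $t=-1$, where the last minor vanishes, being exterior by inspection of the other two). Consequently a point of $\cC \setminus \ell$ lies in $\cP_{2,e}$ precisely when $-t \in \Box$, giving $(q-1)/2$ exterior and $(q-1)/2$ interior rank-$2$ points from this component.

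Combining the two contributions yields $2$ points of rank $1$, $q-1$ points in $\cP_{2,e}$, and $q-1$ in $\cP_{2,i}$; the remaining $q^2+q+1 - 2q = q^2-q+1$ points of $\pi$ must therefore have rank $3$, matching the claimed distribution. The main obstacle is the verification that all three $2\times 2$ minors along the conic lie in the same square class; it is this coincidence (hidden in the specific form of the representative) that forces the rank-$2$ points of $\cC \setminus \ell$ to split evenly between $\cP_{2,e}$ and $\cP_{2,i}$, and it is the only place where one cannot cite a general fact and must do the explicit calculation.
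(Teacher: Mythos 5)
Your proposal is correct and follows essentially the same route as the paper: factor the determinant into the line $\gamma=0$ (of type $o_5$, contributing the two rank-one points and an even split of $(q-1)/2$ exterior and interior points) and a non-degenerate conic, then apply Lemma~\ref{lem:extcriteria} to the $q-1$ remaining conic points and observe that the three relevant minors all lie in the same square class. The paper phrases this last step via the relation $\alpha\beta=\alpha+\beta$ (noting $(1-\alpha)(1-\beta)=1$) rather than your explicit parametrisation $(\alpha,\beta)=(1+t,1+t^{-1})$, but the computation is the same.
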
 

\begin{proof}
Let $\pi$ be the plane given above, that is, points of $\pi$ have coordinates $(\alpha,0,\gamma,\beta,\gamma,\gamma)$ with $\alpha, \beta, \gamma \in \bF_q$ not all zero.
The cubic $\cZ(\Delta_\cN)$ of the net $\cN$ corresponding to $\pi$ is the union of a non-degenerate conic $\cC$ and a line $\ell$ secant to $\cC$. 
The two points in $\cC\cap \ell$ are the points $x$ and $y$ from above, and the other points in $\cC \cup \ell$ are all of rank $2$. 
This implies that the rank distribution of $\pi$ is $[2,2(q-1),q^2-q+1]$. 
It remains to show that half of the $2(q-1)$ points of rank $2$ are exterior points and half are interior points. 
First consider the $q-1$ points of rank $2$ on the line $\ell=\langle x,y\rangle$. 
Since half of these points are external points of the conic $\cC(x,y)$ determined by $x$ and $y$, and the other half are internal points of $\cC(x,y)$, we obtain $(q-1)/2$ points in each of the point orbits $\cP_{2,e}$ and $\cP_{2,i}$. 
Now consider the $q-1$ points of rank $2$ in $\cC\setminus \{x,y\}$. 
These points have coordinates $p_{\alpha,\beta} := (\alpha,0,1,\beta,1,1)$ with $\alpha\beta=\alpha+\beta$ and $\alpha,\beta \neq 1$. 
Lemma~\ref{lem:extcriteria} implies that $p_{\alpha,\beta}\in \cP_{2,e}$ if and only if $1-\alpha$, $1-\beta$ and $-\alpha\beta$ are all squares in $\mathbb{F}_q$ (note that they cannot all be zero). 
The condition $\alpha\beta=\alpha+\beta$ implies that these three elements are either all squares or all non-squares. 
In particular, $p_{\alpha,\beta}\in \cP_{2,e}$ if and only if $1-\alpha$ is a square, which occurs for half of the possible $q-1$ values of $\alpha$ (recalling that $\alpha \neq 1$). 
Hence, we obtain a further $(q-1)/2$ points in each of $\cP_{2,e}$ and $\cP_{2,i}$, as claimed.
\end{proof}

(iii) Finally, we show that if $z$ is an internal point of $\cC_z$ then the plane $\pi$ again belongs to the $K$-orbit $\Sigma_5$. In other words, this case does not yield a new orbit.

\begin{lemma}
Suppose that $\pi=\pt{x,y,z}$ is a plane in $\PG(5,q)$ with $\operatorname{rank}(x)=\operatorname{rank}(y)=1$ and $\operatorname{rank}(z)=2$, where $x,y \notin \cC_z$ and $z$ is an internal point of $\cC_z$. 
Then $\pi \in \Sigma_5$.
\end{lemma}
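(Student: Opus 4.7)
The plan is to exhibit, inside the given plane $\pi = \pt{x,y,z}$, a second rank-$2$ point $z'$ such that the triple $(x,y,z')$ satisfies the hypotheses of case~(ii); since case~(ii) configurations give planes in $\Sigma_5$, and $z'$ will be chosen off the line $\pt{x,y}$ so that $\pi = \pt{x,y,z'}$, this will conclude $\pi \in \Sigma_5$.

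First I would use the transitivity discussed in the paragraph preceding case~(i) (the stabiliser of $(x,y,\cC_z)$ in $K$ acting on $\langle\cC_z\rangle$ as $\PGL(\langle\cC_z\rangle)_u$, with $u = \cC(x,y)\cap\cC_z$), together with Lemma~\ref{lem:Gw} applied to the orbit of internal points, to move $z$ to a canonical representative. Concretely, set $x = \nu(\pt{e_1})$, $y = \nu(\pt{e_2})$, $u = \nu(\pt{e_1+e_2})$, $\cC_z = \nu(\pt{e_1+e_2,\,e_3})$, and
\[
M_z = \left[\begin{matrix} 1 & 1 & 0 \\ 1 & 1 & 0 \\ 0 & 0 & \epsilon \end{matrix}\right], \qquad -\epsilon \in \boxtimes,
\]
with $\epsilon \in \bF_q^\ast$; Lemma~\ref{lem:extcriteria} applied to $M_z$ confirms that $z$ is indeed internal to $\cC_z$. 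After the reparametrisation $(\alpha,\beta) \leftarrow (\alpha+\gamma,\beta+\gamma)$, the plane $\pi$ consists of the matrices
\[
M(\alpha,\beta,\gamma) = \left[\begin{matrix} \alpha & \gamma & 0 \\ \gamma & \beta & 0 \\ 0 & 0 & \epsilon\gamma \end{matrix}\right],
\]
whose determinant factors as $\epsilon\gamma(\alpha\beta-\gamma^2)$, exhibiting the cubic of rank-$\le 2$ points of $\pi$ as the union of the line $\gamma=0$ and a non-degenerate conic.

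I would then fix any non-square $\alpha \in \bF_q^\ast$ and set $z' := M(\alpha, 1/\alpha, 1)$. The identity
\[
z' = \tfrac{1}{\alpha}\,(\alpha e_1 + e_2)(\alpha e_1 + e_2)^T + \epsilon\, e_3 e_3^T
\]
shows that $z'$ has rank~$2$, lies on the secant of $\cV(\bF_q)$ joining $\nu(\pt{\alpha e_1 + e_2})$ and $\nu(\pt{e_3})$, and hence that $\cC_{z'} = \nu(\pt{\alpha e_1 + e_2,\, e_3})$. Intersecting preimage lines in $\PG(2,q)$ yields $u' := \cC(x,y) \cap \cC_{z'} = \nu(\pt{\alpha e_1 + e_2})$, which is distinct from both $x$ and $y$ since $\alpha \neq 0$. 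Applying Lemma~\ref{lem:extcriteria} to $z'$ reduces to verifying that $-\epsilon/\alpha$ and $-\alpha\epsilon$ are squares (the third condition is vacuous, as $-|M_{33}(z')|=0$); this follows at once from the two inputs $-\epsilon \in \boxtimes$ and $\alpha \in \boxtimes$, so $z' \in \cP_{2,e}$. Finally, differentiating $\nu(s\alpha e_1 + s e_2 + t e_3)$ at $(s,t)=(1,0)$ shows that every point of the tangent $t_{u'}(\cC_{z'})$ has last coordinate zero, whereas the last coordinate of $z'$ is $\epsilon \neq 0$; hence $z'$ does not lie on this tangent.

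All hypotheses of case~(ii) are therefore satisfied for the triple $(x,y,z')$, and since $z' \in \pi \setminus \pt{x,y}$ we obtain $\pi = \pt{x,y,z'} \in \Sigma_5$. The only delicate point is the square-class bookkeeping for the minors of $z'$, which becomes routine once the two inputs $-\epsilon \in \boxtimes$ and $\alpha \in \boxtimes$ are isolated; in particular, no case split on whether $-1$ is a square in $\bF_q$ is needed.
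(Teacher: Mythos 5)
Your proof is correct and takes essentially the same approach as the paper's: both arguments produce a point $z' \in \cP_{2,e}$ of $\pi$ lying off the line $\pt{x,y}$ and then re-span $\pi = \pt{x,y,z'}$ as a case~(ii) configuration. The paper's one-sentence proof delegates the existence of $z'$ to ``an argument analogous to'' the $\Sigma_5$ point-orbit-distribution computation, whereas you exhibit $z'$ explicitly via a rank-one decomposition and, usefully, also verify the remaining case~(ii) hypotheses ($u' \notin \{x,y\}$ and $z' \notin t_{u'}(\cC_{z'})$) that the paper's terse conclusion leaves implicit.
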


\begin{proof}
An argument analogous to the one in the proof of Lemma~\ref{lem:Sigma_5} shows that $\pi$ contains a point $z' \in \cP_{2,e}$ which is not on the line $\langle x,y \rangle$, so $\pi = \langle x,y,z' \rangle \in \Sigma_5$.
 \end{proof}

\section{Planes meeting $\cV(\bF_q)$ in one point and spanned by points of rank $\le 2$}\label{sec:one_rank_one_point_and_spanned}

Let $\pi=\pt{x,y,z}$ be a plane with $\operatorname{rank}(x)=1$ and $\operatorname{rank}(y)=\operatorname{rank}(z)=2$.
Consider the conics $\cC_y$ and $\cC_z$ determined by $y$ and $z$ (see Section~\ref{ss:V}), and let $p_x$ be a point in $\PG(2,q)$ and $\ell_y$, $\ell_z$ lines in $\PG(2,q)$ such that $x=\nu(p_x)$, $\cC_y=\nu(\ell_y)$ and $\cC_z=\nu(\ell_z)$.
If $x \in \cC_y=\cC_z$, then $\pi$ is a conic plane, and so lies in the orbit $\Sigma_1$. 
The remaining possibilities (up to symmetry) are as follows: (a) $x \notin \cC_y=\cC_z$, (b) $x = \cC_y\cap \cC_z$, (c) $x \in \cC_y\setminus\cC_z$, and (d) $x\notin \cC_y\cup \cC_z$.

\subsection{Case {\bf (a)} }

If $x \notin \cC_y=\cC_z$ then $\langle y,z\rangle$ is a line in $\langle\cC_y\rangle$ external to $\cC_y$. 
We may fix $x$ and $\cC_y$. 
The group stabilising both $x$ and $\cC_y$ acts on $\langle\cC_y\rangle$ as the stabiliser of $\cC_y$ in $\PGL(\langle\cC_y\rangle)$. 
This group acts transitively on external lines (by property (C7) of Section~\ref{subsec:conic_properties}), and so we have just one orbit arising in this way. 
Since $q$ is odd we have the following representative: 
\[
\Sigma_6 : \left[ \begin{matrix} \alpha & \beta & \cdot \\ \beta & \varepsilon\alpha & \cdot \\ \cdot & \cdot & \gamma \end{matrix} \right], 
\quad \text{where } \epsilon \in \bF_q \text{ is a non-square}.
\]

\begin{lemma}\label{lem:Sigma_6}
A plane belonging to the $K$-orbit $\Sigma_6$ has point-orbit distribution 
\[
[ 1 , (q+1)/2 , (q+1)/2 , q^2-1 ].
\] 
Its points of rank $2$ lie on a line of type $o_{10}$. 
\end{lemma}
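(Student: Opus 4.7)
The plan is to compute the rank of the generic point of a $\Sigma_6$-plane as a polynomial in $\alpha,\beta,\gamma$, read off the rank distribution, and then identify the rank-$2$ locus with a line of known type from Theorem~\ref{linesThm}.

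First I would write the generic point of the plane as the symmetric matrix $M(\alpha,\beta,\gamma)$ obtained from the representative of $\Sigma_6$, namely the block-diagonal matrix with blocks $B(\alpha,\beta)=\bigl(\begin{smallmatrix}\alpha&\beta\\ \beta&\varepsilon\alpha\end{smallmatrix}\bigr)$ and $(\gamma)$. A direct computation gives $|M(\alpha,\beta,\gamma)|=\gamma(\varepsilon\alpha^2-\beta^2)$. Since $\varepsilon$ is a non-square in $\bF_q$, the factor $\varepsilon\alpha^2-\beta^2$ vanishes if and only if $\alpha=\beta=0$; otherwise $B(\alpha,\beta)$ has rank $2$. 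This partitions the $q^2+q+1$ points of the plane into three sets: one rank-$1$ point with $\alpha=\beta=0$ and $\gamma\neq 0$; the $q+1$ points of the line $\ell:\{\gamma=0\}$, all of rank $2$; and the remaining $q^2-1$ points (those with $\gamma\neq 0$ and $(\alpha,\beta)\neq (0,0)$), all of rank $3$.

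Next I would identify $\ell$ as a line of type $o_{10}$. The parametrisation $(\alpha,\beta)\mapsto B(\alpha,\beta)$ matches the $o_{10}$ representative in Table~\ref{table:lines} after rescaling $\alpha$ by $\varepsilon$: indeed, taking $u=0$ and $v=\varepsilon^{-1}$ the $o_{10}$ condition $(*)$ reads $\varepsilon^{-1}\lambda^2-1\neq 0$ for all $\lambda\in\bF_q$, which holds precisely because $\varepsilon$ is a non-square. Now Table~\ref{table:line-rank-dist} gives the point-orbit distribution of $o_{10}$ as $[0,(q+1)/2,(q+1)/2,0]$, so the $q+1$ rank-$2$ points of $\pi$ split evenly between $\cP_{2,e}$ and $\cP_{2,i}$. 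Combining this with the rank distribution computed above yields the claimed point-orbit distribution $[1,(q+1)/2,(q+1)/2,q^2-1]$, and the statement that all rank-$2$ points lie on a line of type $o_{10}$ is exactly the identification of $\ell$ above.

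The only step requiring any care is the matching of the line $\ell$ with the normal form for $o_{10}$; this is really just a change of basis, and the non-squareness of $\varepsilon$ is what guarantees both that $\ell$ lies in the $o_{10}$ orbit (rather than in $o_5$, for instance, which would correspond to a split form) and that condition $(*)$ is met. No genuine obstacle arises.
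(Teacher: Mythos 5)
Your proof is correct and takes essentially the same approach as the paper: identify the rank-$2$ locus as the line $\gamma=0$ via the determinant factorisation and then appeal to the line classification. The only (harmless) difference is the order of deduction — the paper notes that this line is external to the conic $\cC_y$ in the conic plane $\langle\cC_y\rangle$, obtains the even split between $\cP_{2,e}$ and $\cP_{2,i}$ from the geometry of Section~\ref{subsec:conic_properties}, and only then infers type $o_{10}$ from Table~\ref{table:line-rank-dist}, whereas you first pin down the type $o_{10}$ by matching the normal form (your choice $u=0$, $v=\varepsilon^{-1}$ satisfies condition~($*$) precisely because $\varepsilon$ is a non-square) and then read the split from the same table.
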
 

\begin{proof}
The points of rank $2$ are precisely those with $\gamma=0$, namely the points on the line $\pt{y,z}$ (in the above representative). 
Since this is a line in $\langle\cC_y\rangle$ external to $\cC_y$, half of its points belong to each of $\cP_{2,e}$ and $\cP_{2,i}$. 
By Table~\ref{table:line-rank-dist}, we see that the line has type $o_{10}$.
\end{proof}

\begin{lemma}\label{lem:Sigma_6_2}
A plane belonging to the $K$-orbit $\Sigma_6$ does not contain a line of constant rank $3$. 
\end{lemma}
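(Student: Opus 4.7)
The plan is to use the structural information already established in Lemma~\ref{lem:Sigma_6}, namely that the $q+1$ rank-$2$ points of a plane $\pi \in \Sigma_6$ all lie on a single line $\ell$ of type $o_{10}$. This alignment of the rank-$2$ locus onto one line is the decisive feature of $\Sigma_6$, and it should force every line in $\pi$ to meet the rank-$2$ locus.

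First, I would consult Table~\ref{table:line-rank-dist} to observe that the only $K$-orbit of lines in $\PG(5,q)$ with constant rank $3$ is $o_{17}$, whose point-orbit distribution is $[0,0,0,q+1]$. So the task reduces to showing that $\pi$ contains no line of type $o_{17}$, equivalently, no line with zero rank-$2$ points.

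Next, let $m \subset \pi$ be any line. Since $\pi$ is a projective plane, either $m = \ell$ or $m$ meets $\ell$ in exactly one point. In the first case, $m$ carries $q+1$ rank-$2$ points by Lemma~\ref{lem:Sigma_6}, and hence is not of constant rank $3$. In the second case, the unique intersection point $m \cap \ell$ has rank $2$ (again by Lemma~\ref{lem:Sigma_6}), so $m$ contains at least one rank-$2$ point and again fails to have constant rank $3$. Either way, no line of $\pi$ can lie in the orbit $o_{17}$.

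There is no real obstacle here, since Lemma~\ref{lem:Sigma_6} has already done the work of locating the rank-$2$ points on a single line; the argument is purely a one-line incidence observation in the projective plane $\pi$.
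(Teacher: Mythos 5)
Your argument is correct and is essentially the paper's own proof: every line of $\pi$ meets the constant-rank-$2$ line of type $o_{10}$ identified in Lemma~\ref{lem:Sigma_6}, so it contains a rank-$2$ point. The preliminary detour through Table~\ref{table:line-rank-dist} to identify $o_{17}$ is harmless but unnecessary.
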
 

\begin{proof}
This follows immediately from Lemma~\ref{lem:Sigma_6}, because every line must intersect the line of type $o_{10}$, which is of constant rank $2$. 
\end{proof}

\subsection{Case {\bf (b)} }
If $x = \cC_y\cap \cC_z$ then $\langle x,y\rangle$ is the unique tangent to $\cC_y$ through $x$ in $\langle \cC_y \rangle$, and $\langle x,z\rangle$ is the unique tangent to $\cC_z$ through $x$ in $\langle \cC_z \rangle$ (because otherwise $\pi$ would contain more than one point of rank $1$). 
Hence, $\pi=\langle x,y,z\rangle$ is completely determined by $\ell_y$ and $\ell_z$ (because $p_x =\ell_y\cap \ell_z$). 
Since $\PGL(3,q)$ acts transitively on pairs of lines meeting in a point, we obtain just one orbit in this way. 
A representative is a follows.
\[
\Sigma_7 : \left[ \begin{matrix} \alpha & \beta & \gamma \\ \beta & \cdot & \cdot \\ \gamma & \cdot & \cdot \end{matrix} \right].
\]

\begin{lemma}\label{lem:Sigma_7}
A plane belonging to the $K$-orbit $\Sigma_7$ is a tangent plane of $\cV(\bF_q)$ and has point-orbit distribution $ [ 1 , q^2+q,0,0 ]$.
\end{lemma}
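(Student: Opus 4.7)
\begin{Prf}
The plan is to work directly with the representative matrix
\[
M(\alpha,\beta,\gamma) = \left[ \begin{matrix} \alpha & \beta & \gamma \\ \beta & \cdot & \cdot \\ \gamma & \cdot & \cdot \end{matrix} \right]
\]
of the orbit $\Sigma_7$ and read off the rank data. First I would expand along the last row (or column) to observe that $\det M(\alpha,\beta,\gamma)=0$ identically, so the plane contains no point of rank~$3$, i.e.\ $n_4=0$. Next I would inspect the $2\times 2$ minors: the minor in the $(1,2)$-position equals $-\beta^2$ and that in the $(1,3)$-position equals $-\gamma^2$, so $M(\alpha,\beta,\gamma)$ has rank $1$ precisely when $\beta=\gamma=0$. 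This gives a unique rank-$1$ point, namely $\nu(\pt{e_1})$, so $n_1=1$ and the remaining $q^2+q$ points of the plane have rank $2$.

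To see that all of these rank-$2$ points are exterior, I would apply Lemma~\ref{lem:extcriteria}. A direct computation gives
\[
-|M_{11}(M(\alpha,\beta,\gamma))| = 0, \qquad
-|M_{22}(M(\alpha,\beta,\gamma))| = \gamma^2, \qquad
-|M_{33}(M(\alpha,\beta,\gamma))| = \beta^2,
\]
which are all squares in $\bF_q$; and whenever the point has rank $2$ (i.e.\ $(\beta,\gamma)\neq (0,0)$) at least one of $\beta^2,\gamma^2$ is nonzero. Hence every rank-$2$ point belongs to $\cP_{2,e}$, so $n_2=q^2+q$ and $n_3=0$, giving the claimed point-orbit distribution $[1,q^2+q,0,0]$.

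Finally, to identify $\pi$ as a tangent plane of $\cV(\bF_q)$, I would note that the tangent space to $\cV(\bF_q)$ at the point $\nu(\pt{p})=pp^{T}$ is spanned (as a subspace of symmetric matrices) by $pp^{T}$ and the matrices $pv^{T}+vp^{T}$ for $v\in \bF_q^{3}$. Taking $p=e_1$ and $v\in\{e_2,e_3\}$ yields exactly the three generators
\[
\left[ \begin{matrix} 1 & \cdot & \cdot \\ \cdot & \cdot & \cdot \\ \cdot & \cdot & \cdot \end{matrix} \right],\quad
\left[ \begin{matrix} \cdot & 1 & \cdot \\ 1 & \cdot & \cdot \\ \cdot & \cdot & \cdot \end{matrix} \right],\quad
\left[ \begin{matrix} \cdot & \cdot & 1 \\ \cdot & \cdot & \cdot \\ 1 & \cdot & \cdot \end{matrix} \right]
\]
of the representative of $\Sigma_7$, so $\pi$ is the tangent plane to $\cV(\bF_q)$ at $\nu(\pt{e_1})$.

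The only mildly nontrivial point is checking that the three $2\times 2$ minors used in Lemma~\ref{lem:extcriteria} never produce a non-square; in our normalisation this is immediate because they are each a perfect square in $\bF_q$, so no case distinction on $-1\in\Box$ is needed.
\end{Prf}
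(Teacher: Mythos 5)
Your proof is correct, and all the computations check out: the determinant of the representative is identically zero, the rank-$1$ locus is $\beta=\gamma=0$, the three signed minors in Lemma~\ref{lem:extcriteria} are $0$, $\gamma^2$, $\beta^2$, and the tangent-space description $\{pv^T+vp^T : v\in\bF_q^3\}$ at $p=e_1$ does span exactly the $\Sigma_7$ representative. Your route is, however, genuinely different from the paper's. The paper's proof is synthetic and two sentences long: it uses the way $\Sigma_7$ was constructed in case~(b) --- the plane contains the tangent lines $t_x(\cC_y)$ and $t_x(\cC_z)$ to two distinct conics of $\cV(\bF_q)$ through its unique rank-$1$ point $x$, which forces it to be the tangent plane at $x$; then every line through $x$ in that plane is a tangent to some conic of $\cV(\bF_q)$ through $x$, so every rank-$2$ point lies on such a tangent and is by definition exterior. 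Your coordinate computation buys independence from that construction (you verify everything from the matrix alone, which is a useful sanity check and makes the lemma self-contained), at the cost of obscuring \emph{why} all the rank-$2$ points are exterior --- in the paper's argument this is immediate from the definition of $\cP_{2,e}$ as the rank-$2$ points lying on tangents to their conics, with no appeal to Lemma~\ref{lem:extcriteria} needed. Your final remark that no case distinction on $-1\in\Box$ is required is accurate but redundant once one sees the geometric reason.
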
 

\begin{proof}
Since such a plane $\pi$ contains two tangents through its unique point $x$ of rank $1$, it follows that $\pi$ is the tangent plane of $\cV(\bF_q)$ at $x$. 
The lines through $x$ in $\pi$ are the tangents to the conics of $\cV(\bF_q)$ through $x$, and so all the rank-$2$ points in $\pi$ are contained in $\cP_{2,e}$.
\end{proof}

\subsection{Case {\bf (c)}}
Now suppose that $x \in \cC_y \setminus \cC_z$. 
Then $\langle x,y\rangle$ must be the unique tangent $t_x(\cC_y)$ to $\cC_y$ through $x$ in $\langle \cC_y \rangle$. 
Write $w = \cC_y\cap \cC_z$, and $\nu(p_w) = w$.
Without loss of generality we may fix $p_x$, $p_w$ and $\ell_z$. 
The subgroup of $K$ stabilising $x$, $w$ and $\pt{\cC_z}$ has three orbits on points of $\pt{\cC_z}\backslash \cC_z$: (i) the points on the unique tangent $t_w(\cC_z)$ to $\cC_z$ through $w$ in $\langle \cC_z \rangle$, (ii) the external points of $\cC_z$ not on $t_w(\cC_z)$, (iii) the internal points of $\cC_z$. 
Hence, we obtain at most three $K$-orbits, and we show below that we obtain exactly three orbits $\Sigma_8$, $\Sigma_9$ and $\Sigma_{10}$ (see Remark~\ref{remarkSigma8910}). 
The representatives of these orbits given below are obtained by choosing $p_x=\pt{e_1}$, $p_w=\pt{e_2}$ and $\ell_z =\pt{e_2,e_3}$.

\bigskip

{\bf (c-i)} If $z$ lies on $t_w(\cC_z)$ then $\pi=\pt{t_x,z}$ and we obtain the orbit
\[
\Sigma_8 : \left[ \begin{matrix} \alpha & \beta & \cdot \\ \beta & \cdot & \gamma \\ \cdot & \gamma & \cdot \end{matrix} \right].
\]

\begin{lemma}\label{lem:Sigma_8}
A plane belonging to the $K$-orbit $\Sigma_8$ has point-orbit distribution $ [ 1 , 2q,0,q^2-q ]$. 
Its points of rank $2$ lie on two lines: a tangent to $\cV(\bF_q)$, and a line of type $o_{12}$.
\end{lemma}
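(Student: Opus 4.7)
A general point of the $\Sigma_8$ representative plane has matrix
\[
M = \npmatrix{\alpha & \beta & 0 \\ \beta & 0 & \gamma \\ 0 & \gamma & 0},
\]
whose determinant, computed by cofactor expansion along the last row, equals $-\alpha\gamma^2$. The cubic of rank-$\le 2$ points of the plane therefore decomposes as the union of the two lines $\ell_0 : \gamma = 0$ and $\ell_\alpha : \alpha = 0$, and every rank-$2$ point of the plane must lie on $\ell_0 \cup \ell_\alpha$.

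Next I would identify the types of these two lines by reading off their restricted matrices. The restriction of $M$ to $\gamma = 0$ coincides verbatim with the $o_6$ representative in Table~\ref{table:lines}, and the restriction to $\alpha = 0$ coincides with the $o_{12}$ representative. Consulting Table~\ref{table:line-rank-dist}, $\ell_0$ contains a unique rank-$1$ point, namely $\nu(e_1)$ (the point with $\beta=\gamma=0$), together with $q$ exterior rank-$2$ points; in particular $\ell_0$ is a tangent to $\cV(\bF_q)$ at $\nu(e_1)$, and since $\ell_\alpha$ carries no rank-$1$ point, $\nu(e_1)$ is the unique rank-$1$ point of the whole plane. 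The line $\ell_\alpha$ in turn consists of $q+1$ exterior rank-$2$ points.

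Finally, $\ell_0 \cap \ell_\alpha$ is the single point with $\alpha = \gamma = 0$, which is a rank-$2$ point, so inclusion--exclusion gives $1$ rank-$1$ point and $q + (q+1) - 1 = 2q$ exterior rank-$2$ points on $\ell_0 \cup \ell_\alpha$, with no interior rank-$2$ point. The remaining $q^2 + q + 1 - (2q+1) = q^2 - q$ points of the plane all have rank $3$, which gives the claimed distribution $[1, 2q, 0, q^2 - q]$.

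No step presents a real obstacle; the proof reduces to one short determinant computation followed by direct lookup in the two tables of line orbits. The only point requiring vigilance is confirming that the $o_{12}$ representative genuinely carries no interior rank-$2$ points, which is read straight off Table~\ref{table:line-rank-dist} (and could, if desired, be verified independently using Lemma~\ref{lem:extcriteria}).
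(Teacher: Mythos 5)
Your proof is correct and follows essentially the same route as the paper: the paper likewise observes that the rank-$\le 2$ locus is the union of the tangent line $\langle x,y\rangle$ (your $\ell_0$, of type $o_6$, contributing $x$ plus $q$ exterior points) and the line $\langle y,z\rangle$ (your $\ell_\alpha$, of type $o_{12}$, contributing $q+1$ exterior points), and then counts. The only cosmetic difference is that you obtain the decomposition by computing the determinant $-\alpha\gamma^2$ and matching the component lines against Table~\ref{table:lines}, whereas the paper reads it off the geometric construction of case (c-i); both are fine.
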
 

\begin{proof}
Consider the plane $\pi=\pt{x,y,z}$ as above (where $y$ corresponds to $\alpha=\gamma=0$ and $z$ to $\alpha=\beta=0$).
The $q$ points other than $x$ on the line $\pt{x,y}$ belong to $\cP_{2,e}$. 
The other points of rank $2$ are the points on the line $\pt{y,z}$, which belongs to the $K$-orbit $o_{12}$ by Table~\ref{table:lines}. 
\end{proof}

{\bf (c-ii) and (c-iii)} If $z$ is an external point of $\cC_z$ but does not lie on $t_w(\cC_z)$ then we obtain the orbit
\[
\Sigma_9 : \left[ \begin{matrix} \alpha & \beta & \cdot \\ \beta & \gamma & \cdot \\ \cdot & \cdot & -\gamma \end{matrix} \right].
\]
If $z$ is an internal point of $\cC_z$ then we obtain the orbit
\[
\Sigma_{10} : \left[ \begin{matrix} \alpha & \beta & \cdot \\ \beta & \gamma & \cdot \\ \cdot & \cdot & -\varepsilon\gamma \end{matrix} \right], 
\quad \text{where } \epsilon \in \bF_q \text{ is a non-square}.
\]

\begin{lemma}\label{lem:Sigma_9_10}
Let $\pi$ be a plane in one of the $K$-orbits $\Sigma_9$ or $\Sigma_{10}$, and let $x$ be the unique point of rank $1$ in $\pi$. 
The points of rank $2$ in $\pi$ lie on the union of a line $\ell$ through $x$ and a non-degenerate conic meeting $\ell$ in the point $x$. 
The point-orbit distribution of $\pi$ is $[1,2q,0,q^2-q ]$ or $[1,q,q,q^2-q ]$ according to whether $\pi$ belongs to $\Sigma_9$ or $\Sigma_{10}$. 
\end{lemma}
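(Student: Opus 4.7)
The plan is to compute the cubic of points of rank at most $2$ in $\pi$, identify its components, and then use Lemma~\ref{lem:extcriteria} to separate exterior from interior rank-$2$ points. Writing the representatives of $\Sigma_9$ and $\Sigma_{10}$ uniformly as
\[
M(\alpha,\beta,\gamma)=\left[\begin{matrix}\alpha & \beta & \cdot \\ \beta & \gamma & \cdot \\ \cdot & \cdot & -c\gamma\end{matrix}\right],
\]
with $c=1$ for $\Sigma_9$ and $c=\varepsilon\in\boxtimes$ for $\Sigma_{10}$, I would compute $\det M=-c\gamma(\alpha\gamma-\beta^2)$. Hence, inside the plane $\pi$, the locus of points of rank at most $2$ splits as the union of the line $\ell:\gamma=0$ and the non-degenerate conic $\cC:\alpha\gamma=\beta^2$. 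Both pass through the unique rank-$1$ point $x$ of $\pi$, which corresponds to $(\alpha,\beta,\gamma)=(1,0,0)$; substituting $\gamma=0$ into the conic equation gives the double root $\beta^2=0$, so $\ell$ is tangent to $\cC$ at $x$ and $\ell\cap\cC=\{x\}$. This gives the structural part of the lemma, yields a total of $2q+1$ points of rank at most $2$, and therefore $q^2-q$ points of rank $3$.

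The remaining task is to identify the type of each rank-$2$ point. The line $\ell$ has coordinate form matching the $o_6$ representative from Table~\ref{table:lines}, so by Table~\ref{table:line-rank-dist} all $q$ of its rank-$2$ points are exterior, independently of $c$. For a rank-$2$ point on $\cC\setminus\{x\}$ (so $\gamma\ne 0$ and $\alpha\gamma=\beta^2$, which forces the top-left block to have rank exactly $1$ and hence $M$ to have rank $2$), direct computation of the cofactors of $M$ gives
\[
-|M_{11}(M)|=c\gamma^2,\qquad -|M_{22}(M)|=c\alpha\gamma=c\beta^2,\qquad -|M_{33}(M)|=0,
\]
so Lemma~\ref{lem:extcriteria} identifies the point as exterior precisely when $c$ is a square in $\bF_q$. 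In the $\Sigma_9$ case ($c=1$) all $q$ points of $\cC\setminus\{x\}$ are exterior, giving the distribution $[1,2q,0,q^2-q]$; in the $\Sigma_{10}$ case ($c$ a non-square) all $q$ are interior, giving $[1,q,q,q^2-q]$.

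Everything reduces to a short determinant computation and an application of Lemma~\ref{lem:extcriteria}, so I do not anticipate any substantive obstacle. The only noteworthy feature is that a single expression $M(\alpha,\beta,\gamma)$ handles both orbits in parallel, with the square/non-square dichotomy in $c$ driving the difference between the two point-orbit distributions while leaving the geometric description (line plus tangent conic through $x$) identical in both cases.
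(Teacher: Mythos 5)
Your proposal is correct and follows essentially the same route as the paper: factor the determinant as $-c\gamma(\alpha\gamma-\beta^2)$, identify the line $\ell:\gamma=0$ and the non-degenerate conic $\alpha\gamma=\beta^2$ meeting at $x$, and apply Lemma~\ref{lem:extcriteria} to the points of $\cC\setminus\{x\}$, with the square/non-square status of $c$ deciding exterior versus interior. The only cosmetic differences are that you treat both orbits uniformly via the parameter $c$ and certify the points on $\ell$ as exterior by recognising $\ell$ as the $o_6$ representative, where the paper invokes Lemma~\ref{lem:extcriteria} (or the tangent-line observation) directly.
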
 

\begin{proof}
Suppose that $\pi$ is the representative of $\Sigma_9$ given above, with its unique point $x$ of rank~$1$ corresponding to $\beta=\gamma=0$. 
The points of rank~$2$ in $\pi$ lie on the cubic $\gamma(\alpha\gamma-\beta^2)=0$, which is the union of the line $\ell:\gamma=0$ and a non-degenerate conic $\cC:\alpha\gamma-\beta^2=0$ meeting $\ell$ in the point $x$. 
The points of rank~$2$ on $\ell$ belong to $\cP_{2,e}$ by Lemma~\ref{lem:extcriteria}. 
(Alternatively, observe that they lie on the tangent $t_x(\cC_y)$ to $\cC_y$ through $x$ in $\langle \cC_y \rangle$.) 
Each point of rank~$2$ in $\cC\setminus \ell$ has coordinates $(a^2,a,0,1,0,-1)$ for some $a \in \bF_q$ and so also belongs to $\cP_{2,e}$ by Lemma~\ref{lem:extcriteria}. 
The proof for $\Sigma_{10}$ is analogous, but now Lemma~\ref{lem:extcriteria} shows that the points of rank~$2$ in $\cC\setminus \ell$ belong to $\cP_{2,i}$.
\end{proof}
 
\begin{remark} \label{remarkSigma8910}
\textnormal{
It follows from Lemmas \ref{lem:Sigma_8} and \ref{lem:Sigma_9_10} that $\Sigma_8$, $\Sigma_9$ and $\Sigma_{10}$ are three distinct $K$-orbits. 
Indeed, if $\pi$ is a plane in one of these orbits then its orbit can be determined as follows. 
If the cubic $Q$ of points of rank at most $2$ in $\pi$ is the union of two lines, then $\pi \in \Sigma_8$. 
If not, then $Q$ is the union of a line $\ell$ and a non-degenerate conic $\cC$ meeting $\ell$ in the unique point $x$ of rank~$1$ in $\pi$. 
In this case, consider any line $\ell' \neq \ell$ in $\pi$ through $x$. 
Then $\ell'$ meets $\cC$ in a second point, which has rank~$2$. 
If this point belongs to $\cP_{2,e}$ then $\pi \in \Sigma_9$. 
Otherwise, $\pi \in \Sigma_{10}$.
}
\end{remark}

\subsection{Case {\bf (d)} }\label{ssec:cased}
Finally, suppose that $x\notin \cC_y\cup \cC_z$, and write $w = \cC_y\cap \cC_z$. 
Without loss of generality, we may fix $p_w$, $p_x$, $\ell_y$ and $\ell_z$. 
Let us take them as $\pt{e_1}$, $\pt{e_2+e_3}$, $\pt{e_1,e_2}$ and $\pt{ e_1,e_3}$, respectively. 
Then the stabiliser of this configuration in $\PG(2,q)$ is contained in the group of perspectivities with centre $p_w$ (it fixes three lines through $p_w$, and hence all lines through $p_w$).
On the plane $\langle \cC_y\rangle$ in $\PG(5,q)$, the induced group acts as the stabiliser of the conic $\cC_y$ and the point $w$. We consider separately the cases where (i) $y$ lies on the tangent line $t_w(\cC_y)$ to $\cC_y$ through $w$ in $\langle \cC_y\rangle$, and (ii) $y$ does not lie on $t_w(\cC_y)$.

\bigskip

{\bf (d-i)} Suppose that $y$ is on the tangent line $t_w(\cC_y)$ to $\cC_y$ through $w$ in $\langle \cC_y\rangle$. 
Let $u$ be the unique point on $\cC_y\backslash\{w\}$ such that $y=t_u(\cC_y)\cap t_w(\cC_y)$. 
We may take $p_u=\pt{e_2}$ and $y = (0,1,0,0,0,0)$.
Then the stabiliser of $x$, $y$, $w$ and $\cC_z$ is induced by the group of elations with centre $p_w$ and axis $\pt{p_x,p_u}$, and acts on $\langle \cC_z\rangle$ as the stabiliser of $\cC_z$ and the two points $w$ and 
\[
v:=\nu(\pt{p_x,p_u} \cap \ell_z)
\]
of $\cC_z$. 
Consider the tangent line $t_{v}(\cC_z)$ to $\cC_z$ through $v$ in $\langle \cC_z\rangle$. 
Note that $v=\nu(\pt{e_3})$.
There are five possibilities for $z$: 
(A) $z=t_w(\cC_y)\cap t_{v}(\cC_z)$, (B) $z \in t_w(\cC_y) \setminus t_v(\cC_z)$, (C) $z \in t_v(\cC_z) \setminus t_w(\cC_y)$, (D) $z \in \pt{w,v}$, and (E)~$z$~does not lie on $\pt{w,v}$. 

\bigskip

{\bf (d-i-A)} 
If $z=t_w(\cC_y)\cap t_v(\cC_z)$ then $z = (0,0,1,0,0,0)$. 
The plane $\pi=\pt{x,y,z}$ then also contains the point $r = (0,1,1,0,0,0)$, with $\cC_r=\nu(\langle p_x,p_w \rangle)$. 
Hence $\pi=\pt{x,r,z}$ with $x\in \cC_r$, $x\notin \cC_z$, and $z$ on the tangent $t_w(\cC_z)$ to $\cC_z$ through $w=\cC_r\cap\cC_z$ in $\langle \cC_z \rangle$.
This implies that $\pi$ belongs to the $K$-orbit $\Sigma_8$.

\bigskip

{\bf (d-i-B)} If $z$ lies on $t_w(\cC_y)$ but not on $t_{v}(\cC_z)$ then without loss of generality $z=(1,0,1,0,0,0)$, but then $\pi$ also contains the point $(1,1,1,1,1,1)$, which has rank~$1$, a contradiction.

\bigskip

{\bf (d-i-C)} If $z$ lies on $t_{v}(\cC_z)$ but not on $t_w(\cC_y)$ then without loss of generality $z=(0,0,1,0,0,1)$.
This yields a new orbit $\Sigma_{11}$ with the following representative:
\[
\Sigma_{11} : \left[ \begin{matrix} \cdot & \beta & \gamma \\ \beta & \alpha & \alpha \\ \gamma & \alpha & \alpha+\gamma \end{matrix} \right].
\]

\begin{lemma}\label{lem:Sigma_11}
A plane belonging to the $K$-orbit $\Sigma_{11}$ has point-orbit distribution $ [ 1 ,q,0, q^2]$.
\end{lemma}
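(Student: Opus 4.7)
The plan is to read the distribution directly off the matrix representative
\[
M_{(\alpha,\beta,\gamma)} \;=\; \npmatrix{\cdot & \beta & \gamma \\ \beta & \alpha & \alpha \\ \gamma & \alpha & \alpha+\gamma}.
\]
Expanding along the first row gives
\[
|M_{(\alpha,\beta,\gamma)}| \;=\; -\alpha(\beta-\gamma)^2 - \beta^2\gamma,
\]
so the cubic of points of rank at most $2$ in $\pi$ is $\cZ(\alpha(\beta-\gamma)^2+\beta^2\gamma)$. Setting the three partials to zero forces first $\beta=\gamma$, and then $\beta=0$, so the unique singular point of the cubic is $(\alpha:\beta:\gamma)=(1:0:0)$. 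The associated matrix visibly has rank $1$, so by the preliminary lemma in Section~\ref{sec:planes} this is the unique rank-$1$ point of $\pi$, giving $n_1=1$.

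Next, I would parametrise the remaining points of the cubic by $(\beta:\gamma)\in\PG(1,q)$. The case $\beta=\gamma$ forces $\beta=0$ and recovers the rank-$1$ point, while for each of the $q$ projective points with $\beta\ne\gamma$ the cubic determines $\alpha$ uniquely as $\alpha=-\beta^2\gamma/(\beta-\gamma)^2$. Thus $\pi$ contains exactly $q$ rank-$2$ points, whence $n_4=q^2+q+1-1-q=q^2$ and $n_2+n_3=q$.

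To split $n_2$ from $n_3$ I would apply Lemma~\ref{lem:extcriteria}. A direct computation yields
\[
-|M_{11}(M_{(\alpha,\beta,\gamma)})|=-\alpha\gamma, \qquad -|M_{22}(M_{(\alpha,\beta,\gamma)})|=\gamma^2, \qquad -|M_{33}(M_{(\alpha,\beta,\gamma)})|=\beta^2.
\]
The last two are squares, and substituting the parametrisation gives
\[
-\alpha\gamma \;=\; \frac{\beta^2\gamma^2}{(\beta-\gamma)^2} \;=\; \Bigl(\tfrac{\beta\gamma}{\beta-\gamma}\Bigr)^2,
\]
again a square. At a rank-$2$ point, $\beta^2$ and $\gamma^2$ do not both vanish (else we return to the rank-$1$ point), so the hypotheses of Lemma~\ref{lem:extcriteria} are satisfied and every rank-$2$ point lies in $\cP_{2,e}$. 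Hence $(n_1,n_2,n_3,n_4)=(1,q,0,q^2)$ as claimed. The only delicate step is the parametrisation of the cubic, where one must verify that the $q$ rank-$2$ points are enumerated without duplication; after that, the square-versus-non-square check is immediate.
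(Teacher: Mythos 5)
Your proposal is correct and follows essentially the same route as the paper: both identify the cubic $\cZ(|M|)$, observe that each line through the rank-one point $x$ with $\beta\neq\gamma$ carries exactly one rank-two point while the line $\beta=\gamma$ carries none, and then apply Lemma~\ref{lem:extcriteria} to the three principal minors (your $-\alpha\gamma=\bigl(\tfrac{\beta\gamma}{\beta-\gamma}\bigr)^2$, $\gamma^2$, $\beta^2$ match the paper's values up to the factor $(\beta-\gamma)^2$ from clearing denominators) to conclude all $q$ rank-two points lie in $\cP_{2,e}$. Your extra verification that $(1:0:0)$ is the unique singular point of the cubic, hence the unique rank-one point, is a harmless addition the paper omits because it is already built into the construction of $\Sigma_{11}$.
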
 

\begin{proof} The cubic of points of rank at most $2$ in the plane $\pi$ with the above representative has equation $\alpha(\beta-\gamma)^2-\beta^2\gamma=0$. 
Consider the lines through the point $x$ of rank $1$, which corresponds to $\beta=\gamma=0$. 
Each line through $x$ and a point with $\alpha=0$ and $\beta\neq \gamma$ contains exactly one point $s$ of rank~$2$, represented by the matrix
\[
M_s=\npmatrix{\cdot&\beta(\beta-\gamma)^2&\gamma(\beta-\gamma)^2\\
\beta(\beta-\gamma)^2& -\beta^2\gamma&-\beta^2\gamma \\
\gamma(\beta-\gamma)^2& -\beta^2\gamma&-\beta^2\gamma+\gamma(\beta-\gamma)^2}.
\]
In the notation of Lemma \ref{lem:extcriteria} we have $-|M_{11}(M_s)|=\beta^2\gamma^2(\beta-\gamma)^2$, $-|M_{22}(M_s)|=\gamma^2(\beta-\gamma)^2$ and $-|M_{33}(M_s)|=\beta^2(\beta-\gamma)^2$, implying that $s \in \cP_{2,e}$. 
This amounts to $q$ points of $\cP_{2,e}$. 
The line through $x$ and the point with $\alpha=0$ and $\beta=\gamma$ contains no points of rank~$2$.
\end{proof}

{\bf (d-i-D)} For convenience, in this section we change our points of reference, instead fixing $p_w$, $p_x$, $p_u$ and $p_v$ as $\pt{e_2}$, $\pt{e_1}$, $\pt{e_1+e_3}$ and $\pt{e_3}$, respectively. 
This implies that $\ell_y= \pt{e_1+e_3,e_2}$ and $\ell_z=\pt{ e_2,e_3}$. Note that $y$ is now the point with coordinates $(0,1,0,0,1,0)$. 
Since we are now assuming that $z$ is on the line $\pt{w,v}$, we may take $z=(0,0,0,1,0,b)$ for some non-zero $b \in \bF_q$. 
If $b$ is a square, say $b=1$, then we obtain the orbit 
\begin{equation} \label{Sigma12rep}
\Sigma_{12} : \left[ \begin{matrix} \alpha & \beta & \cdot \\ \beta & \gamma & \beta \\ \cdot & \beta & \gamma \end{matrix} \right].
\end{equation}
If $b$ is a non-square, we obtain the orbit
\begin{equation} \label{Sigma13rep}
\Sigma_{13} : \left[ \begin{matrix} \alpha & \beta & \cdot \\ \beta & \gamma & \beta \\ \cdot & \beta & \varepsilon\gamma \end{matrix} \right], 
\quad \text{where } \varepsilon \in \bF_q \text{ is a non-square}.
\end{equation}
Let $\pi_{12}$, $\pi_{13}$ denote these representatives, and let $C_{12}$, $C_{13}$ denote the respective cubic curves of points of rank at most $2$ (defined by setting the determinants of the representatives to zero). 

\begin{lemma}\label{lem:Sigma_12_13}
A plane belonging to the $K$-orbit $\Sigma_{12}$ has point-orbit distribution 
\[
[1,(q-1)/2,(q-1)/2,q^2+1],
\] 
and a plane belonging to the $K$-orbit $\Sigma_{13}$ has point-orbit distribution 
\[
[1,(q+1)/2,(q+1)/2,q^2-1 ].
\] 
\end{lemma}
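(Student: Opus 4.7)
The plan is to study the cubic of rank-at-most-$2$ points in each of the representative planes $\pi_{12}$ and $\pi_{13}$ from equations~\eqref{Sigma12rep} and~\eqref{Sigma13rep}. A direct computation will show that the unique rank-$1$ point in each is $x=(1,0,0,0,0,0)$ (corresponding to $\beta=\gamma=0$), and that $x$ is the only singular point of the respective cubic. The tangent cone of the cubic at $x$ will turn out to be $\gamma^2-\beta^2=0$ for $\pi_{12}$ (a pair of $\bF_q$-rational lines) and $\varepsilon\gamma^2-\beta^2=0$ for $\pi_{13}$ (a pair of lines conjugate over $\bF_{q^2}$, since $\varepsilon$ is a non-square). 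Hence both cubics are irreducible nodal cubics, with split node in the $\pi_{12}$ case and non-split node in the $\pi_{13}$ case. From the standard description of the smooth locus of such a cubic via its normalisation $\mathbb{P}^1$, the number of rank-$2$ points in $\pi_{12}$ is $q-1$ and in $\pi_{13}$ is $q+1$, matching the totals in the claimed distributions.

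To split each total between $\cP_{2,e}$ and $\cP_{2,i}$, I would parametrise the rank-$2$ points by the pencil of $q+1$ lines through $x$: a direction $(0:\beta_0:\gamma_0)$ meets the cubic (off $x$) in at most one rank-$2$ point, whose coordinates one obtains by substituting $(\alpha,\lambda\beta_0,\lambda\gamma_0)$ into the cubic equation and solving for $\alpha$. For $\pi_{12}$ the two tangent directions $\gamma_0=\pm\beta_0$ produce no additional point, accounting for the gap between $q+1$ and $q-1$, whereas for $\pi_{13}$ every direction contributes exactly one. Inserting the resulting symmetric matrix into Lemma~\ref{lem:extcriteria}, one computes the three quantities $-|M_{11}|,-|M_{22}|,-|M_{33}|$, and the key step---and the main place where a computational slip could occur---is to verify that in both cases these quantities pairwise differ by non-zero squares, so share a common quadratic character. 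Consequently the rank-$2$ point is in $\cP_{2,e}$ if and only if this common character is trivial, which reduces to the condition that $1-\gamma_0^2$ is a non-zero square (for $\pi_{12}$) or $\beta_0^2-\varepsilon\gamma_0^2$ is a non-zero square (for $\pi_{13}$). The degenerate direction $\beta_0=0$ for $\pi_{12}$, where two of the minors vanish, I would treat separately: its associated rank-$2$ point is $\mathrm{diag}(0,1,1)$, so it lies in $\cP_{2,e}$ if and only if $-1$ is a square in $\bF_q$.

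The conclusion is then a short count on an affine conic. For $\pi_{12}$, counting $\gamma_0\in\bF_q\setminus\{\pm 1\}$ with $1-\gamma_0^2$ a non-zero square is equivalent to counting the affine points of $u^2+v^2=1$ with $v\neq 0$, then halving. The projective closure has $q+1$ rational points and meets the line at infinity in $0$ or $2$ rational points according as $-1$ is a non-square or a square; subtracting the two points with $v=0$ and halving gives $(q-1)/2$ or $(q-3)/2$ valid $\gamma_0$. Adding the contribution of the exceptional direction $\beta_0=0$ (which contributes iff $-1\in\Box$) yields exactly $(q-1)/2$ exterior rank-$2$ points in both cases, and the remaining $(q-1)/2$ rank-$2$ points are interior. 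For $\pi_{13}$ the analogous computation uses the conic $u^2-v^2=\varepsilon$, which has $q-1$ affine points (none with $v=0$, since $\varepsilon$ is a non-square), yielding $(q-1)/2$ valid $\beta_0$ in the generic parametrisation; adding the always-exterior direction $\gamma_0=0$ gives $(q+1)/2$ exterior and hence $(q+1)/2$ interior rank-$2$ points. This produces the claimed point-orbit distributions.
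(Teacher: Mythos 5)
Your proposal is correct and follows essentially the same route as the paper: parametrise the rank-$2$ points by the pencil of lines through the unique rank-$1$ point $x$ (one such point per direction where the quadratic form $f_i(\beta,\gamma)$ is non-zero, which gives the totals $q-1$ and $q+1$), then apply Lemma~\ref{lem:extcriteria} to reduce membership in $\cP_{2,e}$ to the condition that $\beta^2-\gamma^2$ (respectively $\beta^2-\varepsilon\gamma^2$) is a non-zero square. The only difference is cosmetic: the paper performs the final count by citing Dickson's Theorem~64 on the number of representations of non-zero squares by a binary quadratic form, which sidesteps your case division on whether $-1$ is a square, whereas you count points on an explicit affine conic by hand; both yield $(q-1)/2$ and $(q+1)/2$.
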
 

\begin{proof} 
The cubic curves $C_{i}$, $i\in \{12,13\}$, are given by $\alpha f_{i}(\beta,\gamma)+g_{i}(\beta,\gamma)=0$, where $f_{12}(\beta,\gamma)=\gamma^2-\beta^2$, $f_{13}(\beta,\gamma)=\epsilon\gamma^2-\beta^2$, $g_{12}=-\beta^2\gamma$ and $g_{13}=-\epsilon\beta^2\gamma$. 
Consider a line through the point $x$ of rank~$1$ (corresponding to $\beta=\gamma=0$) and a point with $\alpha=0$. 
If $f_i(\beta,\gamma) \neq 0$ then such a line contains a unique point of rank~$2$. 
Since $f_{13}(\beta,\gamma)=0$ has no non-trivial solutions, every line through $x$ in $\pi_{13}$ contains exactly one point of rank~$2$ and so $\pi_{13}$ has rank distribution $[1,q+1,q^2-1]$. 
On the other hand, $f_{12}(\beta,\gamma)=0$ if and only if $\beta = \pm\gamma$, and in these cases $g_{12}(\beta,\gamma) \neq 0$. 
Hence, in $\pi_{12}$ there are exactly two lines through $x$ which contain no point of rank~$2$, and so $\pi_{12}$ has rank distribution $[1,q-1,q^2+1]$. 
Now, the points of rank~$2$ in $\pi_{12}$ are those satisfying $\alpha=\beta^2\gamma/(\gamma^2-\beta^2)$, and Lemma \ref{lem:extcriteria} implies that such a point is in $\cP_{2,e}$ if and only if $\beta^2-\gamma^2$ is a non-zero square. 
By \cite[Theorem~64]{Dickson1901}, the quadratic form $\beta^2-\gamma^2$ evaluates to a  non-zero square for precisely $(q-1)^2/2$ inputs $(\beta,\gamma) \in \mathbb{F}_q^2$, so it follows (upon factoring out scalars) that $\pi_{12}$ contains $(q-1)/2$ points in $\cP_{2,e}$. 
Similarly a point of rank~$2$ in $\pi_{13}$ is in $\cP_{2,e}$ if and only if $\beta^2-\epsilon \gamma^2$ is a non-zero square. 
This occurs for $(q+1)(q-1)/2$ inputs $(\beta,\gamma) \in \mathbb{F}_q^2$, so it follows that $\pi_{13}$ contains $(q+1)/2$ points in $\cP_{2,e}$.
 \end{proof}

\begin{lemma} \label{lem:Sigma_12_13_cst_rk_3_line}
A plane in either of the $K$-orbits $\Sigma_{12}$ or $\Sigma_{13}$ contains a line of constant rank~$3$. 
\end{lemma}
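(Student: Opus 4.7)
The plan is to treat the two orbits $\Sigma_{12}$ and $\Sigma_{13}$ separately, in each case exhibiting a line of $\pi$ disjoint from the cubic $Q$ of rank-at-most-$2$ points; such a line automatically has constant rank~$3$.

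For $\Sigma_{12}$, the proof of Lemma~\ref{lem:Sigma_12_13} gives $|Q| = 1 + (q-1) = q$. Each point of $\pi \cong \PG(2,q)$ lies on $q+1$ lines, so the total number of incidences between $Q$ and lines of $\pi$ is at most $q(q+1) = q^2+q$, which is strictly less than the total number $q^2+q+1$ of lines of $\pi$. By pigeonhole, some line of $\pi$ meets $Q$ trivially.

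For $\Sigma_{13}$ the direct count fails because $(q+2)(q+1) > q^2+q+1$, so I would use the finer incidence structure extracted from the proof of Lemma~\ref{lem:Sigma_12_13}: every line of $\pi$ through the unique rank-$1$ point $x$ contains exactly one additional point of $Q$, so $S := Q\setminus\{x\}$ consists of $q+1$ rank-$2$ points, one on each line through $x$. In particular, every pair of distinct points of $S$ spans a line not through $x$. Let $n_i$ denote the number of lines of $\pi$ not through $x$ that meet $S$ in exactly $i$ points. Counting lines, point--line incidences (each $P \in S$ lies on exactly $q$ lines not through $x$), and pairs of $S$-points on a common line gives
\[
\sum_i n_i = q^2, \qquad \sum_i i\, n_i = q(q+1), \qquad \sum_i \binom{i}{2} n_i = \binom{q+1}{2} = \tfrac{q(q+1)}{2}.
\]
Solving this linear system yields $n_0 = q(q-1)/2 - n_3$, and the trivial bound $n_2 \ge 0$ applied to the third identity forces $n_3 \le q(q+1)/6$. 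Hence $n_0 \ge q(q-2)/3 > 0$ for $q \ge 3$, so some line of $\pi$ not through $x$ is disjoint from $Q$.

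The main conceptual obstacle is the $\Sigma_{13}$ case, where simple pigeonhole misses by one; one has to exploit the fact (implicit in the preceding lemma) that $x$ is a node of $Q$ whose tangent directions are defined over $\mathbb{F}_{q^2}$ but not $\mathbb{F}_q$ (because $\varepsilon$ is a non-square), so that $S$ meets every line through $x$ in exactly one point. Once this transversal property is in hand, the rest is routine incidence counting.
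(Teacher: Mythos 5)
Your overall strategy differs from the paper's: the paper disposes of both orbits in one line by observing that the cubic $Q$ of points of rank at most $2$ has at most $q+2$ points and contains no lines, hence cannot be a blocking set (a blocking set of $\PG(2,q)$ containing no line has at least $q+\sqrt{q}+1>q+2$ points). Your $\Sigma_{12}$ count is a correct, fully elementary substitute. However, your $\Sigma_{13}$ argument has a genuine gap: the identity $n_0 = q(q-1)/2 - n_3$ is valid only if $n_i=0$ for all $i\ge 4$, i.e.\ only if no line meets $S$ in four or more points. You never establish this, and without it the three displayed identities do not force $n_0>0$. In general one gets $n_0 = q(q-1)/2 - \sum_{i\ge 3}\binom{i-1}{2}n_i$, and the degenerate configuration in which $S$ is itself a line (so $n_{q+1}=1$, $n_1=q^2-1$, $n_0=0$) satisfies all three of your equations \emph{and} is consistent with the transversal property you single out (every line through $x$ would still meet such an $S$ in exactly one point). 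So the facts you actually use do not suffice to conclude.

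The missing ingredient is precisely the hypothesis the paper's proof makes explicit: $Q$ is a cubic curve with no linear components, so every line not contained in it meets it in at most three points. For $\Sigma_{13}$ this must be checked: the cubic is $\alpha(\varepsilon\gamma^2-\beta^2)-\varepsilon\beta^2\gamma=0$, and a linear factor would force the binary form $\varepsilon\gamma^2-\beta^2$ (irreducible because $\varepsilon$ is a non-square) to divide $\varepsilon\beta^2\gamma$, which is impossible; equivalently, the three rank-$2$ points corresponding to $(\beta,\gamma)\in\{(1,0),(0,1),(1,1)\}$ are not collinear, so $S$ is not a line. Once $n_i=0$ for $i\ge 4$ is in hand, your bounds $n_3\le q(q+1)/6$ and $n_0\ge q(q-2)/3>0$ go through for all odd $q\ge 3$, so the argument is repairable. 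Note, though, that your closing diagnosis of the ``main conceptual obstacle'' is slightly off: what saves the count is not the transversality of $S$ to the pencil of lines through $x$, but the degree bound on line--cubic intersections, i.e.\ the absence of linear components in $Q$.
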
 

\begin{proof} 
The cubic curve of points of rank at most $2$ in such a plane does not form a blocking set, because it contains no lines and at most $q+2$ points. 
\end{proof}

\begin{lemma} \label{lem:Sigma_6_13}
The $K$-orbits $\Sigma_{6}$, $\Sigma_{12}$ and $\Sigma_{13}$ are distinct.
\end{lemma}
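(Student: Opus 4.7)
The plan is to separate this into two independent distinctions, using the invariants already computed in the preceding lemmas, so that essentially no new calculation is required.

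First, I would compare the three point-orbit distributions listed in Lemmas \ref{lem:Sigma_6} and \ref{lem:Sigma_12_13}. Planes in $\Sigma_{6}$ and $\Sigma_{13}$ both have distribution $[1,(q+1)/2,(q+1)/2,q^2-1]$, whereas planes in $\Sigma_{12}$ have distribution $[1,(q-1)/2,(q-1)/2,q^2+1]$. Since point-orbit distribution is a $K$-invariant, this immediately shows $\Sigma_{12}$ is distinct from both $\Sigma_{6}$ and $\Sigma_{13}$.

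It remains to show $\Sigma_{6} \neq \Sigma_{13}$, for which the point-orbit distribution is insufficient. Here I would use the existence (or non-existence) of a line of constant rank $3$ contained in the plane, which is plainly a $K$-invariant property. By Lemma \ref{lem:Sigma_6_2}, no plane in $\Sigma_{6}$ contains a line of constant rank $3$, since every line in such a plane must meet the distinguished $o_{10}$-line of rank-$2$ points. On the other hand, Lemma \ref{lem:Sigma_12_13_cst_rk_3_line} asserts that every plane in $\Sigma_{13}$ does contain a line of constant rank $3$. Therefore $\Sigma_{6}$ and $\Sigma_{13}$ cannot lie in the same $K$-orbit.

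Combining the two arguments gives the claim. There is no real obstacle here: the preceding lemmas were set up precisely so that (i) the point-orbit distribution distinguishes $\Sigma_{12}$ from the other two, and (ii) the cubic of rank-at-most-$2$ points being a single line (in $\Sigma_6$) versus not forming a blocking set (in $\Sigma_{12}$, $\Sigma_{13}$) distinguishes $\Sigma_6$ from $\Sigma_{13}$. The only care needed is to invoke the two invariants in the right pairing, as the point-orbit distribution alone does not separate $\Sigma_6$ from $\Sigma_{13}$.
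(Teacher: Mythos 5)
Your proof is correct and uses exactly the same two invariants as the paper — the point-orbit distributions from Lemmas \ref{lem:Sigma_6} and \ref{lem:Sigma_12_13} and the presence or absence of a constant-rank-$3$ line from Lemmas \ref{lem:Sigma_6_2} and \ref{lem:Sigma_12_13_cst_rk_3_line} — only pairing them slightly differently (the paper lets the distribution separate $\Sigma_{12}$ from $\Sigma_{13}$ and the line invariant separate $\Sigma_6$ from both others, while you do the reverse). This is essentially the paper's argument and is equally valid.
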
 

\begin{proof} 
The $K$-orbits $\Sigma_{12}$ and $\Sigma_{13}$ are distinct by Lemma~\ref{lem:Sigma_12_13}. 
By Lemma \ref{lem:Sigma_6_2}, a plane in $\Sigma_6$ does not contain a line of constant rank $3$. 
Hence, $\Sigma_6$ is distinct from $\Sigma_{12}$ and $\Sigma_{13}$ by Lemma~\ref{lem:Sigma_12_13_cst_rk_3_line}.
\end{proof}

We also record the following lemma for future reference.

\begin{lemma} \label{lem:Sigma_13_o_13_line}
A plane in either of the $K$-orbits $\Sigma_{12}$ or $\Sigma_{13}$ contains a line of type $o_{13,1}$ or $o_{13,2}$. 
\end{lemma}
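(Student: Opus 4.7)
The plan is to exhibit an explicit line inside each of the plane representatives given in~\eqref{Sigma12rep} and~\eqref{Sigma13rep}, and to identify its orbit via its rank distribution together with Table~\ref{table:line-rank-dist}. Specifically, for both $\Sigma_{12}$ and $\Sigma_{13}$, I would consider the line $\ell$ in the plane obtained by setting $\alpha=0$ in the respective representative.

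Concretely, in $\Sigma_{12}$ the line $\ell$ consists of the points represented by matrices $\left[\begin{matrix} \cdot & \beta & \cdot \\ \beta & \gamma & \beta \\ \cdot & \beta & \gamma \end{matrix}\right]$ with $(\beta,\gamma) \neq (0,0)$. A direct computation shows this matrix has determinant $-\beta^2\gamma$, so its rank drops below $3$ precisely when $\beta=0$ or $\gamma=0$. Each of these two conditions yields a single point on $\ell$ (namely $(\beta,\gamma)=(0,1)$ and $(\beta,\gamma)=(1,0)$ up to scalars), so $\ell$ contains exactly two points of rank at most $2$. Moreover, inspecting the top-left $2\times 2$ minor ($-\beta^2$) shows that no point on $\ell$ has rank $1$: a rank-$1$ point would force $\beta=0$, but then the matrix $\left[\begin{matrix} \cdot & \cdot & \cdot \\ \cdot & \gamma & \cdot \\ \cdot & \cdot & \gamma \end{matrix}\right]$ has rank $2$ for every $\gamma\neq 0$. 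Hence $\ell$ has rank distribution $[0,2,q-1]$. The identical argument applies to $\Sigma_{13}$ with $\alpha=0$: the determinant becomes $-\varepsilon\beta^2\gamma$ and the same two points arise as the only points of rank at most $2$ on $\ell$, again all of rank exactly $2$.

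To conclude, I would scan Table~\ref{table:line-rank-dist}: among the $15$ line orbits in $\PG(5,q)$, the only ones with rank distribution $[0,2,q-1]$ are $o_{13,1}$ (with point-orbit distribution $[0,2,0,q-1]$) and $o_{13,2}$ (with point-orbit distribution $[0,1,1,q-1]$). Since $\ell$ must belong to one of these, the lemma follows. There is no serious obstacle here; the mild subtlety is simply to pick the section of the $3$-parameter family that makes the determinant factor cleanly, which is why the substitution $\alpha=0$ is the right one to try first in both cases. Note that the proof deliberately does not distinguish between the two possible orbits for $\ell$; doing so would require splitting into cases depending on whether $-1$ is a square in $\bF_q$, but this refinement is not needed for the statement.
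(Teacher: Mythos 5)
Your proposal is correct and takes essentially the same approach as the paper: both set $\alpha=0$ in the representatives \eqref{Sigma12rep} and \eqref{Sigma13rep} and identify the resulting line as being of type $o_{13,1}$ or $o_{13,2}$. The only difference is that the paper simply asserts the $K$-equivalence (noting the exact orbit depends on whether $-1$ is a square), whereas you justify the identification by computing the rank distribution $[0,2,q-1]$ and matching it against Table~\ref{table:line-rank-dist}, which is a valid and slightly more explicit way to fill in that step.
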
 

\begin{proof} 
Setting $\alpha=0$ in \eqref{Sigma12rep} and \eqref{Sigma13rep} yields the lines 
\[
\left[ \begin{matrix} \cdot & \beta & \cdot \\ \beta & \gamma & \beta \\ \cdot & \beta & \gamma \end{matrix} \right]
\quad \text{and} \quad 
\left[ \begin{matrix} \cdot & \beta & \cdot \\ \beta & \gamma & \beta \\ \cdot & \beta & \varepsilon\gamma \end{matrix} \right], 
\]
each of which is $K$-equivalent to either the $o_{13,1}$ representative or the $o_{13,2}$ representative from Table~\ref{table:lines}. 
(The exact $K$-orbit of each line depends on whether $-1$ is a square in $\mathbb{F}_q$ or not.)
\end{proof}

Before proceeding to the analysis of the case~(d-i-E), recall the definitions of the {\it Hessian} and the {\it points of inflexion} of a cubic curve: the Hessian is the determinant of the $3\times 3$ matrix of second derivatives, and the points of inflexion are the points of intersection of the Hessian and the curve. 
In the cases considered below, all points of inflexion lie on a common line, which we call the {\it line of inflexion}.
Cubic curves in characteristic $3$ require separate consideration, so we state some results only for characteristic $\neq 3$. 
(Lemma~\ref{lem:pi14char3} confirms that we do not need the any of the analogous results in characteristic $3$.) 

Consider again the planes $\pi_{12} \in \Sigma_{12}$ and $\pi_{13} \in \Sigma_{13}$ defined in \eqref{Sigma12rep} and \eqref{Sigma13rep}, respectively. 
Recall the definitions of the associated cubic curves $\overline{C}_{12}$ and $\overline{C}_{13}$ in $\PG(2,q)$ given immediately before Lemma~\ref{lem:Sigma_12_13}.
Given a fixed basis for $\pi_i$, $i \in \{12,13\}$, define a map from $\pi_i$ to $\PG(2,q)$ in the natural way, and denote the image of $C_i$ in $\PG(2,q)$ by $\overline{C}_i$.

\begin{lemma} \label{lem:Sigma_13_o_13_inflexion}
Suppose that $q$ is not a power of $3$. 
Then the cubic curves $\overline{C}_{12}$ and $\overline{C}_{13}$ each have a double point at $P=(1,0,0)$. 
The tangents at the double point, and the points and lines of inflexion, are as given in the following table:
\begin{center}
\begin{tabular}{lcc}
\toprule
Plane&$\pi_{12}$&$\pi_{13}$\\
\midrule
 Tangents at $P$&$\beta=\pm \gamma$&$\beta= \pm \sqrt{\varepsilon}\gamma$\\
Inflexion points&$\{(0,1,0),(-1,\pm 4\sqrt{-1/3},4)\}$&$\{(0,1,0),(-\varepsilon,\pm 4\sqrt{-\epsilon/3},4)\}$\\
Line of inflexion&$4\alpha+\gamma=0$&$4\alpha+\varepsilon\gamma=0$\\
\bottomrule
\end{tabular}
\end{center}
In particular, when $-3$ is a square in $\bF_q$, $\overline{C}_{12}$ has three $\bF_q$-rational inflexion points, and $\overline{C}_{13}$ has one. 
When $-3$ is a non-square, $\overline{C}_{12}$ has one $\bF_q$-rational inflexion point, and $\overline{C}_{13}$ has three.
\end{lemma}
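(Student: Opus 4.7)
The plan is a direct computation: verify that $P=(1,0,0)$ is a double point of each cubic, read off the tangent cones from the lowest-degree terms of the Taylor expansion at $P$, compute the Hessian, intersect it with the cubic to locate the three inflexion points, and finally deduce the rationality count from a square/non-square condition on $-3$. Set $F_{12} := \alpha(\gamma^2-\beta^2) - \beta^2\gamma$ and $F_{13} := \alpha(\varepsilon\gamma^2-\beta^2) - \varepsilon\beta^2\gamma$ for the defining polynomials of $\overline{C}_{12}$ and $\overline{C}_{13}$ (namely the determinants of the matrix representatives \eqref{Sigma12rep} and \eqref{Sigma13rep}). Checking $F_i$ and its three partial derivatives at $P$ confirms that $P \in \overline{C}_i$ is a singular point, and the degree-$2$ homogeneous part of $F_i$ in the local variables $\beta,\gamma$ (i.e.\ after setting $\alpha=1$) equals $\gamma^2-\beta^2$ and $\varepsilon\gamma^2-\beta^2$ respectively, which factor as in the table.

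Next I would compute each Hessian $H_i := \det(\partial^2 F_i/\partial x_j \partial x_k)$ as a routine $3 \times 3$ determinant, obtaining $H_{12} \sim -\alpha\beta^2 + 2\beta^2\gamma + \alpha\gamma^2 + \gamma^3$ and the analogous expression for $H_{13}$ with the appropriate $\varepsilon$-twists. To find the inflexion points I would solve $F_i = H_i = 0$ modulo the double point $P$: eliminate $\alpha$ from $F_i = 0$ (writing $\alpha = \beta^2\gamma/(\gamma^2-\beta^2)$ in the appropriate chart) and substitute into $H_i = 0$. After clearing denominators, the resulting polynomial in $\beta,\gamma$ factors as
\[
\gamma\,(3\beta^2 + \gamma^2)(\gamma^2-\beta^2) \quad \text{and} \quad \gamma\,(3\beta^2 + \varepsilon\gamma^2)(\varepsilon\gamma^2-\beta^2)
\]
for $\overline{C}_{12}$ and $\overline{C}_{13}$ respectively. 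The last factor in each product is spurious (it is the denominator reintroduced by the elimination, and contributes only the point $P$). The remaining two factors produce the inflexion point $(0,1,0)$ (from $\gamma=0$) together with two further points obtained from $3\beta^2 + \gamma^2 = 0$ or $3\beta^2 + \varepsilon\gamma^2 = 0$, which after normalising $\gamma = 4$ and computing $\alpha$ from $F_i=0$ give the coordinates $(-1,\pm 4\sqrt{-1/3},4)$ and $(-\varepsilon,\pm 4\sqrt{-\varepsilon/3},4)$ in the table. Collinearity along the lines $4\alpha + \gamma=0$ and $4\alpha + \varepsilon\gamma=0$ is then a direct check.

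Finally, $\sqrt{-1/3} \in \bF_q$ if and only if $-3 \in \Box$ (since $-1/3 = -3/9$), while $\sqrt{-\varepsilon/3} \in \bF_q$ if and only if $-3 \in \boxtimes$ (because $\varepsilon \in \boxtimes$). Hence when $-3 \in \Box$ all three inflexion points of $\overline{C}_{12}$ are $\bF_q$-rational while only $(0,1,0)$ is rational for $\overline{C}_{13}$, and the situation reverses when $-3 \in \boxtimes$. The hypothesis $q \not\equiv 0 \pmod 3$ enters only to justify dividing by $3$ in the Hessian-intersection step. The main (modest) obstacle is this Hessian--cubic intersection, which is mechanical but slightly bulky; once the inflexion points are written down explicitly, everything else follows by inspection.
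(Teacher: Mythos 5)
Your proposal is correct and follows essentially the same route as the paper: compute the Hessian, intersect it with the cubic (the paper substitutes the curve equation directly into the Hessian, which is linear in $\alpha(\gamma^2-\beta^2)$, arriving at the same condition $\gamma(3\beta^2+\gamma^2)=0$, resp.\ $\gamma(3\beta^2+\varepsilon\gamma^2)=0$, without the spurious factor you correctly discard), and then read off rationality from whether $-3$ is a square. The paper's proof is just a two-sentence version of this same computation, so no further comment is needed.
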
 

\begin{proof}
The cubic $\overline{C}_{12}$ is $\alpha(\gamma^2-\beta^2)-\beta^2\gamma$, so its Hessian is $8\alpha(\gamma^2 - \beta^2) +8 \gamma(2\beta^2 + \gamma^2)$, and the points of inflexion therefore satisfy $\gamma(3\beta^2+\gamma^2)=0$. 
A similar calculation applies for $\overline{C}_{13}$.
\end{proof}

{\bf (d-i-E)} If the point $z$ is not on the line $\pt{w,v}$ and not on the tangents $t_w(\cC_y)$ and $t_v(\cC_z)$, then we may assume the line $\pt{z,w}$ passes through a point $r\in\cC_z\setminus\{w,v\}$. 
We retain the same representatives as in case (d-i-D), and without loss of generality we further choose $r = \nu(p_r)$ with $p_r=\pt{e_2-e_3}$. 
We may then take $z=z_c:=(0,0,0,c,-1,1)$ where $c\not \in \{0,1\}$ (because $z_0$ lies on $t_v(\cC_z)$ and $z_1 = r$). 
The plane $\pi_c=\pt{x,y,z_c}$ is then represented by the matrix
\[
\left[ \begin{matrix} \alpha & \beta & \cdot \\ \beta & c\gamma & \beta-\gamma \\ \cdot & \beta-\gamma & \gamma \end{matrix} \right].
\]

\begin{lemma}\label{lem:nice}
Let $\ell$ be a line through the point $x$ in the plane $\pi_c$, $c\in \bF_q\setminus\{0,1\}$. 
Then $\ell$ is of type $o_{8,1}$ or $o_{8,2}$ unless $c$ is a square in $\bF_q$ and $\ell$ contains one of the two points $(0,\beta,0,c\gamma,\beta-\gamma,\gamma)$ with $\beta^2-2\beta\gamma+(1-c)\gamma^2=0$. 
In this case, $\ell$ is of type $o_9$. 
\end{lemma}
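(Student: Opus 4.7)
The plan is to parametrise the lines through $x$ explicitly, compute the determinant of a general matrix on such a line as a function of the line parameter, and read off the rank distribution. Fix a ``second point'' of $\ell$ of the form $y_{(\beta_0,\gamma_0)} = (0,\beta_0,0,c\gamma_0,\beta_0-\gamma_0,\gamma_0)$ with $(\beta_0,\gamma_0)\neq (0,0)$, so that a general point of $\ell\setminus\{x\}$ is $p_\lambda = \lambda x + y_{(\beta_0,\gamma_0)}$ for $\lambda\in\bF_q$. A direct expansion of $\det M_{p_\lambda}$ gives
\[
\det M_{p_\lambda} \;=\; \lambda\bigl[(c-1)\gamma_0^2-\beta_0^2+2\beta_0\gamma_0\bigr] \;-\; \beta_0^2\gamma_0,
\]
so the rank distribution of $\ell$ is controlled by the coefficient $A(\beta_0,\gamma_0):=(c-1)\gamma_0^2-\beta_0^2+2\beta_0\gamma_0$ and the constant term $-\beta_0^2\gamma_0$.

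I would split into two cases. If $A(\beta_0,\gamma_0)\neq 0$, there is a unique $\lambda_0=\beta_0^2\gamma_0/A(\beta_0,\gamma_0)$ with $\det M_{p_{\lambda_0}}=0$, and a short verification (examining the rows of $M_{p_{\lambda_0}}$ and using $c\neq 1$ together with $(\beta_0,\gamma_0)\neq(0,0)$) shows that $p_{\lambda_0}$ has rank exactly $2$ rather than $1$. Hence the point-orbit distribution of $\ell$ has the form $[1,\star,\star,q-1]$ with a single rank-$2$ point, which by Table~\ref{table:line-rank-dist} forces $\ell\in o_{8,1}\cup o_{8,2}$. If instead $A(\beta_0,\gamma_0)=0$, then $\beta_0=0$ would force $(c-1)\gamma_0^2=0$ and hence $\gamma_0=0$ (since $c\neq 1$), and $\gamma_0=0$ would force $\beta_0=0$; both contradict $(\beta_0,\gamma_0)\neq(0,0)$. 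Therefore $\beta_0\gamma_0\neq 0$ and $\det M_{p_\lambda}=-\beta_0^2\gamma_0\neq 0$ for every $\lambda$, so every point of $\ell\setminus\{x\}$ has rank $3$, giving distribution $[1,0,0,q]$, which is $o_9$.

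Finally, viewing $A=0$ as a quadratic equation in $t=\beta_0/\gamma_0$, namely $t^2-2t-(c-1)=0$, it has roots $t=1\pm\sqrt{c}$ and so admits $\bF_q$-solutions precisely when $c$ is a square in $\bF_q$. In that case the two roots correspond to the two projective directions $(\beta:\gamma)$ satisfying $\beta^2-2\beta\gamma+(1-c)\gamma^2=0$, matching the condition in the statement. The main (mild) obstacle is the rank check in Case~1---ensuring that the unique point with vanishing determinant is genuinely of rank $2$, not rank $1$---which is settled by a direct row-proportionality argument that exploits the standing hypothesis $c\notin\{0,1\}$.
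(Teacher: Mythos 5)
Your proposal is correct and follows essentially the same route as the paper: expand the determinant of a general point of $\ell$ as a linear function of the parameter $\lambda$ (equivalently $\alpha$), observe that its leading coefficient is (up to sign) the quadratic form $\beta^2-2\beta\gamma+(1-c)\gamma^2$ whose vanishing is governed by whether $c$ is a square, and then identify the orbit from the rank distributions in Table~\ref{table:line-rank-dist}. The only difference is that you spell out the check that the unique determinant-zero point in the generic case has rank exactly $2$ (not $1$), a step the paper leaves implicit.
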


\begin{proof} 
The determinant of $\pi_c$ is $\alpha(\beta^2-2\beta\gamma+(1-c)\gamma^2)+\beta^2\gamma$. 
Therefore, a line $\ell$ in $\pi_c$ through the point $x=(1,0,0,0,0,0)$ and a point with coordinates $(0,\beta,0,c\gamma,\beta-\gamma,\gamma)$, $(\beta,\gamma)\neq (0,0)$, contains exactly one point of rank~$2$ provided that $\beta^2-2\beta\gamma+(1-c)\gamma^2\neq 0$. 
In this case, $\ell$ has type $o_{8,1}$ or $o_{8,2}$ because by Table~\ref{table:line-rank-dist} these are the only $K$-orbits of lines with rank distribution $[1,1,q-1]$.
If $\beta^2-2\beta\gamma+(1-c)\gamma^2 = 0$ then $c$ must be a square; in this case, $\ell$ contains no points of rank~$2$ and hence has type $o_9$. 
\end{proof}

\begin{lemma}\label{lem:pi14new}
Suppose that $q$ is not a power of $3$ and let $c\in \bF_q\setminus\{0,1\}$. 
If $-3c$ is a square in $\bF_q$ and $\frac{\sqrt{c}+1}{\sqrt{c}-1}$ is not a cube in $\bF_q(\sqrt{-3})$, then the plane $\pi_c$ is not in any of the $K$-orbits $\Sigma_1,\ldots,\Sigma_{13}$.
\end{lemma}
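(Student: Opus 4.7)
The plan is to rule out each of the thirteen orbits $\Sigma_1, \ldots, \Sigma_{13}$ in turn. Most fall to elementary invariants; the real work is distinguishing $\pi_c$ from the two nodal-cubic orbits $\Sigma_{12}$ and $\Sigma_{13}$.

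First, by Lemma~\ref{lem:nice} every line through the rank-one point $x = (1,0,0,0,0,0)$ of $\pi_c$ is of type $o_{8,1}$, $o_{8,2}$, or $o_9$, and by Table~\ref{table:line-rank-dist} each such line contains exactly one rank-one point and at least one rank-three point. So $\pi_c$ has exactly one rank-one point and does contain rank-three points, ruling out $\Sigma_1, \ldots, \Sigma_5$ (too many rank-one points) and $\Sigma_7$ (no rank-three points). Next I would show that the determinantal cubic
\[
F(\alpha,\beta,\gamma) = \alpha\, q(\beta,\gamma) + \beta^2\gamma, \qquad q(\beta,\gamma) = \beta^2 - 2\beta\gamma + (1-c)\gamma^2,
\]
of $\pi_c$ is irreducible over $\bF_q$ by a short case check on a hypothetical factorisation $F = L \cdot Q$: either $L$ is linear in $\alpha$, forcing $q \mid \beta^2\gamma$, or $L \in \bF_q[\beta,\gamma]$, forcing a linear factor of $q$ to divide $\beta^2\gamma$; both contradict $c \notin \{0,1\}$. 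Irreducibility rules out $\Sigma_6, \Sigma_8, \Sigma_9, \Sigma_{10}$, whose determinantal cubics are reducible over $\bF_q$. Finally, the tangent cone of $F$ at $x$ is $q$, whose discriminant $4c$ is non-zero, so $x$ is a node (not a cusp), ruling out the cuspidal-cubic orbit $\Sigma_{11}$.

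The main step is to distinguish $\pi_c$ from $\Sigma_{12}$ and $\Sigma_{13}$, both of which also have irreducible nodal determinantal cubics. My plan is to construct a $\PGL(3,q)$-invariant of such cubics as follows. Let $\Theta$ and $\Phi$ be the binary forms whose roots, in the pencil of lines through the node, give the two tangent directions and the three inflection directions respectively, and let $\tau_1, \tau_2$ be the roots of $\Theta(t,1)$. A direct Mobius computation shows that
\[
\Lambda := \frac{\Phi(\tau_1,1)}{\Phi(\tau_2,1)} \pmod{(k^\times)^3}, \qquad k := \bF_q(\sqrt{-3}),
\]
transforms by a cube under any projective change of coordinate on the pencil, so is well-defined in $k^\times/(k^\times)^3$ up to the involution $a \leftrightarrow a^{-1}$ coming from swapping the tangents; since $k^\times/(k^\times)^3 \cong \mathbb{Z}/3$ has no $2$-torsion, the property $\Lambda = 1$ is unaffected by this involution and so $\Lambda$ modulo cubes is a genuine $\PGL(3,q)$-invariant of the cubic.

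To compute $\Lambda$ for the three cubics at hand: Lemma~\ref{lem:Sigma_13_o_13_inflexion} gives $\Phi = \gamma(3\beta^2 + \gamma^2)$ and $\Theta = \gamma^2 - \beta^2$ for $\Sigma_{12}$, so $\tau_{1,2} = \pm 1$ and $\Phi(\pm 1,1) = 4$, yielding $\Lambda = 1$. The analogous computation over $\bF_{q^2}$ gives $\Lambda = 1$ for $\Sigma_{13}$. For $\pi_c$, I would compute $\det(\mathrm{Hess}\,F)$ and reduce modulo $F$ to obtain the clean identity
\[
\det(\mathrm{Hess}\,F) \equiv -2\beta^3 + 3(1-c)\beta^2\gamma - (1-c)^2\gamma^3 \pmod{F};
\]
combined with $\Theta = q$ (so $\tau_{1,2} = 1 \pm \sqrt{c}$) and the short identity $\Phi(1 \pm \sqrt{c},1) = -4c(\sqrt{c} \pm 1)^2$, this yields
\[
\Lambda = \left( \frac{\sqrt{c}+1}{\sqrt{c}-1} \right)^2 \pmod{(k^\times)^3}.
\]
Since $\mathbb{Z}/3$ has no $2$-torsion, $a \in (k^\times)^3 \iff a^2 \in (k^\times)^3$, and the hypothesis that $(\sqrt{c}+1)/(\sqrt{c}-1)$ is not a cube in $k$ gives $\Lambda \neq 1$ for $\pi_c$. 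Hence $\pi_c$ is not $K$-equivalent to $\Sigma_{12}$ or $\Sigma_{13}$. The main obstacle is verifying the $\PGL(3,q)$-invariance of $\Lambda$ (outlined above but requiring care) and carrying out the Hessian reduction modulo $F$ cleanly enough to produce the explicit form of $\Phi$.
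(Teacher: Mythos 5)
Your proposal is correct, but it reaches the key exclusion by a genuinely different route. The paper dismisses $\Sigma_1,\ldots,\Sigma_{11}$ in a single sentence (these exclusions are already implicit in the preceding case analysis and the point-orbit distributions), whereas you rule them out explicitly via elementary invariants of the determinantal cubic (unique rank-one point, reducibility, node versus cusp) -- more self-contained, but not where the difficulty lies. For the decisive step, separating $\pi_c$ from $\Sigma_{12}$ and $\Sigma_{13}$, the paper uses the crudest possible invariant: the number of $\bF_q$-rational inflexion points. It computes that the inflexions of $C_c$ satisfy $(1-c)^2\theta^3-3(1-c)\theta+2=0$ with $\theta=\gamma/\beta$, invokes Dickson's criterion to show this has \emph{no} roots in $\bF_q$ exactly under the stated hypotheses, and compares with Lemma~\ref{lem:Sigma_13_o_13_inflexion}, which guarantees at least one rational inflexion (namely $(0,1,0)$) for $\Sigma_{12}$ and $\Sigma_{13}$. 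You instead construct the finer cube-class invariant $\Lambda=\Phi(\tau_1)/\Phi(\tau_2)$ in $k^\times/(k^\times)^3$; this is sound (your Hessian reduction agrees with the paper's inflexion equation, and the M\"obius computation does show $\Lambda$ changes by the cube $((c\tau_2+d)/(c\tau_1+d))^3$), and it is in fact secretly the same computation: $\frac{\sqrt{c}+1}{\sqrt{c}-1}$ is precisely the Dickson resolvent of the inflexion cubic, so ``$\Lambda$ is a non-cube'' is equivalent to ``no rational inflexions''. Your approach buys a sharper invariant of the sort the paper needs later anyway (Lemmas~\ref{lem:pi14equiv1213} and~\ref{lem:sigma14equiv} in effect decide when two such cube classes coincide), at the cost of the invariance verification you flag as delicate; the paper's approach buys brevity. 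One detail to watch in your version: when the nodal tangents of the two cubics generate different quadratic extensions of $\bF_q$ (e.g.\ $\pi_c$ with $c$ a square versus $\Sigma_{13}$), the two values of $\Lambda$ live in different fields and cannot be compared directly -- but there the discriminant class of the tangent cone already separates the orbits, and since $\Lambda=1$ is a cube in every field, your conclusion is unaffected.
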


\begin{proof}
Note that we only need to show that $\pi_c \not \in \Sigma_{12} \cup \Sigma_{13}$.
Consider the cubic curve $C_{c}$ obtained by setting the determinant $\alpha(\beta^2-2\beta\gamma+(1-c)\gamma^2)+\beta^2\gamma$ of $\pi_{c}$ equal to zero. 
The points of inflexion are precisely the points of $C_c$ for which $(\beta,\gamma) \neq (0,0)$ and $(1-c)^2\gamma^3 - 3(1-c)\beta^2\gamma + 2\beta^3 = 0$. 
Since $c \neq 1$, any such point has $\beta \neq 0$, so we may rewrite this equation as $(1-c)^2\theta^3-3(1-c)\theta +2=0$, where $\theta = \gamma/\beta$. 
By \cite{Dickson1906}, this equation has no solutions in $\bF_q$ if and only if $-3c$ is a square in $\bF_q$ and  $\frac{\sqrt{c}+1}{\sqrt{c}-1}$ is not a cube in $\bF_q(\sqrt{-3})$. 
In this case, $C_c$ has no $\bF_q$-rational points of inflexion, and so, by Lemma~\ref{lem:Sigma_13_o_13_inflexion}, $\pi_c$ is not equivalent to a plane in $\Sigma_{12}$ or $\Sigma_{13}$.
\end{proof}

\begin{lemma} \label{lem:pi14equiv1213}
Let $c \in \bF_q \setminus \{0,1\}$, and suppose that either $-3c$ is a non-square in $\bF_q$ or $\frac{\sqrt{c}+1}{\sqrt{c}-1}$ is a cube in $\bF_q(\sqrt{-3})$. 
Then $\pi_c \in \Sigma_{12}$ if $c$ is a square in $\bF_q$, and $\pi_c \in \Sigma_{13}$ otherwise. 
\end{lemma}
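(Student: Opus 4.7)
Under the hypothesis, Dickson's criterion (as used in the proof of Lemma~\ref{lem:pi14new}) supplies a root $\theta_0 \in \bF_q$ of the cubic $(1-c)^2\theta^3 - 3(1-c)\theta + 2 = 0$, equivalently an $\bF_q$-rational point of inflexion of $C_c$ distinct from $(0,1,0)$. The plan is to exploit $\theta_0$ to rewrite $\pi_c$ in the configuration of case (d-i-D) of Section~\ref{ssec:cased}, thereby placing $\pi_c$ in $\Sigma_{12} \cup \Sigma_{13}$.

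Concretely, I would first extract from the inflexion point $(\alpha_0, 1, \theta_0)$, where $\alpha_0 = -\theta_0/(1 - 2\theta_0 + (1-c)\theta_0^2)$, a new spanning triple $\pi_c = \langle x, y', z'\rangle$ together with rank-$1$ points $w', v' \in \cV(\bF_q)$ satisfying $w' = \cC_{y'} \cap \cC_{z'}$ and $z' \in \langle w', v'\rangle$; the hypothesis that $\theta_0 \in \bF_q$ is precisely what allows this to be done over the base field rather than only over a quadratic extension. Next, invoking sharp $3$-transitivity of the stabiliser of $\cC_{z'}$ (property~(C1) of Section~\ref{subsec:conic_properties}), I would produce an explicit $A \in \GL(3,q)$ whose conjugation action $M \mapsto AMA^T$ carries this configuration to the canonical one of case (d-i-D), sending $z'$ to $(0,0,0,1,0,b)$ for some $b \in \bF_q^\times$. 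By the case analysis already carried out in (d-i-D), the resulting matrix form of $\pi_c$ is~\eqref{Sigma12rep} or~\eqref{Sigma13rep} according as $b$ is a square or a non-square in $\bF_q$.

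Finally, by tracking $b$ as a rational function of $c$ and $\theta_0$ and using the cubic identity $(1-c)^2\theta_0^3 - 3(1-c)\theta_0 + 2 = 0$ to simplify, one shows that $b c^{-1}$ is a square in $\bF_q$. This gives $\pi_c \in \Sigma_{12}$ when $c$ is a square and $\pi_c \in \Sigma_{13}$ otherwise.

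The conceptual point is that a rational inflexion point of $C_c$ is precisely what enables the re-presentation of $\pi_c$ in the (d-i-D) setup. I expect the main obstacle to be the last step: explicitly writing down $A$, computing $b$, and applying the cubic identity to conclude $b c^{-1} \in (\bF_q^\times)^2$. This part is computational rather than structural, but requires care in setting up the change of basis so that the cubic identity acts directly on the expression for $b$.
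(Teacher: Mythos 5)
Your outline shares the paper's starting point -- Dickson's criterion guarantees, under the stated hypothesis, a rational root of a cubic attached to $\pi_c$ -- but it diverges in how that root is exploited, and the divergence is where the proof breaks down. The paper's argument is a direct construction: it takes a root $v\in\bF_q$ of $4(1-c)^2v^3+3c(1-c)v-c^2=0$ (same solvability criterion, different cubic from the inflexion cubic you use) and writes down an explicit matrix $X$ with $X\pi_{12}X^T=\pi_c$ when $c$ is a square, and a second explicit $X$ with $X\pi_{13}X^T=\pi_c$ when $c$ is a non-square. Your plan instead routes through the geometry: from a rational inflexion point of $C_c$ you propose to extract a new spanning triple $\langle x,y',z'\rangle$ realising the case (d-i-D) configuration. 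This step is asserted, not proved, and it is the crux. An inflexion point is an invariant of the discriminant cubic $C_c$, not of the plane as a net of conics, and nothing in the paper (or in your sketch) shows that a rational inflexion point can be promoted to a rank-$2$ point $y'$ lying on $t_{w'}(\cC_{y'})$ together with a $z'$ on the chord $\langle w',v'\rangle$. Indeed, Lemma~\ref{lem:Sigma_13_o_13_inflexion} shows that for the model planes only one of the (up to three) rational inflexion points of $\overline{C}_{12}$, $\overline{C}_{13}$ is the distinguished point $(0,1,0)$ playing the role of $y$ in (d-i-D); you would need to argue that the particular rational inflexion point handed to you by Dickson's criterion can always be moved into that role, which is exactly the kind of equivalence the lemma is trying to establish. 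There is also a degenerate case you pass over: your formula $\alpha_0=-\theta_0/(1-2\theta_0+(1-c)\theta_0^2)$ fails when the denominator $f_c(1,\theta_0)$ vanishes, i.e.\ when the direction $\theta_0$ corresponds to a line of type $o_9$ carrying no rank-$2$ point at all.

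Even granting the configuration step, the remainder is deferred rather than done. You acknowledge that computing $b$ as a function of $c$ and $\theta_0$ and verifying $bc^{-1}\in\Box$ is the ``main obstacle'', but that verification is precisely the content that separates the conclusion $\pi_c\in\Sigma_{12}$ from $\pi_c\in\Sigma_{13}$; without it the lemma is not proved. As written, the proposal establishes only that \emph{if} the two unexecuted steps can be carried out, the result follows. The paper's proof, by contrast, collapses both steps into the single verifiable identity $X\pi_{12}X^T=\pi_c$ (respectively $X\pi_{13}X^T=\pi_c$), with the square/non-square dichotomy built into the entries $v\sqrt{c}$ and $v\sqrt{\varepsilon c}$ of $X$. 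If you want to pursue your route, the essential missing ingredient is a lemma characterising which rank-$2$ points of a plane in $\Sigma_{12}\cup\Sigma_{13}$ can serve as the point $y$ of a (d-i-D) decomposition, and a proof that rational inflexion points of the discriminant cubic are among them.
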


\begin{proof}
We prove this by explicitly mapping $\pi_{12}$ or $\pi_{13}$ to $\pi_c$ (according to whether $c$ is a square or not). 
By \cite{Dickson1906}, the equation $4(1-c)^2v^3 + 3c(1 - c)v - c^2=0$ has a solution $v \in \bF_q$ if and only if $-3c$ is a non-square in $\bF_q$ or $\frac{\sqrt{c}+1}{\sqrt{c}-1}$ is a cube in $\bF_q(\sqrt{-3})$. 
If $c$ is a square in $\bF_q$, we may therefore define the matrix
\[
X = \left[ \begin{matrix} 1 & \frac{-zv}{v^2-z^2} & \frac{z^2}{v^2-z^2} \\ 
0 & v\sqrt{c} & z\sqrt{c} \\ 
0 & z & v \end{matrix} \right], 
\]
which satisfies $X\pi_{12}X^T = \pi_c$. 
Hence, $\pi_c \in \Sigma_{12}$ in this case. 
If $c$ is a non-square in $\bF_q$ then $\varepsilon c$ is a square, and so we can instead define
\[
X = \left[ \begin{matrix} 1 & \frac{-\epsilon zv}{\epsilon v^2-z^2} & \frac{z^2}{\epsilon v^2-z^2} \\ 
0 & v\sqrt{\varepsilon c} & z\sqrt{c/\varepsilon} \\ 
0 & z & v \end{matrix} \right].
\]
In this case, we have $X\pi_{13}X^T = \pi_c$ and hence $\pi_c \in \Sigma_{13}$.
\end{proof}

\begin{lemma}\label{lem:pi14char3}
Suppose that $q$ is a power of $3$, and let $c\in \bF_q\setminus\{0,1\}$. 
Then $\pi_c \in \Sigma_{12} \cup \Sigma_{13}$.
\end{lemma}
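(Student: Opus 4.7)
The plan is to reuse the matrix construction from the proof of Lemma~\ref{lem:pi14equiv1213} and verify that the existence of the auxiliary parameter $v$ (which there required Dickson's criterion) is automatic in characteristic $3$.

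First, I would specialise the cubic equation from the proof of Lemma~\ref{lem:pi14equiv1213}, namely
\[
4(1-c)^2v^3 + 3c(1-c)v - c^2 = 0,
\]
to characteristic $3$. Since $3=0$ and $4=1$ in $\bF_q$, this collapses to $(1-c)^2 v^3 = c^2$. Because $c\neq 1$, the coefficient $(1-c)^2$ is non-zero, so the equation reads $v^3 = c^2/(1-c)^2$. The Frobenius map $x\mapsto x^3$ is a bijection on $\bF_q$, so a (unique) solution $v\in\bF_q^\times$ always exists.

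With such a $v$ in hand, I would then pick any $z\in\bF_q$ with $z^2\neq v^2$ (resp. $z^2\neq\varepsilon v^2$ in the non-square case), noting that only finitely many values of $z$ are excluded. The matrix
\[
X = \left[ \begin{matrix} 1 & \frac{-zv}{v^2-z^2} & \frac{z^2}{v^2-z^2} \\ 0 & v\sqrt{c} & z\sqrt{c} \\ 0 & z & v \end{matrix} \right]
\]
of the proof of Lemma~\ref{lem:pi14equiv1213} (or its analogue with $\varepsilon$ in the non-square case) is then well-defined and invertible, and the identity $X\pi_{12}X^T = \pi_c$ (resp. $X\pi_{13}X^T = \pi_c$) is a polynomial identity whose derivation did not invoke the hypothesis $\mathrm{char}(\bF_q)\neq 3$ anywhere except to guarantee the existence of $v$. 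Thus $\pi_c\in\Sigma_{12}$ if $c$ is a square in $\bF_q$, and $\pi_c\in\Sigma_{13}$ otherwise.

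The only delicate point is the splitting into the square and non-square cases, which requires an appropriate square root: if $c$ is a square we use $\sqrt{c}$ directly, while if $c$ is a non-square we use that $\varepsilon c$ is a square and replace $\sqrt{c}$ by $\sqrt{\varepsilon c}$ and $\sqrt{c/\varepsilon}$ as in the non-square branch of the proof of Lemma~\ref{lem:pi14equiv1213}. I do not expect a genuine obstacle here: the characteristic-$3$ degeneracy of $-3$ and $\sqrt{-3}$, which made Lemma~\ref{lem:pi14new} and Lemma~\ref{lem:Sigma_13_o_13_inflexion} awkward in char $3$, is precisely what trivialises the solvability of the cubic in $v$, making every $\pi_c$ fall into one of $\Sigma_{12}$ or $\Sigma_{13}$.
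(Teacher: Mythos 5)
Your overall strategy is the same as the paper's: reduce to Lemma~\ref{lem:pi14equiv1213}. The paper does this in one line by observing that in characteristic $3$ one has $\bF_q(\sqrt{-3})=\bF_q$ and cubing is the Frobenius map, hence bijective, so every element is a cube and the hypothesis of Lemma~\ref{lem:pi14equiv1213} is automatic. Your direct observation that the auxiliary cubic degenerates to $(1-c)^2v^3=c^2$, which is always solvable because Frobenius is a bijection, is correct and amounts to the same thing.

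There is, however, a genuine error in the way you re-run the construction: $z$ is \emph{not} a free parameter. Writing $N=XMX^T$ for $M$ ranging over the plane $\pi_{12}$, the conditions $N_{13}=0$ and $N_{22}=cN_{33}$ do hold identically for the stated $x_{12},x_{13}$, but the third condition defining $\pi_c$, namely $N_{23}=N_{12}-N_{33}$, forces the pair $(v,z)$ to satisfy the two equations $\sqrt{c}\,(v^2+z^2-v)+2vz=0$ and $2\sqrt{c}\,vz+\sqrt{c}\,z+v^2+z^2=0$ (the exact analogue of the $(\psi,\theta)$ system written out explicitly in the proof of Lemma~\ref{lem:sigma14equiv}); the cubic in $v$ is the resultant of this system, and $z$ is then determined by $v$, not chosen at will. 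For a generic $z$ with $z^2\neq v^2$ the image $X\pi_{12}X^T$ is simply not $\pi_c$, so the step ``pick any such $z$'' fails, and the claim that the identity $X\pi_{12}X^T=\pi_c$ is a polynomial identity needing only the existence of $v$ is false. The repair is immediate --- either invoke Lemma~\ref{lem:pi14equiv1213} as a black box, noting its cube hypothesis is vacuous in characteristic $3$, or solve the full system for $(v,z)$ rather than only its resultant --- but as written this step is wrong.
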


\begin{proof} 
Here every element of $\bF_q(\sqrt{-3})$ is a cube, so the assertion follows from Lemma~\ref{lem:pi14equiv1213}.
\end{proof}

\begin{lemma}
If $q$ is not a power of $3$ then there exists $c\in \bF_q\setminus\{0,1\}$ such that the plane $\pi_c$ is not in any of the $K$-orbits $\Sigma_1,\ldots,\Sigma_{13}$. 
\end{lemma}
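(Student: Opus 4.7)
By Lemma~\ref{lem:pi14new}, it suffices to exhibit $c \in \bF_q \setminus \{0,1\}$ satisfying
\begin{enumerate}
\item[(a)] $-3c \in \Box$, and
\item[(b)] $\frac{\sqrt{c}+1}{\sqrt{c}-1}$ is not a cube in $E := \bF_q(\sqrt{-3})$.
\end{enumerate}
I would split according to whether $-3$ is a square in $\bF_q$ (equivalently, $q \equiv 1$ or $2 \pmod 3$, since $q$ is not a power of $3$), and in each case parametrise the problem by setting $t := \sqrt{c}$ and $u := \frac{t+1}{t-1}$, which is a M\"obius transformation.

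\textbf{Case 1:} $q \equiv 1 \pmod 3$, so $E = \bF_q$. Condition~(a) forces $c$ to be a non-zero square, so $t \in \bF_q \setminus \{0,\pm 1\}$. The map $t \mapsto u$ is a bijection from $\bF_q \cup \{\infty\} \setminus \{1\}$ to itself; the excluded values $t \in \{0,\pm 1,\infty\}$ correspond to $u \in \{-1,0,\infty,1\}$, so $u$ ranges bijectively over $\bF_q^* \setminus \{1,-1\}$ as $t$ ranges over $\bF_q \setminus \{0,\pm 1\}$. Since the cubes in $\bF_q^*$ form a subgroup of index $3$ and contain $\pm 1$, there are $2(q-1)/3 \ge 1$ non-cubes in $\bF_q^* \setminus \{1,-1\}$, each of which gives an admissible $c = t^2$.

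\textbf{Case 2:} $q \equiv 2 \pmod 3$, so $E = \bF_{q^2}$. Condition~(a) forces $c$ to be a non-square in $\bF_q^*$, so $t \in \bF_{q^2} \setminus \bF_q$ and $t^q = -t$. A direct computation gives
\[
u^q = \tfrac{t^q+1}{t^q-1} = \tfrac{1-t}{-(1+t)} = \tfrac{t-1}{t+1} = u^{-1},
\]
so $u$ lies in the norm-$1$ subgroup $\mu_{q+1} \le \bF_{q^2}^*$, which is cyclic of order $q+1$. As $t$ runs over the $q-1$ nonzero solutions of $t^q=-t$ (a $1$-dimensional $\bF_q$-space minus $0$), the injective map $t \mapsto u$ hits precisely the $q-1$ elements of $\mu_{q+1} \setminus \{1,-1\}$ (the excluded values again coming from $t \in \{0,\infty\}$). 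Because $q \equiv 2 \pmod 3$, one has $3 \mid q+1$, so the cubes in $\mu_{q+1}$ form a subgroup of index $3$, giving $2(q+1)/3 \ge 2$ non-cubes (none equal to $\pm 1$, which are cubes). Finally, since $\gcd(3,q-1)=1$, a norm-$1$ element that is a cube in $\bF_{q^2}^*$ is automatically a cube in $\mu_{q+1}$ (its cube root $v$ satisfies $N(v)^3=1$, hence $N(v)=1$), so such a non-cube $u$ is a non-cube in all of $E^*$, producing an admissible $c$.

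\textbf{Main obstacle.} The only subtle point is Case~2: one must identify the target of $t \mapsto u$ with the norm-$1$ subgroup and relate cubes in $\bF_{q^2}^*$ to cubes in $\mu_{q+1}$. Once the computation $u^q=u^{-1}$ is in hand and the coprimality $\gcd(3,q-1)=1$ is exploited, the conclusion reduces to a simple index count. Everything else is a routine M\"obius-transformation bookkeeping argument.
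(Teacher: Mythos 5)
Your proof is correct, and it reaches the conclusion by a genuinely different mechanism than the paper, even though both start from the same reduction via Lemma~\ref{lem:pi14new} and both exploit the M\"obius transformation linking $\sqrt{c}$ to $\frac{\sqrt{c}+1}{\sqrt{c}-1}$. The paper runs the M\"obius map \emph{backwards}: it observes that the bad values of $c$ are exactly those with $\sqrt{c}=\frac{d^3+1}{d^3-1}$ for some $d\in\bF_q(\sqrt{-3})$, and then bounds the size of the image of $f(d)=\bigl(\frac{d^3+1}{d^3-1}\bigr)^2$ (a six-to-one map away from $\{d:d^7=d\}$), comparing it with the number of squares (resp.\ non-squares) in $\bF_q$ to conclude some $c$ is missed. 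You instead run the map \emph{forwards}: you identify exactly which group the admissible values $u=\frac{t+1}{t-1}$ sweep out --- $\bF_q^*\setminus\{\pm1\}$ when $q\equiv 1\pmod 3$, and the norm-one subgroup $\mu_{q+1}\setminus\{\pm1\}$ when $q\equiv 2\pmod 3$ (via the clean computation $u^q=u^{-1}$) --- and then count non-cubes in that cyclic group, using $\gcd(3,q-1)=1$ to pass from non-cubes of $\mu_{q+1}$ to non-cubes of $\bF_{q^2}^*$. Your version buys a sharper outcome (an exact count, $2(q-1)/3$ or $2(q+1)/3$, of admissible $u$, hence of admissible $c$ up to the sign of $\sqrt{c}$) and avoids the slightly fiddly bookkeeping of the six-to-one fibres and the exceptional set $\{d:d^7=d\}$; the paper's version has the minor advantage of not needing the norm-subgroup analysis in the inert case. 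All the individual steps you use --- the image of the excluded points $t\in\{0,\pm1,\infty\}$, the fact that $\pm1$ are cubes, the injectivity of $t\mapsto u$ on $\{t:t^q=-t,\ t\ne 0\}$ onto $\mu_{q+1}\setminus\{\pm1\}$, and the norm argument showing a cube root of a norm-one element has norm one --- check out.
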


\begin{proof}
By Lemma~\ref{lem:pi14new}, it suffices to show that there exists $c\in \bF_q$ such that $-3c$ is a square in $\bF_q$ and $\frac{\sqrt{c}+1}{\sqrt{c}-1}$ is not a cube in $\bF_q(\sqrt{-3})$.
The element $\frac{\sqrt{c}+1}{\sqrt{c}-1}$ is a cube in $\bF_q(\sqrt{-3})$ if and only if there exists $d\in \bF_q(\sqrt{-3})$ such that $\sqrt{c}=\frac{d^3+1}{d^3-1}$. 
The function $f(d)= \big(\frac{d^3+1}{d^3-1}\big)^2$ satisfies $f(d)=f(e)$ if and only if $d^3=e^3$ or $d^3=e^{-3}$, and is therefore six-to-one on $\bF_q(\sqrt{-3})\backslash\{d:d^7=d\}$. 
Furthermore, $f(d)$ is a non-zero square in $\bF_q$ if and only if $d^{3(q-1)}=1$ and $d^6\ne 1$, and a non-square in $\bF_q$ if and only if $d^{3(q+1)}=1$ and $d^6\ne 1$.
If $q\equiv 1\pmod 3$ then $\bF_q(\sqrt{-3})=\bF_q$, and $f(d)$ is a non-zero square for every $d\in \bF_q$ such that $d^7\ne d$. 
Hence, there are $\frac{q-7}{6}+1$ elements of $\bF_q$ in the image of $f$, so there exists $c\in \bF_q$ such that $c$ (and hence $-3c$) is a square in $\bF_q$ and $\frac{\sqrt{c}+1}{\sqrt{c}-1}$ is a non-cube in $\bF_q(\sqrt{-3})$.
If $q\equiv 2\pmod 3$ then $\bF_q(\sqrt{-3})=\bF_{q^2}$. 
Since $\gcd(q^2-1,3(q+1))=q-1$, there are $q-5$ solutions to $d^{3(q+1)}=1$, $d^6\ne 1$, so there are $\frac{q-5}{6}$ non-squares of $\bF_q$ in the image of $f$. 
Hence, there exists $c\in \bF_q$ such that $c$ is a non-square (so $-3c$ is a square) in $\bF_q$ and $\frac{\sqrt{c}+1}{\sqrt{c}-1}$ is a non-cube in $\bF_q(\sqrt{-3})$.
\end{proof}

\begin{lemma} \label{lem:sigma14equiv}
Suppose that $q$ is not a power of $3$ and let $c,d \in \bF_q\setminus\{0,1\}$ such that $-3c$ and $-3d$ are both squares in $\bF_q$. 
Then the planes $\pi_{c}$ and $\pi_{d}$ belong to the same $K$-orbit if one of the following conditions holds: 
\begin{itemize}
\item[(i)] $cd=1$, 
\item[(ii)] $\big(\frac{\sqrt{c}-1}{\sqrt{c}+1}\big)\big(\frac{\sqrt{d}-1}{\sqrt{d}+1}\big)$ is a cube in $\bF_q(\sqrt{-3})$, 
\item[(iii)] $\big(\frac{\sqrt{c}+1}{\sqrt{c}-1}\big)\big(\frac{\sqrt{d}-1}{\sqrt{d}+1}\big)$ is a cube in $\bF_q(\sqrt{-3})$.
\end{itemize}
\end{lemma}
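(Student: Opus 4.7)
The plan is to prove each of the three sufficient conditions separately by exhibiting an explicit matrix $X\in\GL(3,q)$, together with a linear substitution of the parameters $(\alpha,\beta,\gamma)\mapsto(\alpha',\beta',\gamma')$, satisfying
\[
X\,M_c(\alpha,\beta,\gamma)\,X^T = M_d(\alpha',\beta',\gamma'),
\]
where $M_c$ denotes the matrix representative of the plane $\pi_c$ constructed in case~(d-i-E). This mirrors the constructive strategy used for $\pi_{12}$ and $\pi_{13}$ in Lemma~\ref{lem:pi14equiv1213}.

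For condition~(i), the natural candidate is the matrix that interchanges the roles of the two points $w$ and $v$ on the conic $\cC_z$ used to build $\pi_c$. Since $c$ parametrises (essentially) a cross-ratio on this conic determined by $w$, $v$, and the two intersections of $\pt{z_c,w}$ and $\pt{z_c,v}$ with $\cC_z$, swapping $w$ and $v$ inverts the parameter. I would write down the induced monomial matrix $X_{(i)}\in\GL(3,q)$ (supported essentially on rows and columns $\{2,3\}$) and verify $X_{(i)}M_cX_{(i)}^T = M_{1/c}$, up to an explicit linear substitution of parameters, by direct calculation.

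For conditions~(ii) and~(iii), set $\tau_e := \frac{\sqrt{e}-1}{\sqrt{e}+1}$ for $e\in\{c,d\}$; these lie in $\bF_q(\sqrt{-3})$ because $-3c$ and $-3d$ are squares in $\bF_q$. Choose $\lambda\in\bF_q(\sqrt{-3})$ with $\lambda^3=\tau_c\tau_d$ for case~(ii), or $\mu\in\bF_q(\sqrt{-3})$ with $\mu^3=\tau_c^{-1}\tau_d$ for case~(iii); such cube roots exist by hypothesis. Motivated by the factorisation over $\bF_q(\sqrt{-3})$ of the inflexion cubic $(1-c)^2\theta^3 - 3(1-c)\theta + 2 = 0$ from the proof of Lemma~\ref{lem:pi14new}, I would write down candidate matrices $X_{(ii)}$ and $X_{(iii)}$ with entries polynomial in $\sqrt{-3},\sqrt{c},\sqrt{d},\lambda$ (respectively $\mu$), compute $X M_c X^T$ in each case, and identify the resulting matrix with $M_d$ after a linear change of parameters.

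The main obstacle is the descent from $\bF_q(\sqrt{-3},\sqrt{c},\sqrt{d})$ to $\bF_q$: a priori, the candidate matrices $X_{(ii)}$ and $X_{(iii)}$ have entries in a proper extension of $\bF_q$, but we need a representative in $\GL(3,q)$. The cube hypothesis is essential here. Writing $\sigma$ for the non-trivial automorphism of $\bF_q(\sqrt{-3})/\bF_q$, one checks that $X^{\sigma} = \zeta X$ for some $\zeta\in\bF_q(\sqrt{-3})^{\times}$, namely a primitive cube root of unity multiplied by a unit depending on the squareness of $c$ and $d$ in $\bF_q$. Because $M\mapsto XMX^T$ defines only a projective action, such $\sigma$-semi-invariance suffices: by Hilbert~90 we may rescale $X$ by a suitable element of $\bF_q(\sqrt{-3})^{\times}$ to obtain a matrix with entries in $\bF_q$ representing the same element of $K=\alpha(\PGL(3,q))$. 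Verifying the explicit polynomial identity that underlies this descent is the delicate step of the proof.
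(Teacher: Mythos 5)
Your treatment of condition (i) matches the paper's in spirit: the paper exhibits the explicit monomial-plus-correction matrix
\[
X = \npmatrix{1 & -d & -1 \\ 0 & 0 & -d \\ 0 & -d & 0},
\]
which swaps the roles of $w$ and $v$ exactly as you predict, and one verifies $X\pi_cX^T=\pi_d$ directly. Note, though, that the first row is \emph{not} $(1,0,0)$; the correction entries $-d,-1$ are needed, so "supported essentially on rows and columns $\{2,3\}$" undersells the verification slightly. This part of your plan would go through.

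For conditions (ii) and (iii), however, there is a genuine gap: the entire mathematical content is deferred. You never write down the candidate matrices $X_{(ii)}$, $X_{(iii)}$, never verify $XM_cX^T=M_d$, and never verify the semi-invariance $X^\sigma=\zeta X$ on which your Hilbert~90 descent rests. The descent framework is not the hard part (indeed, since $-3c$ and $-3d$ are squares, $\sqrt{c}$ and $\sqrt{d}$ already lie in $\bF_q(\sqrt{-3})$, so everything happens in a single quadratic extension and the cocycle condition would be routine \emph{if} the semi-invariance held); the hard part is producing a matrix for which it holds, and that is exactly equivalent to the $\bF_q$-rationality question you are trying to sidestep. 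The paper takes the opposite, and more direct, route: it stays over $\bF_q$ throughout, writing the candidate transformation as
\[
X = \npmatrix{1 & \pm x_{12} & x_{13} \\ 0 & \pm\theta\sqrt{d/c} & \psi \\ 0 & \pm\psi/\sqrt{cd} & \theta}
\]
(legitimate since $cd$ and $d/c$ are squares in $\bF_q$), derives from $X\pi_cX^T=\pi_d$ a system of two quadratics in $(\psi,\theta)$, eliminates $\psi$ via a resultant to obtain a single cubic in $\theta$ with square discriminant, and then invokes Dickson's criterion to convert solvability of that cubic over $\bF_q$ into precisely the cube conditions (ii) and (iii). That resultant-plus-discriminant computation is where the hypothesis actually gets used, and nothing in your proposal supplies a substitute for it. As written, your argument for (ii) and (iii) is a strategy statement, not a proof.
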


\begin{proof}
If $cd=1$ then $X\pi_{c}X^T=\pi_{d}$ for 
\[
X = \left[ \begin{matrix} 1 & -d & -1 \\ 0 & 0 & -d \\ 0 & -d & 0 \end{matrix} \right], 
\]
so $\pi_c$ and $\pi_d$ belong to the same $K$-orbit under condition~(i).
Now consider conditions~(ii) and~(iii). 
Since $-3c$ and $-3d$ are both squares in $\bF_q$, so are $cd$ and $d/c$. 
Hence, we may instead define 
\[
X = \left[ \begin{matrix} 1 & \pm x_{12} & x_{13} \\ 0 & \pm\theta\sqrt{d/c} & \psi \\ 0 & \pm\psi/\sqrt{cd} & \theta \end{matrix} \right]
\]
for some $x_{12}$, $x_{13}$, $\psi$, $\theta$. 
To satisfy $X\pi_{c}X^T=\pi_{d}$, we require that
\[
x_{12} = \frac{\psi^3 + 2\psi^2\theta +d\psi\theta^2}{\sqrt{cd}(\psi^2 - d\theta^2)}, 
\quad 
x_{13} = -\theta-1-\frac{2\theta^2(\psi+d\theta)}{\psi^2-d\theta},
\]
and that $\psi$, $\theta$ are common solutions to 
\begin{align*}
0 &= \psi^2+2\psi\theta\pm\sqrt{d/c} \psi +d(\theta^2-\theta), \\
0 &= \psi^2/d+2\psi\theta+\psi +\theta^2-\theta\sqrt{d/c}.
\end{align*}
By calculating the resultant of the above polynomials with respect to $\psi$, we find that there is a solution if and only if 
\[
\theta^3-\frac{3d}{4c}\left(\frac{c-1}{d-1}\right)\theta-\frac{d}{4c}\sqrt{\frac{d}{c}}\left(\frac{(\sqrt{cd}\mp1)(c-1)}{(d-1)^2}\right) = 0
\]
has a solution $\theta \in \bF_q$. 
The discriminant of the above cubic is 
\[
(-3d)\left(\frac{3d(c-1)(1\mp\sqrt{d/c})}{4c(d-1)^2}\right)^2,
\]
which is a square in $\bF_q$ because $-3d$ and $d/c$ are both squares. 
Hence, by \cite{Dickson1906}, the cubic has a solution in $\bF_q$ if and only if 
\[
\frac{d\sqrt{d}(c-1)(\sqrt{d}+1)(\sqrt{c}\mp 1)}{8c\sqrt{c}(d-1)^2}
\]
is a cube in $\bF_q(\sqrt{-3})$, which is precisely when $\big(\frac{\sqrt{c}\pm 1}{\sqrt{c}\mp 1}\big)\big(\frac{\sqrt{d}-1}{\sqrt{d}+1}\big)$ is a cube in $\bF_q(\sqrt{-3})$.
\end{proof}

\begin{lemma} \label{Sigma14lemma}
All planes $\pi_c$ such that $\pi_c \notin \Sigma_1 \cup \dots \cup \Sigma_{13}$ belong to the same $K$-orbit. 
\end{lemma}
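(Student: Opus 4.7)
The plan is to reduce the statement to a trichotomy in a small cyclic group, leveraging Lemma~\ref{lem:sigma14equiv}. First, by Lemma~\ref{lem:pi14char3}, when $q$ is a power of $3$ every plane $\pi_c$ already lies in $\Sigma_{12} \cup \Sigma_{13}$, so the statement is vacuous. I therefore assume $q$ is not a power of $3$. By the contrapositive of Lemma~\ref{lem:pi14equiv1213}, a plane $\pi_c$ fails to lie in $\Sigma_1 \cup \cdots \cup \Sigma_{13}$ precisely when $-3c$ is a square in $\bF_q$ and $\tfrac{\sqrt{c}+1}{\sqrt{c}-1}$ is not a cube in $\bF_q(\sqrt{-3})$. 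Fix any two such parameters $c$ and $d$; the goal is to verify that one of conditions~(i)--(iii) of Lemma~\ref{lem:sigma14equiv} holds for the pair $(c,d)$.

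The key observation is structural: consider the quotient group $G := \bF_q(\sqrt{-3})^{*}/(\bF_q(\sqrt{-3})^{*})^{3}$. Since $-3$ is a square in $\bF_q$ exactly when $q\equiv 1 \pmod 3$, the field $\bF_q(\sqrt{-3})$ equals $\bF_q$ in that case (and $3 \mid q-1$), and otherwise equals $\bF_{q^2}$ (with $q \equiv -1 \pmod 3$, so $3 \mid q+1 \mid q^2-1$). In either scenario, $3$ divides $|\bF_q(\sqrt{-3})^{*}|$ and the subgroup of cubes has index $3$, so $G$ is cyclic of order~$3$.

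Let $a,b \in G$ denote the classes of $\tfrac{\sqrt{c}-1}{\sqrt{c}+1}$ and $\tfrac{\sqrt{d}-1}{\sqrt{d}+1}$. These are inverses of the non-cubes $\tfrac{\sqrt{c}+1}{\sqrt{c}-1}$ and $\tfrac{\sqrt{d}+1}{\sqrt{d}-1}$, so $a$ and $b$ are both non-trivial in $G$. In a group of order~$3$, the two non-trivial elements are the inverses of each other, so there are only two possibilities: either $a=b$, or $a=b^{-1}$. If $a=b$, then $a^{-1}b$ is the identity of $G$, i.e.\ a cube, so condition~(iii) of Lemma~\ref{lem:sigma14equiv} holds. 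If $a=b^{-1}$, then $ab$ is a cube, so condition~(ii) holds. In both cases Lemma~\ref{lem:sigma14equiv} gives $\pi_c$ and $\pi_d$ in the same $K$-orbit, which is what we wanted.

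The proof is almost entirely formal once Lemma~\ref{lem:sigma14equiv} is in hand; there is no serious obstacle. The only subtlety is the case analysis for $|\bF_q(\sqrt{-3})^{*}|$, which must be handled according to the residue of $q$ mod~$3$, but both branches yield the same conclusion that cubes form a subgroup of index exactly~$3$, which is what powers the argument.
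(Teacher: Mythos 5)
Your proof is correct and is essentially the paper's own argument: the paper writes each non-cube as $\omega^i$ times a cube with $i\in\{1,2\}$ and splits into the cases $i=j$ and $i\neq j$, which is exactly your trichotomy in the order-$3$ quotient group $\bF_q(\sqrt{-3})^{*}/(\bF_q(\sqrt{-3})^{*})^{3}$, feeding into conditions (iii) and (ii) of Lemma~\ref{lem:sigma14equiv} in the same way. The only cosmetic difference is your group-theoretic packaging (and the harmless overstatement ``precisely when'' where only the forward implication of Lemma~\ref{lem:pi14equiv1213} is needed).
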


\begin{proof}
By Lemma \ref{lem:pi14char3}, we may assume that $q$ is not a power of $3$. 
Suppose that $\pi_c$ and $\pi_d$ are two such planes. 
By Lemma~\ref{lem:pi14equiv1213}, $-3c$ and $-3d$ are both squares in $\bF_q$, so $cd$ is also a square in $\bF_q$.
Moreover, $\frac{\sqrt{c}+1}{\sqrt{c}-1}$ and $\frac{\sqrt{d}+1}{\sqrt{d}-1}$ are both non-cubes in $\bF_q(\sqrt{-3})$. 
Hence, $\frac{\sqrt{c}+1}{\sqrt{c}-1} = \omega^i c_1^3$ and $\frac{\sqrt{d}+1}{\sqrt{d}-1}=\omega^j d_1^3$ for some $c_1,d_1\in \bF_q(\sqrt{-3})$ and some $i,j\in \{1,2\}$, where $\omega$ is a primitive third root of unity. 
If $i=j$ then $\big(\frac{\sqrt{c}+1}{\sqrt{c}-1}\big)\big(\frac{\sqrt{d}-1}{\sqrt{d}+1}\big) = \big(\frac{d_1}{c_1}\big)^3$ is a cube in $\bF_q(\sqrt{-3})$, so $\pi_c$ and $\pi_d$ belong to the same $K$-orbit by Lemma~\ref{lem:sigma14equiv}(iii). 
If $i\ne j$ then $\big(\frac{\sqrt{c}-1}{\sqrt{c}+1}\big)\big(\frac{\sqrt{d}-1}{\sqrt{d}+1}\big)= \big(\frac{1}{c_1d_1}\big)^3$ is a cube in $\bF_q(\sqrt{-3})$, so $\pi_c$ and $\pi_d$ belong to the same $K$-orbit by Lemma \ref{lem:sigma14equiv}(ii).
\end{proof}

We denote the $K$-orbit arising from Lemma~\ref{Sigma14lemma} by $\Sigma_{14}$:
\[
\Sigma_{14}: \left[ \begin{matrix} \alpha& \beta & \cdot \\ \beta & c\gamma & \beta-\gamma \\ \cdot & \beta-\gamma & \gamma \end{matrix} \right], 
\]
where $q \not \equiv 0 \pmod 3$, $c \in \bF_q \setminus \{0,1\}$, $-3c$ is a square in $\bF_q$, and $\tfrac{\sqrt{c}+1}{\sqrt{c}-1}$ is a not a cube in $\mathbb{F}_q(\sqrt{-3})$.

\begin{lemma}
A plane belonging to the $K$-orbit $\Sigma_{14}$ has point-orbit distribution 
\[
[1, (q \mp 1)/2, (q \mp 1)/2, q^2 \pm 1], \quad \text{where } q \equiv \pm 1 \pmod 3.
\]
\end{lemma}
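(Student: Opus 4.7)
My plan is to reduce the computation of the point-orbit distribution to counting lines through the unique rank-$1$ point $x = (1,0,0,0,0,0)$ of a representative $\pi_c \in \Sigma_{14}$, classified by the $K$-orbit of the line. By Lemma~\ref{lem:nice}, every line through $x$ in $\pi_c$ belongs to $o_{8,1}$, $o_{8,2}$, or $o_9$; and by Table~\ref{table:line-rank-dist} these have rank-distributions $[1,1,q-1]$ with the unique rank-$2$ point in $\cP_{2,e}$, in $\cP_{2,i}$, or $[1,0,q]$, respectively. Hence the problem reduces to counting, for each projective pair $[\beta:\gamma]$, whether the corresponding line through $x$ meets the rank-$2$ locus in an exterior point, an interior point, or not at all.

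Parametrising the line through $x$ and $(0,\beta,0,c\gamma,\beta-\gamma,\gamma)$, the unique rank-$2$ point exists precisely when $D(\beta,\gamma):=\beta^2-2\beta\gamma+(1-c)\gamma^2\neq 0$, and is given by clearing denominators in $\alpha=-\beta^2\gamma/D$. I then apply Lemma~\ref{lem:extcriteria} to the resulting matrix: a direct computation of the three $2\times 2$ determinants yields $-|M_{11}|=D^3$, $-|M_{22}|=\beta^2D\gamma^2$, $-|M_{33}|=D\beta^2(\beta-\gamma)^2$. Since $D\neq 0$, the squareness of all three is controlled exactly by the squareness of $D$; so the rank-$2$ point lies in $\cP_{2,e}$ iff $D(\beta,\gamma)$ is a non-zero square in $\bF_q$.

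Next I split on $q\bmod 3$ using the fact that the defining condition $-3c\in\Box$ of $\Sigma_{14}$ combined with the standard identity ``$-3\in\Box$ iff $q\equiv 1\pmod 3$'' forces $c\in\Box$ exactly when $q\equiv 1\pmod 3$, and $c\notin\Box$ exactly when $q\equiv 2\pmod 3$. Substituting $s=\beta/\gamma-1$ (and handling $\gamma=0$ separately) reduces $D$ to $\gamma^2(s^2-c)$, so I must count $s\in\bF_q$ for which $s^2-c$ is zero, a non-zero square, or a non-square. This is a one-line Jacobsthal-type character sum: $\sum_{s\in\bF_q}\chi(s^2-c)=-1$ whenever $c\neq 0$. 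Tallying and adding back the $\gamma=0$ contribution then yields exactly $(q-1)/2$ points in each of $\cP_{2,e}$, $\cP_{2,i}$ when $q\equiv 1\pmod 3$ (there are two projective $[\beta:\gamma]$ with $D=0$, giving the two lines of type $o_9$), and $(q+1)/2$ in each when $q\equiv 2\pmod 3$ (no lines of type $o_9$). The rank-$3$ count $q^2\pm 1$ follows by subtraction from $|\pi|=q^2+q+1$.

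I expect no serious obstacle: the argument is essentially a determinant computation followed by a standard quadratic character sum. The most delicate point is simply keeping track of the case distinction: one must verify that the hypothesis ``$-3c\in\Box$'' really does determine the quadratic character of $c$ in the desired way, so that the number of lines of type $o_9$ through $x$ (which is $2$ or $0$ depending on whether the quadratic $D$ factors over $\bF_q$) matches the two cases in the stated distribution.
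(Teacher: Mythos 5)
Your proposal is correct and follows essentially the same route as the paper: both reduce the count of rank-$2$ points to the quadratic $D(\beta,\gamma)=\beta^2-2\beta\gamma+(1-c)\gamma^2$ (whose roots correspond to the paper's tangents $\beta=(1\pm\sqrt{c})\gamma$ at the double point, rational iff $c\in\Box$, i.e.\ iff $q\equiv 1\pmod 3$ since $-3c\in\Box$), and both settle exterior versus interior via Lemma~\ref{lem:extcriteria}. Your minor computations check out (the paper states the exteriority criterion as $-D$ being a nonzero square rather than $D$, but since the resulting split is exactly half-and-half either way, this sign discrepancy is immaterial), and your explicit character-sum tally simply makes precise the count the paper asserts.
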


\begin{proof}
Denote the above representative of $\Sigma_{14}$ by $\pi_{14}$. 
Consider the cubic curve defined by setting the determinant of $\pi_{14}$ equal to zero. 
The tangents through the double point $P:\beta=\gamma=0$ are $\beta=(1+\sqrt{c})\gamma$ and $\beta=(1-\sqrt{c})\gamma$, which are $\bF_q$-rational if and only if $c$ is a square in $\bF_q$. 
If $q\equiv 1 \pmod 3$ then $-3$ is a square, and thus $c$ is a square. 
Hence, there are $q-1$ points of rank $2$ in $\pi_{14}$. 
If $q\equiv -1 \pmod 3$ then $-3$ is a non-square, and thus $c$ is a non-square. 
In this case there are $q+1$ points of rank $2$ in $\pi_{14}$.
By Lemma~\ref{lem:extcriteria}, a rank-$2$ point of $\pi_{14}$ with $\beta\ne 0$ is exterior if and only if $(c-1)\gamma^2+2\beta\gamma-\beta^2$ is a non-zero square. 
This occurs for $(q+1)/2$ choices of $\beta,\gamma$ if $c$ is a non-square, and $(q-1)/2$ choices of $\beta,\gamma$ if $c$ is a square.
\end{proof}

{\bf (d-ii)} 
The only planes $\pi=\pt{x,y,z}$ with $\operatorname{rank}(x)=1$ and $\operatorname{rank}(y)=\operatorname{rank}(z)=2$ that we have not yet considered are those such that $y$ is not on the tangent $t_w(\cC_y)$ to the conic $\cC_y$ through the point $w=\cC_y\cap \cC_z$ in $\langle \cC_y \rangle$ (and likewise $z$ is not on $t_w(\cC_z)$).
We show that there exist such planes not belonging to any of the previously considered $K$-orbits only in the case $q \equiv 0 \pmod 3$, and that all of those planes form a single $K$-orbit.

We begin with a lemma concerning certain lines spanned by points of rank $2$. 
(Here the notation is as above, but $y$ and $z$ are {\em arbitrary} points of rank $2$.)

\begin{lemma} \label{lem:o1314}
Suppose that $\ell$ is a line in $\PG(5,q)$ spanned by two points $y$ and $z$ of rank $2$ such that $\cC_y \ne \cC_z$. 
Then $\ell$ is of type $o_{12}$, $o_{13,1}$, $o_{13,2}$, $o_{14,1}$ or $o_{14,2}$. 
Specifically, 
\begin{itemize}
\item $\ell$ is of type $o_{12}$ if and only if $y\in t_w(\cC_y)$ and $z\in t_w(\cC_z)$, 
\item $\ell$ is of type $o_{13,1}$ or $o_{13,2}$ if and only if $y\in t_w(\cC_y)$ and $z\notin t_w(\cC_z)$, 
\item $\ell$ is of type $o_{14,1}$ or $o_{14,2}$ if and only if $y\notin t_w(\cC_y)$ and $z\notin t_w(\cC_z)$, 
\end{itemize}
where $w= \cC_y\cap \cC_z$.
Furthermore, if $\ell$ is of type $o_{14,1}$ or $o_{14,2}$ then $\ell$ contains a third point $u$ of rank $2$, and at least one of $y$, $z$, $u$ is an external point to its respective conic $\cC_y$, $\cC_z$, $\cC_u$. 
Furthermore, $y\in \pt{\cC_y\cap \cC_z,\cC_y\cap \cC_u}$, $z\in \pt{\cC_y\cap \cC_z,\cC_z\cap \cC_u}$ and $u\in \pt{\cC_y\cap \cC_u,\cC_z\cap \cC_u}$.
\end{lemma}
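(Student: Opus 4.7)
The plan is to put $(y,z)$ into a normal form using the $K$-action, compute the cubic $f(\lambda,\mu) := \det M_{\lambda y + \mu z}$ whose vanishing locates the rank-$\le 2$ points of $\ell$, and then read off the orbit of $\ell$ from the factorisation of $f$ via the rank distributions in Table~\ref{table:line-rank-dist}. Since $\cC_y \ne \cC_z$, the pre-image lines $\ell_y,\ell_z \subset \PG(2,q)$ (with $\nu(\ell_y)=\cC_y$, $\nu(\ell_z)=\cC_z$) are distinct and meet in the single point $p_w = \nu^{-1}(w)$; by transitivity of $\PGL(3,q)$ on such ordered configurations I would fix $p_w=\pt{e_1}$, $\ell_y=\pt{e_1,e_2}$ and $\ell_z=\pt{e_1,e_3}$, so that $y = (y_0,y_1,0,y_3,0,0)$ with $y_0y_3 \ne y_1^2$, $z = (z_0,0,z_2,0,0,z_5)$ with $z_0z_5 \ne z_2^2$, and the tangent conditions translate to $y \in t_w(\cC_y) \iff y_3=0$ and $z \in t_w(\cC_z) \iff z_5=0$.

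A direct expansion then gives
\[
f(\lambda,\mu) = \lambda\mu\bigl[\lambda z_5(y_0y_3-y_1^2) + \mu y_3(z_0z_5-z_2^2)\bigr],
\]
so the three cases split cleanly along the vanishing pattern of $y_3,z_5$. If both vanish, $f\equiv 0$, and checking $2\times 2$ minors shows every point of $\ell$ has rank exactly $2$; Lemma~\ref{lem:extcriteria} then shows each is exterior, so $\ell \in o_{12}$ (the unique line orbit of constant rank $2$ with all exterior points). If exactly one vanishes, say $y_3=0$, then $f = -z_5 y_1^2\lambda^2\mu$ has $y$ as a double and $z$ as a simple projective root, giving rank distribution $[0,2,q-1]$ and so $\ell \in o_{13,1}\cup o_{13,2}$. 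If neither vanishes, $f$ has three distinct projective roots, yielding $y$, $z$ and a third rank-$2$ point $u = by - az$, where $a := z_5(y_0y_3-y_1^2)$ and $b := y_3(z_0z_5-z_2^2)$; hence rank distribution $[0,3,q-2]$ and $\ell \in o_{14,1}\cup o_{14,2}$. The three ``only if'' directions then follow by elimination, since these three cases partition the configurations of $(y,z)$ and yield mutually disjoint line-orbit types.

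For the final assertion I would rescale so that $y_3=z_5=1$ (so $a = y_0-y_1^2$, $b = z_0-z_2^2$) and solve $M_u v = 0$ to obtain $\ker M_u = \pt{(1,-y_1,-z_2)}$; the conic plane of $u$ is the set of symmetric matrices $M$ with $M(1,-y_1,-z_2)^T=0$, whence $\cC_u = \nu(\ell_u)$ where $\ell_u \subset \PG(2,q)$ is the line $X_1 = y_1 X_2 + z_2 X_3$. Intersecting $\ell_y,\ell_z,\ell_u$ pairwise then yields three distinct points of $\cV(\bF_q)$, namely $w=\nu(\pt{e_1})$, $\cC_y \cap \cC_u = \nu(\pt{y_1 e_1 + e_2})$ and $\cC_z \cap \cC_u = \nu(\pt{z_2 e_1 + e_3})$, and direct substitution confirms
\[
y = a\,w + \nu(\pt{y_1 e_1+e_2}),\quad z = b\,w + \nu(\pt{z_2 e_1+e_3}),\quad u = b\,\nu(\pt{y_1 e_1+e_2}) - a\,\nu(\pt{z_2 e_1+e_3}),
\]
which are the three collinearity claims. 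Finally, computing $-|M_{ii}(M_u)|$ for $i=1,2,3$ and applying Lemma~\ref{lem:extcriteria} shows that $y$, $z$, $u$ are exterior to their respective conics if and only if $-a$, $-b$, $ab$ respectively are non-zero squares in $\bF_q$; since their product is $(ab)^2$, a non-zero square, an even number (namely $0$ or $2$) of $y,z,u$ are interior, so at least one of them is exterior.

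The hardest step will be the nullspace computation in case~(iii): correctly passing from $M_u$ to the line $\ell_u \subset \PG(2,q)$ with $\cC_u = \nu(\ell_u)$, so that the explicit collinearity relations in $\PG(5,q)$ can be verified. Once $\cC_u$ is identified, the remaining verifications reduce to routine determinant and $2\times 2$ minor bookkeeping.
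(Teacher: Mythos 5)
Your proposal is correct and follows essentially the same route as the paper: both reduce the identification of $\ell$ to the classification of line orbits via rank and point-orbit distributions (Table~\ref{table:line-rank-dist}), with your normal-form computation of the determinant cubic $f(\lambda,\mu)$ and of $\ker M_u$ supplying exactly the ``direct calculations'' that the paper's proof leaves to the reader. The only blemish is cosmetic: in the case $y_3=0$, $z_5\neq 0$ the double root of $f=-z_5y_1^2\lambda^2\mu$ is $\lambda=0$, which corresponds to the point $z$ rather than $y$, but this does not affect the argument since only the resulting rank distribution $[0,2,q-1]$ is used.
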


\begin{proof}
The hyperplane spanned by the distinct conics $\cC_y$ and $\cC_z$ intersects $\cV(\bF_q)$ in the union of these two conics, so the line $\ell = \langle y,z \rangle$ does not intersect $\cV(\bF_q)$. 
Table~\ref{table:line-rank-dist} then implies that $\ell$ must have type $o_{10}$, $o_{12}$, $o_{13,1}$, $o_{13,2}$, $o_{14,1}$ or $o_{14,2}$. 
Type $o_{10}$ is ruled out by observing that a line of type $o_{10}$ lies in a conic plane, so any two points $y$ and $z$ of rank~$2$ on such a line have $\cC_y = \cC_z$, contradicting our assumption. 
The assertions about the remaining types may be verified by direct calculations using the representatives in Table~\ref{table:lines}.
\end{proof}

\begin{lemma}\label{lem:all_o_14}
Suppose that $\pi$ is a plane in $\PG(5,q)$ containing a point of rank~$1$ and spanned by points of rank at most $2$, with $\pi \not \in \Sigma_1 \cup \ldots \cup \Sigma_{14}$. 
If $\ell$ is a line in $\pi$ spanned by points of rank~$2$ and not containing a point of rank~$1$, then $\ell$ has type $o_{14,1}$ or $o_{14,2}$.
\end{lemma}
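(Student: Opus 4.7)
The plan is to pick any two rank-$2$ points $y,z\in\ell$ and show that (i) $\cC_y\ne\cC_z$, and (ii) with $w=\cC_y\cap\cC_z$, neither $y\in t_w(\cC_y)$ nor $z\in t_w(\cC_z)$. Granted these, Lemma~\ref{lem:o1314} immediately forces $\ell$ to be of type $o_{14,1}$ or $o_{14,2}$.

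For (i), if $\cC_y=\cC_z$ then $\ell\subset\langle\cC_y\rangle$, and because $\ell$ contains no rank-$1$ point it must be external to $\cC_y$ and hence of type $o_{10}$. Then $\pi=\langle x,\ell\rangle$ is either the conic plane $\langle\cC_y\rangle$ itself (giving $\pi\in\Sigma_1$) or fits case~(a) of Section~\ref{sec:one_rank_one_point_and_spanned} (giving $\pi\in\Sigma_6$), each contradicting the standing hypothesis $\pi\notin\Sigma_1\cup\dots\cup\Sigma_{14}$.

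For (ii), I would assume $y\in t_w(\cC_y)$ for contradiction (the case $z\in t_w(\cC_z)$ is symmetric) and split on the position of $x$. If $x=w$, then $\pi$ contains $t_w(\cC_y)$ together with the line $\langle w,z\rangle\subset\langle\cC_z\rangle$; this line is either $t_w(\cC_z)$ (so $\pi$ is the tangent plane of $\cV(\bF_q)$ at $w$ and $\pi\in\Sigma_7$) or a secant of $\cC_z$, producing a second rank-$1$ point and so $\pi\in\Sigma_1\cup\dots\cup\Sigma_5$. If $x\in\cC_y\setminus\{w\}$, the line $\langle x,y\rangle\subset\langle\cC_y\rangle$ either meets $\cC_y$ at a second point (yielding another rank-$1$ point and $\pi\in\Sigma_1\cup\dots\cup\Sigma_5$) or is tangent to $\cC_y$ at $x$, forcing $y\in t_x(\cC_y)$ and placing $\pi$ in case~(c) of Section~\ref{sec:one_rank_one_point_and_spanned}, so $\pi\in\Sigma_8\cup\Sigma_9\cup\Sigma_{10}$; the case $x\in\cC_z\setminus\{w\}$ is symmetric. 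Finally, if $x\notin\cC_y\cup\cC_z$, then $\pi$ is in case~(d), and the condition $y\in t_w(\cC_y)$ narrows this to case~(d-i), whose five sub-cases A--E place $\pi$ inside $\Sigma_1\cup\dots\cup\Sigma_{14}$. Every outcome contradicts the hypothesis.

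The main obstacle is keeping the case analysis in step~(ii) organised: the substance is that the assumption $y\in t_w(\cC_y)$ always forces $\pi$ into one of the configurations~(b), (c), or~(d-i) already classified in Sections~\ref{sec:three_rank_one_points}--\ref{sec:one_rank_one_point_and_spanned}, whose orbits lie entirely in $\Sigma_1\cup\dots\cup\Sigma_{14}$. Once (i) and (ii) are in hand, the conclusion that $\ell$ is of type $o_{14,1}$ or $o_{14,2}$ is immediate from Lemma~\ref{lem:o1314}.
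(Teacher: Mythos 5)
Your proposal is correct and follows essentially the same route as the paper: the paper's one-line proof ("follows from Lemma~\ref{lem:o1314} and the remarks preceding it") is precisely the observation that every spanning configuration $\langle x,y,z\rangle$ falling under cases (a)--(c) or (d-i) of Section~\ref{sec:one_rank_one_point_and_spanned} lands in $\Sigma_1\cup\dots\cup\Sigma_{14}$, so a plane outside those orbits forces $\cC_y\neq\cC_z$, $x\notin\cC_y\cup\cC_z$, $y\notin t_w(\cC_y)$ and $z\notin t_w(\cC_z)$, whence Lemma~\ref{lem:o1314} gives type $o_{14,1}$ or $o_{14,2}$. You have simply made that exhaustive case-check explicit, which is a faithful (and somewhat more careful) rendering of the intended argument.
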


\begin{proof}
The result follows from Lemma~\ref{lem:o1314} (and the remarks preceding it).
\end{proof}

Continuing with the above notation, we may now assume that all lines in $\pi$ spanned by points of rank~$2$ and not containing a point of rank~$1$ are of type $o_{14,1}$ or $o_{14,2}$. 
Without loss of generality, we may choose $p_x=(1,0,0)$, $p_w=(0,0,1)$, $\ell_y:X_0=0$ and $\ell_z:X_0=X_1$, where $(X_0,X_1,X_2)$ are the homogeneous coordinates in $\PG(2,q)$. 
Since $\pt{y,z}$ is of type $o_{14,1}$ or $o_{14,2}$, Lemma~\ref{lem:o1314} implies that $y\in \pt{w,y_1}$ and $z\in\pt{w,z_1}$ for some $y_1\in \cC_y$ and $z_1\in\cC_z$.
There are two cases to consider: (A) $p_{z_1} = \pt{p_x,p_{y_1}}\cap \ell_z$ and (B) $p_{z_1} \neq \pt{p_x,p_{y_1}}\cap \ell_z$.

\bigskip

{\bf (d-ii-A)} Here we may take $p_{z_1} = (1,1,0)$, and so $\pi$ is represented by the matrix
\[
\left[ \begin{matrix} \alpha+\gamma & \gamma & \cdot \\ \gamma & \beta+\gamma & \cdot \\ \cdot & \cdot & b\beta+c\gamma \end{matrix} \right]
\]
for some $b,c\in \bF_q$, where $y = (0,0,0,1,0,b)$ and $z = (1,1,0,1,0,c)$. 
However, by choosing $\beta,\gamma$ such that $b\beta+c\gamma=0$, we then obtain another point $u\in \pt{y,z}$ of rank~$2$ such that $x\in \cC_u$. 
Hence, this case has already been considered (see the discussion at the beginning of Section~\ref{ssec:cased}).

\bigskip

{\bf (d-ii-B)} In this case we may take $p_{z_1} = (1,1,1)$. 
We then have $\pi = \pi_{b,c}$ for some $b,c\in \bF_q$, where $\pi_{b,c}$ is the plane represented by the matrix 
\[
A_{b,c} = \left[ \begin{matrix} \alpha+\gamma & \gamma & \gamma \\ \gamma & \beta+\gamma & \gamma \\ \gamma & \gamma & b\beta+c\gamma \end{matrix} \right]. 
\]
In other words, $y = (0,0,0,1,0,b)$ and $z = (1,1,1,1,1,c)$. 
Note that if $c=1$ then $z$ has rank~$1$, and if $b=0$ then $y$ has rank~$1$, so we assume that $b(c-1)\ne 0$. 

\begin{lemma}\label{lem:nice2}
Suppose that $b\neq 0$ and $c\neq 1$, and let $\ell$ be a line in $\pi_{b,c}$ through the point $x = (1,0,0,0,0,0)$. 
Then $\ell$ is of type $o_{8,1}$ or $o_{8,2}$, unless $\ell$ contains a point with coordinates in 
\[
\{(\gamma,\gamma,\gamma,\beta+\gamma,\gamma, b\beta+c\gamma) : (\beta,\gamma)\in \PG(1,q), \; b\beta^2+(b+c)\beta\gamma+(c-1)\gamma^2=0\},
\]
in which case $\ell$ is of type $o_9$. 
\end{lemma}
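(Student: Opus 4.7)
The plan is to mirror the proof of Lemma~\ref{lem:nice}: compute the determinant of $A_{b,c}$ as a cubic form in $\alpha,\beta,\gamma$, then use this cubic to count the rank-$2$ points on each line through $x$ and read off the line type from the rank distribution using Table~\ref{table:line-rank-dist}.

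First I would expand $\det A_{b,c}$ along the first row. The goal is to isolate the dependence on $\alpha$, and I expect (and a direct expansion confirms) a clean factorisation of the form
\[
\det A_{b,c} \;=\; \alpha \cdot Q(\beta,\gamma) \;+\; \beta\gamma \cdot L(\beta,\gamma),
\]
where $Q(\beta,\gamma) = b\beta^2+(b+c)\beta\gamma+(c-1)\gamma^2$ is precisely the quadratic form appearing in the statement of the lemma, and $L(\beta,\gamma) = b\beta+(c-1)\gamma$. A general point of $\pi_{b,c}$ not equal to $x$ has coordinates $(\gamma,\gamma,\gamma,\beta+\gamma,\gamma,b\beta+c\gamma)$ with $(\beta,\gamma)\ne(0,0)$, and the line $\ell$ through $x$ and such a point consists of $x$ together with the points of this form as $\alpha$ varies over $\bF_q$.

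Next I would split into two cases according to whether $Q(\beta,\gamma)$ vanishes. If $Q(\beta,\gamma)\ne 0$, then the equation $\alpha Q(\beta,\gamma)+\beta\gamma L(\beta,\gamma)=0$ has exactly one solution in $\alpha$, so $\ell$ contains exactly one point of rank $\le 2$ distinct from $x$. Since $x$ itself has rank $1$ and $\pt{y,z}\cap\cV(\bF_q)=\emptyset$ is not in view here (we're inside a line through the unique rank-$1$ point of $\pi_{b,c}$), the rank distribution of $\ell$ is $[1,1,q-1]$, which forces $\ell\in o_{8,1}\cup o_{8,2}$ by Table~\ref{table:line-rank-dist}.

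If $Q(\beta,\gamma)=0$, then $\det A_{b,c}$ restricted to $\ell$ reduces to the constant (in $\alpha$) expression $\beta\gamma L(\beta,\gamma)$. The key auxiliary step is to verify that this constant is non-zero under our hypotheses $b\ne 0$, $c\ne 1$: the cases $\beta=0$ and $\gamma=0$ are immediately ruled out by the form of $Q$, and the simultaneous vanishing of $Q$ and $L$ leads (by substituting $\beta=-(c-1)\gamma/b$ into $Q$) to the identity $Q=-(c-1)\gamma^2/b$, which is non-zero. Hence $\ell$ contains no point of rank~$\le 2$ other than $x$, giving rank distribution $[1,0,q]$, which by Table~\ref{table:line-rank-dist} is exactly the orbit $o_9$. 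The only routine but slightly delicate part of the argument — and the one place one could make a sign error — is the algebraic verification that $Q$ and $L$ have no common non-trivial zero when $b(c-1)\ne 0$, so I would write that calculation out carefully.
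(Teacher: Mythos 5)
Your proposal is correct and follows essentially the same route as the paper: both decompose $\det A_{b,c}$ as $\alpha f_{b,c}(\beta,\gamma)+g_{b,c}(\beta,\gamma)$ with $f_{b,c}=Q$ and $g_{b,c}=\beta\gamma L$, verify that $f_{b,c}$ and $g_{b,c}$ have no common non-trivial zero when $b(c-1)\neq 0$, and read off the line types from the rank distributions $[1,1,q-1]$ and $[1,0,q]$ via Table~\ref{table:line-rank-dist}. Your explicit substitution $\beta=-(c-1)\gamma/b$ giving $Q=-(c-1)\gamma^2/b$ is just a spelled-out version of the paper's observation that none of the three points of $\cZ(g_{b,c})$ lies on $\cZ(f_{b,c})$.
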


\begin{proof}
The determinant of the matrix $A_{b,c}$ is $\alpha f_{b,c}(\beta,\gamma)+g_{b,c}(\beta,\gamma)$, where
\[
f_{b,c}(\beta,\gamma) = b\beta^2+(b+c)\beta\gamma+(c-1)\gamma^2, \quad 
g_{b,c}(\beta,\gamma) = \beta\gamma(b\beta+(c-1)\gamma).
\]
Since $b\neq 0$ and $c\ne 1$, the zero locus $\cZ(g_{b,c})$ of $g_{b,c}$ consists of three distinct points in $\PG(1,q)$, and none of these points lies on $\cZ(f_{b,c})$. 
Hence, if $(\beta_0,\gamma_0)\in \PG(1,q)$ with $f_{b,c}(\beta_0,\gamma_0)=0$, then $g_{b,c}(\beta_0,\gamma_0) \neq 0$. 
For such $(\beta_0,\gamma_0)$, the line through $x$ and the point of $\pi_{b,c}$ parameterised by $(0,\beta_0,\gamma_0)$ contains no points of rank~$2$, and is therefore a line of type $o_9$ (by Table~\ref{table:line-rank-dist}).
If $f_{b,c}(\beta_0,\gamma_0)\neq 0$ then this line contains exactly one point of rank~$2$, so is of type $o_{8,1}$ or $o_{8,2}$. 
\end{proof}

\begin{lemma}\label{lem:o13}
If the plane $\pi_{b,c}$ does not belong to any of the $K$-orbits $\Sigma_1,\dots,\Sigma_{14}$ then $\pi_{b,c}$ contains exactly $q$ points of rank~$2$.
\end{lemma}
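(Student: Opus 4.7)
My plan is to apply Lemma~\ref{lem:nice2} to count the rank-$2$ points of $\pi_{b,c}$ directly, then rule out the cases giving $q\pm 1$ rank-$2$ points by exhibiting such planes inside the already-classified orbits $\Sigma_{12}$, $\Sigma_{13}$, or $\Sigma_{14}$. By Lemma~\ref{lem:nice2}, each of the $q+1$ lines in $\pi_{b,c}$ through the unique rank-$1$ point $x=(1,0,0,0,0,0)$ is of type $o_{8,1}$ or $o_{8,2}$ (contributing exactly one rank-$2$ point other than $x$) when the corresponding $(\beta,\gamma)\in\PG(1,q)$ does not vanish $f_{b,c}$, and is of type $o_9$ (contributing none) when it does. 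Hence the number of rank-$2$ points in $\pi_{b,c}$ equals $(q+1)-k$, where $k\in\{0,1,2\}$ is the number of projective zeros of
\[
f_{b,c}(\beta,\gamma)=b\beta^2+(b+c)\beta\gamma+(c-1)\gamma^2
\]
in $\PG(1,q)$. A quick computation gives the discriminant of $f_{b,c}$ as $(b-c)^2+4b$, which is zero, a non-zero square, or a non-square in $\bF_q$ precisely when $k=1$, $k=2$, or $k=0$, respectively.

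I would then establish the contrapositive: if $k\ne 1$, then $\pi_{b,c}$ is $K$-equivalent to a plane in $\Sigma_{12}\cup\Sigma_{13}\cup\Sigma_{14}$. When $k=2$, the tangent cone of the cubic $\cZ(\alpha f_{b,c}+g_{b,c})$ at the double point $x$ factors over $\bF_q$ into two distinct linear forms; this matches the tangent structure at the double point $P$ in Lemma~\ref{lem:Sigma_13_o_13_inflexion} for $\pi_{12}$ (and for $\pi_c\in\Sigma_{14}$ when $c$ is a square, i.e. when $q\equiv 1\pmod 3$). When $k=0$, the tangent cone is $\bF_q$-irreducible, matching $\pi_{13}$ (or $\pi_c\in\Sigma_{14}$ with $q\equiv -1\pmod 3$). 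I would then construct an explicit transforming matrix of the block form
\[
X=\npmatrix{1 & \ast & \ast \\ 0 & \ast & \ast \\ 0 & \ast & \ast}\in\GL(3,q)
\]
(which fixes $x$ and preserves the pencil of lines through $x$) that carries the tangent pair of $\pi_{b,c}$ to the tangent pair of $\pi_{12}$, $\pi_{13}$, or an appropriate $\pi_c$, in the spirit of the explicit change of basis used in the proof of Lemma~\ref{lem:pi14equiv1213}.

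The main obstacle will be ensuring that such a transforming matrix actually exists over $\bF_q$. After fixing the tangent cone, the remaining entries of $X$ are forced to satisfy a system that reduces to a single cubic equation in $\bF_q$ (or in $\bF_q(\sqrt{-3})$), whose solvability is governed by exactly the discriminant-and-cubic-residue conditions analysed in \cite{Dickson1906} and invoked in Lemmas~\ref{lem:pi14new}--\ref{Sigma14lemma}. Carefully splitting the $k=2$ and $k=0$ arguments according to whether the tangent data of $\pi_{b,c}$ matches $\pi_{12}$/$\pi_{13}$ or some $\pi_c\in\Sigma_{14}$ -- and in the latter sub-case checking that the cubic-residue condition distinguishing $\Sigma_{14}$ from $\Sigma_{12}\cup\Sigma_{13}$ is preserved -- is where the technical work concentrates. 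Once this is done, the hypothesis $\pi_{b,c}\notin\Sigma_1\cup\cdots\cup\Sigma_{14}$ forces $k=1$ and therefore exactly $q$ rank-$2$ points, as claimed.
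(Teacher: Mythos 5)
Your first step --- counting the rank-$2$ points of $\pi_{b,c}$ as $(q+1)-k$, where $k$ is the number of zeros of $f_{b,c}$ in $\PG(1,q)$, via Lemma~\ref{lem:nice2} --- matches the paper, and your discriminant computation $(b-c)^2+4b$ is consistent with what is used later in Lemma~\ref{lem:o13b}. But the core of your argument is a genuine gap: to rule out $k=0$ and $k=2$ you propose to exhibit, for every such pair $(b,c)$, an explicit matrix $X$ carrying $\pi_{b,c}$ onto $\pi_{12}$, $\pi_{13}$ or some $\pi_c$, and you defer precisely the question of whether such an $X$ exists over $\bF_q$ to ``where the technical work concentrates''. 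Matching tangent cones at the double point is only a necessary condition for equivalence, and the solvability analysis you gesture at (a cubic in $\bF_q$ governed by the conditions of \cite{Dickson1906}) was already delicate for the one-parameter family $\pi_c$ in Lemma~\ref{lem:pi14equiv1213}; for the two-parameter family $\pi_{b,c}$ nothing in your proposal guarantees the relevant cubic-residue conditions hold in all the $k\ne 1$ cases. In effect you are attempting to prove a much stronger statement (a complete classification of the planes $\pi_{b,c}$ with $k\ne 1$) than the lemma requires, and that stronger statement is not established.

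The paper's proof avoids all of this with a short parity argument that uses the hypothesis $\pi_{b,c}\notin\Sigma_1\cup\dots\cup\Sigma_{14}$ directly. Fix a rank-$2$ point $y\in\pi_{b,c}$. Any line joining $y$ to another rank-$2$ point cannot pass through the rank-$1$ point $x$ (lines through $x$ carry at most one rank-$2$ point by Lemma~\ref{lem:nice2}), so by Lemma~\ref{lem:all_o_14} it has type $o_{14,1}$ or $o_{14,2}$ and therefore contains exactly three rank-$2$ points (Table~\ref{table:line-rank-dist}). These lines partition the rank-$2$ points other than $y$ into pairs, so the total number of rank-$2$ points is odd; among the three possible values $q+1$, $q$, $q-1$ from your first step, only $q$ is odd. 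You should replace your equivalence-construction plan with an argument of this kind, or else actually carry out the constructions and residue checks you have only sketched.
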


\begin{proof}
Consider the quadratic form $f_{b,c}$ defined in the proof of Lemma~\ref{lem:nice2}. 
Then $\cZ(f_{b,c})$ consists of either zero, one or two points in $\PG(1,q)$, and in these respective cases, the plane $\pi_{b,c}$ contains $q+1$, $q$, or $q-1$ points of rank~$2$. 
Choose a point $y\in \pi_{b,c}$ of rank~$2$. 
Since any line through $y$ and another point of rank~$2$ is of type $o_{14,1}$ or $o_{14,2}$, every such line contains three points of rank~$2$ (see Table~\ref{table:line-rank-dist}). 
Hence, such lines partition the set of rank-$2$ points other than $y$ into pairs, and so the number of points of rank $2$ in $\pi_{b,c}$ is odd and therefore equal to $q$.
\end{proof}

\begin{lemma}\label{lem:o13b}
If $q$ is not a power of $3$ then every plane that intersects $\cV(\bF_q)$ and is spanned by points of rank at most $2$ belongs to one of the $K$-orbits $\Sigma_1,\dots,\Sigma_{14}$.
\end{lemma}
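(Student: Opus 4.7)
Case (d-ii-B) produced the family of planes $\pi_{b,c}$ with $b\in\bF_q^\times$ and $c\in\bF_q\setminus\{1\}$, and all other configurations of a plane meeting $\cV(\bF_q)$ and spanned by points of rank at most $2$ have already been placed in $\Sigma_1\cup\dots\cup\Sigma_{14}$ in Sections~\ref{sec:three_rank_one_points}--\ref{sec:one_rank_one_point_and_spanned}. Lemma~\ref{lem:o13b} therefore reduces to showing that, when $q$ is not a power of $3$, every $\pi_{b,c}$ also lies in $\Sigma_1\cup\dots\cup\Sigma_{14}$. My plan is to argue by contradiction: suppose $\pi_{b,c}\notin \Sigma_1\cup\dots\cup\Sigma_{14}$. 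Then Lemma~\ref{lem:o13} forces $\pi_{b,c}$ to contain exactly $q$ points of rank~$2$; comparing with Table~\ref{table:pt-orbit-dist}, the only orbit in $\Sigma_1,\dots,\Sigma_{14}$ with this count is $\Sigma_{11}$ (of distribution $[1,q,0,q^2]$), while $\Sigma_{15}$ is excluded since $\pi_{b,c}$ is spanned by points of rank at most~$2$ whereas $\Sigma_{15}$ consists of planes not so spanned. Thus it suffices to exhibit an explicit $K$-equivalence $\pi_{b,c}\sim\Sigma_{11}$ to reach a contradiction.

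The first concrete step is to extract the arithmetic constraint that the ``$q$ rank-$2$ points'' condition places on $(b,c)$. By Lemma~\ref{lem:nice2} this requires the quadratic $f_{b,c}(\beta,\gamma)=b\beta^2+(b+c)\beta\gamma+(c-1)\gamma^2$ to have exactly one zero in $\PG(1,q)$, so its discriminant $(b-c)^2+4b$ must vanish. This forces $-b$ to be a non-zero square; writing $-b=s^2$ gives $c=-s^2\pm 2s$, parameterising the candidate family by a single element $s\in\bF_q^\times$. Next I would construct a matrix $X\in\GL(3,q)$ satisfying $XA_{b,c}X^T = A$ for $A$ a representative of $\Sigma_{11}$ (after a suitable linear re-labelling of $(\alpha,\beta,\gamma)$). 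One would set up $X$ with indeterminate entries -- possibly exploiting the fixed rank-$1$ point $x=\nu(\langle e_1\rangle)$ and the unique line of type $o_9$ through $x$ identified in Lemma~\ref{lem:nice2} to constrain part of $X$ -- and reduce the matrix equation to a polynomial system whose ultimate solvability over $\bF_q$ amounts to the existence of a root of an auxiliary cubic.

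The main obstacle is this last step. By Dickson's criterion \cite{Dickson1906}, as already used throughout case~(d-i-E) in Lemmas~\ref{lem:pi14new}--\ref{lem:sigma14equiv}, such a cubic admits a root in $\bF_q$ precisely when the appropriate combination of ``$-3$ is a square in $\bF_q$'' together with a cubic-residue condition in $\bF_q(\sqrt{-3})$ is satisfied, and these can always be arranged whenever $\operatorname{char}(\bF_q)\neq 3$. The delicate labour lies in choosing a parameterisation of $X$ that makes the reduction to a single cubic transparent, and in verifying that the resulting discriminant always falls in the correct residue class under the hypothesis $q\not\equiv 0\pmod 3$. Once such $X$ is produced, $\pi_{b,c}\in\Sigma_{11}$, contradicting the initial assumption, and the lemma follows.
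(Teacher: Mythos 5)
Your reduction to the family $\pi_{b,c}$ and the extraction of the discriminant condition $(b-c)^2=-4b$ via Lemma~\ref{lem:o13} are correct, and your target claim is in fact true a posteriori: a plane $\pi_{b,c}$ with exactly one rank-one point, exactly $q$ rank-two points, and spanned by points of rank at most $2$ can only land in $\Sigma_{11}$ if it lands anywhere in $\Sigma_1\cup\dots\cup\Sigma_{14}$ at all. But the proof of that claim is precisely what you have deferred, and it is the entire content of the lemma. Your stated reason for optimism --- that the Dickson cubic-residue condition ``can always be arranged whenever $\operatorname{char}(\bF_q)\neq 3$'' --- is unsupported, and the paper itself shows why this is dangerous: in the structurally identical situation of case (d-i-E), the analogous cubic $(1-c)^2\theta^3-3(1-c)\theta+2=0$ does \emph{not} always have a root in $\bF_q$ when $\operatorname{char}(\bF_q)\neq 3$, and that failure is exactly what produces the extra orbit $\Sigma_{14}$ (Lemmas~\ref{lem:pi14new} and~\ref{lem:pi14equiv1213}). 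So until you actually write down $X$, reduce $XA_{b,c}X^T=A_{11}$ to an auxiliary cubic, and verify its solvability for \emph{every} admissible $(b,c)$, you have no proof --- and no a priori guarantee that the verification will succeed rather than spawn yet another orbit.

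The paper's argument avoids any equivalence construction. Working contrapositively, it assumes $\pi_{b,c}\notin\Sigma_1\cup\dots\cup\Sigma_{14}$ and invokes Lemma~\ref{lem:all_o_14}: every line of $\pi_{b,c}$ spanned by rank-two points and missing the rank-one point must be of type $o_{14,1}$ or $o_{14,2}$, hence contains exactly one or three rank-two points, never two. Applying this to the line $\alpha+\gamma=0$, whose rank-two points are cut out by $\gamma^2((c-1)\gamma+(b+1)\beta)=0$, forces $b=-1$; combined with $(b-c)^2=-4b$ this gives $c=-3$; and the same count on the line $\alpha+\beta=0$, cut out by $\beta^2(3\gamma+\beta)=0$, then forces $3=0$, i.e.\ $q$ a power of $3$. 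If you want to salvage your route, you would need to carry out the explicit computation you describe; the counting argument on two well-chosen lines is considerably cheaper and is the intended proof.
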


\begin{proof} 
We prove the contrapositive. 
We have already established that any such plane {\em not} belonging to any of the $K$-orbits $\Sigma_1,\dots,\Sigma_{14}$ is equivalent to a plane $\pi_{b,c}$ with $b\neq 0$ and $c\neq 1$. 
Using again the notation from the proof of Lemma \ref{lem:nice2}, observe that the quadric $\cZ(f_{b,c})$ in $\PG(1,q)$ consists of one point if and only if $(b-c)^2 = -4b$. 
By Lemma~\ref{lem:o13} (and its proof), this must be the case.
Consider the line $\ell$ of $\pi_{b,c}$ parameterised by $(\alpha,\beta,\gamma)$ with $\alpha+\gamma=0$. 
Note that $\ell$ does not contain the point $x : \beta=\gamma=0$ of rank~$1$, so Lemma~\ref{lem:all_o_14} implies that $\ell$ either contains at most one point of rank~$2$, or has type $o_{14,1}$ or $o_{14,2}$, in which case it contains exactly three points of rank~$2$ (by Table~\ref{table:line-rank-dist}). 
The points of rank~$2$ on $\ell$ are determined by the solutions of 
$\gamma^2((c-1)\gamma+(b+1)\beta)=0$, so $\ell$ contains exactly two points of rank~$2$ unless $b=-1$. 
Therefore, we must have $b=-1$. 
Putting this into $(b-c)^2 = -4b$ yields $c=-3$ (because $c \neq 1$). 
However, then the points of rank~$2$ on the line of $\pi_{-1,-3}$ parameterised by $(\alpha,\beta,\gamma)$ with $\alpha+\beta=0$ are determined by the solutions of $\beta^2(3\gamma+\beta)=0$.  
This line, similarly, cannot contain exactly two points of rank~$2$, and so $q$ must be a power of $3$.
\end{proof}

By Lemma \ref{lem:o13b} and its proof, if the plane $\pi_{b,c}$ does {\em not} belong to one of the previously considered $K$-orbits $\Sigma_1,\dots,\Sigma_{14}$, then $q$ is a power of $3$ and 
$(b,c)=(-1,0)$. 
This yields the following representative of a new orbit which we call $\Sigma_{14}'$:
\[
\Sigma_{14}': \left[ \begin{matrix} \alpha+\gamma& \gamma & \gamma \\ \gamma & \beta+\gamma & \gamma \\ \gamma & \gamma & -\beta \end{matrix} \right], 
\quad \text{for } q \equiv 0 \; (\text{mod } 3).
\]

\begin{lemma} \label{Sigma14'-distribution}
If $q$ is a power of $3$ then the $K$-orbit $\Sigma_{14}'$ is distinct from the $K$-orbits $\Sigma_1,\dots,\Sigma_{14}$, and a plane in $\Sigma_{14}'$ has point-orbit distribution $[1,q,0,q^2]$. 
\end{lemma}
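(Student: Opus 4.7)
The plan is to establish the two claims separately: (i) every plane in $\Sigma_{14}'$ has point-orbit distribution $[1,q,0,q^2]$, and (ii) $\Sigma_{14}'$ is distinct from each of $\Sigma_1,\dots,\Sigma_{14}$.

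For (i), I would first compute $\det M$ for the representative matrix $M$, exploiting the characteristic-three identity $(\beta-\gamma)^2 = \beta^2+\beta\gamma+\gamma^2$ to get $\det M = \gamma^3 - (\alpha+\gamma)(\beta-\gamma)^2$. The only rank-one point is $x = \nu(e_1)$, corresponding to $\beta=\gamma=0$. Parameterising the $q+1$ lines through $x$ by $[\beta_0:\gamma_0]\in\PG(1,q)$, the line with $\beta_0=\gamma_0$ carries no additional rank-$\le 2$ point (the equation forces $\gamma_0^3=0$), while each of the remaining $q$ lines contributes exactly one rank-two point (solve for $\alpha$). Thus the plane has $q$ rank-two points and $q^2$ rank-three points. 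To show each rank-two point is exterior, I would apply Lemma~\ref{lem:extcriteria}. Using the rank-two constraint $\gamma^3=(\alpha+\gamma)(\beta-\gamma)^2$ together with char-three simplifications, the relevant minors evaluate to $-|M_{11}(M_y)| = (\beta-\gamma)^2$, $-|M_{22}(M_y)| = \gamma^2(\beta+\gamma)^2/(\beta-\gamma)^2$, and $-|M_{33}(M_y)| = \beta^2\gamma^2/(\beta-\gamma)^2$, each a square in $\bF_q$; the first is non-zero since $\beta\neq\gamma$ at any rank-two point, so all $q$ rank-two points belong to $\cP_{2,e}$.

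For (ii), Table~\ref{table:pt-orbit-dist} shows that when $q\equiv 0\pmod 3$ the only orbit among $\Sigma_1,\dots,\Sigma_{14}$ with distribution $[1,q,0,q^2]$ is $\Sigma_{11}$ (the orbit $\Sigma_{14}$ does not exist in this characteristic), so it suffices to separate $\Sigma_{14}'$ from $\Sigma_{11}$. I would do so by showing that every line in $\pi\in\Sigma_{14}'$ spanned by two distinct rank-two points contains a third rank-two point, making such a line of type $o_{14,1}$ or $o_{14,2}$ by Lemma~\ref{lem:o1314} and Table~\ref{table:line-rank-dist}, whereas the construction of $\Sigma_{11}$ in case~(d-i-C) exhibits the line $\pt{y,z}$ of type $o_{13,i}$ (spanned by two rank-two points) because $y\in t_w(\cC_y)$ but $z\notin t_w(\cC_z)$, via Lemma~\ref{lem:o1314}. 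For the claim about $\Sigma_{14}'$, I would change variables to $u=\beta-\gamma$, $v=\gamma$, $w=\alpha+\gamma$ to bring the cubic $\cZ(\det M)$ into the standard cuspidal form $v^3=u^2w$, whose smooth locus is parameterised by $s\mapsto (1,s,s^3)$ for $s\in\bF_q$ (with the cusp as the point at infinity). A Vandermonde-type computation gives collinearity of three distinct smooth points iff $s_1+s_2+s_3=0$, and in characteristic three, for any distinct $s_1,s_2\in\bF_q$ the value $s_3=-(s_1+s_2)$ is distinct from both (otherwise $s_j=-2s_i=s_i$, a contradiction); a quick check confirms that this chord misses the cusp, so the third intersection is a genuine rank-two point.

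The main obstacle is the characteristic-three collinearity argument for the smooth locus of the cuspidal cubic: one must verify that for each pair of smooth $\bF_q$-points on the cubic, the chord meets the cubic in a third distinct smooth point, rather than being tangent to the cubic or passing through the cusp. These computations are finite but require careful char-three arithmetic; once handled, the $o_{14,i}$ versus $o_{13,i}$ dichotomy (together with the case~(d-i-C) construction for $\Sigma_{11}$) immediately separates the two orbits.
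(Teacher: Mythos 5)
Your proposal is correct, and it follows the paper's overall strategy: compute the point-orbit distribution directly (the determinant $\gamma^3-(\alpha+\gamma)(\beta-\gamma)^2$, the count of one line through $x$ with no further rank-$\le 2$ point and $q$ lines with exactly one, and the verification via Lemma~\ref{lem:extcriteria} that all $q$ rank-two points are exterior all match the paper's computation), then use Table~\ref{table:pt-orbit-dist} to reduce distinctness to separating $\Sigma_{14}'$ from $\Sigma_{11}$, and finally separate the two by the line-type invariant ($o_{13,i}$ versus $o_{14,i}$ lines spanned by rank-two points). Where you genuinely diverge is in how you justify that every line of $\pi\in\Sigma_{14}'$ spanned by two rank-two points carries a third: the paper invokes Lemma~\ref{lem:all_o_14}, whose hypothesis ($\pi\notin\Sigma_1\cup\dots\cup\Sigma_{14}$) is precisely what is being established at that moment, so the reader must either unwind the case-(d-ii) derivation or check the claim directly on the representative; your change of variables to the cuspidal cubic $v^3=u^2w$, the parameterisation $s\mapsto(1,s,s^3)$ of its smooth locus, the collinearity criterion $s_1+s_2+s_3=0$, and the characteristic-three observation that $-(s_1+s_2)\notin\{s_1,s_2\}$ supply exactly that direct check in a self-contained way (and your verification that the chord avoids the cusp is the needed extra detail). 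Your treatment of the $\Sigma_{11}$ side, via the case-(d-i-C) construction and Lemma~\ref{lem:o1314}, replaces the paper's explicit computation that the points $(\alpha,\beta,\gamma)=(0,1,0)$ and $(0,0,1)$ of the $\Sigma_{11}$ representative span a line of rank distribution $[0,2,q-1]$; both are valid, though the paper's explicit check is arguably more robust since it does not require re-tracing the earlier configuration. In short: same skeleton, but your proof of the key sub-claim is more elementary and removes a step the paper leaves implicit.
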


\begin{proof}
Let $\pi$ denote the above representative of $\Sigma_{14}'$. 
The points of rank~$2$ in $\pi$ have the form 
\[
\left[ \begin{matrix} \frac{\gamma^3}{(\beta-\gamma)^2} & \gamma & \gamma \\ \gamma & \beta+\gamma & \gamma \\ \gamma & \gamma & -\beta \end{matrix} \right] 
\quad \text{with} \quad \beta \neq \gamma.
\]
The three principal minors of this matrix are 
\[
-\left(\frac{\gamma\beta}{\beta-\gamma}\right)^2, \; -(\gamma-\beta)^2, \; -\left(\frac{\gamma(\beta+\gamma)}{\beta-\gamma}\right)^2.
\] 
Hence, by Lemma~\ref{lem:extcriteria}, all points of rank~$2$ in $\pi$ are in $\cP_{2,e}$, and there are $q$ such points, as claimed. 
Table~\ref{table:pt-orbit-dist} now implies that $\pi$ does not belong to any of the previously considered $K$-orbits, with the possible exception of $\Sigma_{11}$. 
Therefore, it remains to show that $\pi \not \in \Sigma_{11}$. 
By Lemma~\ref{lem:all_o_14}, every line in $\pi$ spanned by points of rank~$2$ and not containing the unique point of rank~$1$ in $\pi$ has type $o_{14,1}$ or $o_{14,2}$. 
In particular, every such line contains exactly three points of rank~$2$. 
Consider, on the other hand, the representative of $\Sigma_{11}$ from Table~\ref{table:main}, namely
\[
\left[ \begin{matrix} \cdot & \beta & \gamma \\ \beta & \alpha & \alpha \\ \gamma & \alpha & \alpha+\gamma \end{matrix} \right].
\]
The points in this plane parameterised by $(\alpha,\beta,\gamma) = (0,1,0)$ and $(0,0,1)$ both have rank~$2$ and span a line with rank distribution $[0,2,q-1]$. 
Therefore, $\pi \not \in \Sigma_{11}$, as claimed. 
\end{proof}

\section{Planes meeting $\cV(\bF_q)$ in a point and not spanned by points of rank $\le 2$}\label{sec:final}

Finally, we consider the planes in $\PG(5,q)$ which contain exactly one point of rank~$1$ and are not spanned by points of rank at most $2$. 
Let $\pi$ be such a plane, and let $x$ denote the unique point of rank~$1$ in $\pi$. 
By assumption, at most one of the lines through $x$ in $\pi$ contains another point of rank at most $2$, so the other lines through $x$ contain no points of rank $2$. 
By inspecting the possible rank distributions in Table~\ref{table:line-rank-dist}, we see that each such line has type $o_9$. 
Hence, without loss of generality we can assume that $\pi$ contains the representative of the line orbit $o_9$ given in Table~\ref{table:lines}. 
In other words, we can assume that $x=(1,0,0,0,0,0)$ and that $\pi$ contains the rank-$3$ point $z = (0,0,1,1,0,0)$. 
We may then assume that $\pi = \langle x,y,z \rangle$ where $y$ has coordinates $(0,1,0,a,b,c)$ for some $a,b,c \in \mathbb{F}_q$. 
Hence, we may represent $\pi$ by the matrix 
\[
\left[ \begin{matrix} \alpha& \beta & \gamma \\ \beta & a\beta+\gamma & b\beta \\ \gamma & b\beta & c\beta \end{matrix} \right].
\]
The determinant of this matrix is $\alpha f(\beta, \gamma) + g(\beta, \gamma)$, where $f(\beta,\gamma) = (ac-b^2)\beta^2 + c\beta\gamma$ and $g(\beta,\gamma) = \beta^2(2b\gamma-c\beta) - \gamma^2(a\beta+\gamma)$. 
There is at most one point $\pt{(\beta_0,\gamma_0)}$ of $\PG(1,q)$ such that $f(\beta_0,\gamma_0)\ne 0$, because any such point corresponds to a point in $\pi$ of rank at most $2$, namely the point parameterised by $(\alpha_0,\beta_0,\gamma_0)$ with $\alpha_0=-{g(\beta_0,\gamma_0)}/{f(\beta_0,\gamma_0)}$. 
Since $f$ defines a quadric on $\PG(1,q)$, this implies that $f(\beta,\gamma)$ must be identically zero. 
Therefore, $b=c=0$.
We claim that $a=0$ also. 
If not, then the points in $\pi$ of rank $2$ are those parameterised by $(\alpha,\beta,\gamma)$ with $\gamma^2(a\beta+\gamma)=0$, contradicting the fact that the points of rank at most $2$ of lie on a line. 
Hence, $a=0$. 
This yields the following representative of the final $K$-orbit, $\Sigma_{15}$: 
\[
\Sigma_{15} : \left[ \begin{matrix} \alpha& \beta & \gamma \\ \beta & \gamma & \cdot \\ \gamma & \cdot & \cdot \end{matrix} \right].
\]

\begin{lemma}\label{lem:Sigma_?}
The $K$-orbit $\Sigma_{15}$ is distinct from all of $\Sigma_1, \dots, \Sigma_{14}$, and from $\Sigma_{14}'$ when $q \equiv 0 \pmod 3$. 
A plane in $\Sigma_{15}$ has point-orbit distribution $[1,q,0,q^2]$.
\end{lemma}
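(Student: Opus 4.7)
The plan is to compute the point-orbit distribution of a plane $\pi$ in $\Sigma_{15}$ directly from the representative, and then separate $\Sigma_{15}$ from the (few) previously constructed orbits with matching distribution by exhibiting a line-type invariant.

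First I would expand the determinant of the $\Sigma_{15}$ representative along its bottom row, obtaining $-\gamma^3$. Hence the cubic of points of rank at most $2$ in $\pi$ is the triple line $\gamma = 0$. Restricting the representative to this line leaves a matrix whose only non-zero entries are $(1,1) = \alpha$ and $(1,2) = (2,1) = \beta$, whose unique non-vanishing $2\times 2$ minor is $-\beta^2$. Thus the $q+1$ projective points on $\gamma = 0$ split as one rank-$1$ point (for $\beta = 0$, $\alpha \neq 0$, namely $x = (1,0,0,0,0,0)$) and $q$ rank-$2$ points, while the remaining $q^2$ points of $\pi$ have rank $3$. Applying Lemma~\ref{lem:extcriteria} to each rank-$2$ point $y$ on the triple line gives $-|M_{11}(M_y)| = -|M_{22}(M_y)| = 0$ and $-|M_{33}(M_y)| = \beta^2$, a non-zero square, so $y \in \cP_{2,e}$. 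This yields the point-orbit distribution $[1, q, 0, q^2]$.

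For the discrimination step, Table~\ref{table:pt-orbit-dist} shows that the only previously constructed orbits with this distribution are $\Sigma_{11}$ and, in characteristic $3$, $\Sigma_{14}'$. A convenient invariant is the presence of a line of type $o_6$: by Table~\ref{table:line-rank-dist} the line $\gamma = 0$ in $\Sigma_{15}$ has rank distribution $[1, q, 0]$ and so is of type $o_6$. In $\Sigma_{11}$ the cubic of rank-$\le 2$ points, as quoted in the proof of Lemma~\ref{lem:Sigma_11}, is an irreducible cuspidal cubic; each line through the unique rank-$1$ point meets it in at most one further point, ruling out an $o_6$ line. In $\Sigma_{14}'$ a direct computation gives the cubic $(\alpha + \gamma)(\beta - \gamma)^2 = \gamma^3$, which under the substitution $u = \alpha + \gamma$, $v = \beta - \gamma$, $w = \gamma$ becomes the standard cuspidal cubic $uv^2 = w^3$, again irreducible; an explicit check confirms that the unique rank-$1$ point of $\Sigma_{14}'$ is not collinear with $q$ of its rank-$2$ points, so $\Sigma_{14}'$ also contains no $o_6$ line.

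The main obstacle is the distinction from $\Sigma_{14}'$ in characteristic $3$, where both orbits share the same point-orbit distribution and a cubic of rank-$\le 2$ points of degree $3$; the decisive invariant is the triple-line structure of the $\Sigma_{15}$ cubic versus the irreducibility of the $\Sigma_{14}'$ cubic, cleanly captured by the presence of an $o_6$ line in $\Sigma_{15}$ and its absence in $\Sigma_{14}'$.
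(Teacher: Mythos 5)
Your proposal is correct, and your computation of the point-orbit distribution (determinant $-\gamma^3$, so the rank-$\le 2$ locus is the triple line $\gamma=0$, with Lemma~\ref{lem:extcriteria} giving $-|M_{33}(M_y)|=\beta^2$ and hence all rank-$2$ points exterior) is essentially the paper's. Where you genuinely diverge is the distinctness step. The paper disposes of it in one line: by construction, every plane in $\Sigma_1,\dots,\Sigma_{14},\Sigma_{14}'$ is spanned by its points of rank at most $2$, whereas a $\Sigma_{15}$ plane has all such points on a single line, and ``spanned by points of rank $\le 2$'' is a $K$-invariant property. You instead argue by invariants: the point-orbit distribution table leaves only $\Sigma_{11}$ and $\Sigma_{14}'$ as candidates, and you separate these using the presence of an $o_6$ line (equivalently, a line contained in the cubic of rank-$\le 2$ points, which by B\'ezout forces that line to be a component --- impossible for the irreducible cuspidal cubics $\alpha(\beta-\gamma)^2=\beta^2\gamma$ and $uv^2=w^3$ of $\Sigma_{11}$ and $\Sigma_{14}'$). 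Both routes are valid. The paper's is shorter because it exploits how the case analysis was organised; yours is more self-contained and gives a checkable invariant that distinguishes the three orbits sharing the distribution $[1,q,0,q^2]$ without reference to how their representatives were derived, at the cost of some extra verification (your phrase ``an explicit check confirms'' for $\Sigma_{14}'$ is the one place you should spell out the argument --- though in fact the irreducibility of $uv^2=w^3$ that you already establish is all that is needed, since an $o_6$ line has all $q+1\ge 4$ of its points of rank $\le 2$ and would therefore be a linear component of the cubic).
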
 

\begin{proof}
The points of rank at most $2$ in the above representative of $\Sigma_{15}$ are all on the line $\langle x,y\rangle$ (namely $\gamma=0$), which has type $o_6$ (see Table~\ref{table:lines}). 
Moreover, the points of rank $2$ all lie in $\cP_{2,e}$ (by Lemma~\ref{lem:extcriteria}). 
Hence, this plane has point-orbit distribution $[1,q,0,q^2]$. 
As explained at the beginning of this section, the plane is not in any of the previously considered $K$-orbits (because it contains a single point of rank~$1$ and is not spanned by points of rank at most $2$).
\end{proof}

This completes the proof of Theorem~\ref{mainthm}.

\section{Wilson's classification of nets of rank one}\label{sec:Wilson}

As explained in Section~\ref{sec:prelims} (see Proposition~\ref{prop:nets_vs_planes}), Theorem~\ref{mainthm} implies Corollary~\ref{maincor}, namely the classification of nets of conics of rank one in $\PG(2,q)$ (for $q$ odd).
We now compare the latter classification with that of Wilson~\cite{Wilson1914}, and explain why Wilson's classification was incomplete. 

In Part~I of his paper~\cite{Wilson1914}, Wilson studied the nets of rank one, namely those containing a repeated line. 
He obtained 11 ``canonical types", labelled $\mathrm{I,II,\ldots, XI}$.
In Part II, he studied the nets of rank two (those not containing a repeated line, but containing a conic which is not absolutely irreducible), obtaining six canonical types $\mathrm{XII,\ldots, XVII}$. 
In Part III, he obtained two canonical types $\mathrm{XVIII}$ and $\mathrm{XIX}$ of nets of rank three.
Wilson was aware of the fact that he had not completely classified the orbits, pointing out that {\em ``All questions of inter-relations between these types have been considered and answered, except with respect to the two cases, nets XVI and XVII, and nets XVIII and XIX."} \cite[p. 207]{Wilson1914}.

\begin{table}[!ht]
\begin{tabular}{c|c|c|c|c|c|c|c|c|c|c|c|c|c|c|c}
$\Sigma_1$ & $\Sigma_2$ & $\Sigma_3$	& $\Sigma_4$ & $\Sigma_5$ & $\Sigma_6$ & $\Sigma_7$ & $\Sigma_8$	& $\Sigma_9$ & $\Sigma_{10}$ & $\Sigma_{11}$ & $\Sigma_{12}$ & $\Sigma_{13}$ & $\Sigma_{14}$ & $\Sigma'_{14}$ & $\Sigma_{15}$ \\
\hline
& $\mathrm{III}$ & $\mathrm{IV}$ & $\mathrm{VI}$ & $\mathrm{XI}$	& $\mathrm{VIII}$ & $\mathrm{I}$ & $\mathrm{V}$ & $\mathrm{VII}$ & $\mathrm{VII}$ & $\mathrm{IX}$ & $\mathrm{XI}$ & $\mathrm{X,XI}$ & $\mathrm{X,XI}$ & & $\mathrm{II}$ \\
\end{tabular}
\caption{Correspondence between $K$-orbits of planes in $\PG(5,q)$ and canonical types $\mathrm{I},\ldots,\mathrm{XI}$ of nets of conics of rank one in $\PG(2,q)$ obtained by Wilson~\cite{Wilson1914}.}
\label{Wilson-table}
\end{table}

Although Wilson's work was in general very thorough, there are also some other issues with his classification. 
Besides the aforementioned open cases, there are some inaccuracies, and some missing orbits.
Table~\ref{Wilson-table} shows the correspondence between the $K$-orbits $\Sigma_1,\ldots, \Sigma_{15}$ of planes in $\PG(5,q)$ obtained in this paper, and Wilson's canonical types $\mathrm{I},\ldots,\mathrm{XI}$ of nets of conics of rank one in $\PG(2,q)$.
As the table illustrates, neither the $K$-orbit $\Sigma_1$ nor the $K$-orbit $\Sigma'_{14}$ (which only arises in characteristic $3$) appear in Wilson's classification. 
Moreover, some of Wilson's canonical types of nets correspond to more than one $K$-orbit. 
Specifically, type $\mathrm{VII}$ is the union of the $K$-orbits $\Sigma_9$ and $\Sigma_{10}$, and type $\mathrm{XI}$ includes the $K$-orbits $\Sigma_5$, $\Sigma_{12}$, and sometimes $\Sigma_{14}$.
The equivalence between the nets of types $\mathrm X$ and $\mathrm{XI}$ is studied on p.~194 of Wilson's paper, and on p.~196 the author concludes that these types are equivalent except when $q$ has the form $36k+17$. 
Taking into account that the $K$-orbit $\Sigma_1$ was overlooked and that the nets of type $\mathrm{VII}$ split into two $K$-orbits, this would imply that for such values of $q$ the number of $K$-orbits of planes corresponding to nets of rank one would be $14$, contradicting our classification (in which there are $15$ orbits for every value of $q$).

\section*{Acknowledgements}

The first author acknowledges the support of {\em The Scientific and Technological Research Council of Turkey} T\"UB\.{I}TAK (project no.~118F159).

\end{document}